\def\titlerunning#1{\gdef\titrun{#1}}
\def\author#1{\gdef\autrun{\def\and{\unskip, }#1}\gdef\@author{#1}}
\def\address#1{{\def\and{\\\hspace*{18pt}}\renewcommand{\thefootnote}{}%
\footnote {#1}}%
\markboth{\autrun}{\titrun}}
\def\email#1{e-mail: #1}
\def\keywords#1{\par\medskip
\noindent\textbf{Keywords.} pattern formation, predator--prey model, prey--taxis, stationary solutions, time periodic solutions, stability analysis}
\newtheorem{theorem}{Theorem}[section]
\newtheorem{corollary}{Corollary}
\newtheorem{lemma}[theorem]{Lemma}
\newtheorem{proposition}{Proposition}[section]
 \numberwithin{equation}{section}
\newtheorem{remark}{Remark}[section]
\numberwithin{equation}{section}
\begin{document}

%%%%% To ease editing, add:

\baselineskip=17pt

%%%%%%%%%%%%%%%%

%% In the running head, give an abbreviation of the title.
\titlerunning{Patterns of systems with prey--taxis}

\title{Stationary and time--periodic patterns of two--predator and one--prey systems with prey--taxis\thanks{Accepted by Discrete Contin. Dyn. Syst-Series A}}

\author{Ke Wang, Qi Wang and Feng Yu}

\date{Department of Mathematics,\\
 Southwestern University of Finance and Economics, \\
 555 Liutai Ave, Wenjiang, Chengdu, Sichuan 611130, China}

\maketitle

\address{K. Wang:
%Department of Mathematics, Southwestern University of Finance and Economics, 555 Liutai Ave, Wenjiang, Chengdu, Sichuan 611130, China;
\email{kkwang@2012.swufe.edu.cn}
\and
Q. Wang:
%Department of Mathematics, Southwestern University of Finance and Economics, 555 Liutai Ave, Wenjiang, Chengdu, Sichuan 611130, China;
\email{qwang@swufe.edu.cn. Corresponding author.  QW is supported by NSF-China (Grant 11501460) and the Project (No.15ZA0382) from Department of Education, Sichuan China}
\and
F. Yu:
%Department of Mathematics, Southwestern University of Finance and Economics, 555 Liutai Ave, Wenjiang, Chengdu, Sichuan 611130, China;
\email{yfeng@2012.swufe.edu.cn. Currently at Department of Mathematics, University of Central Florida, Orlando, USA}}

%\subjclass{Primary XXXX; Secondary YYYY}

%%%%%%%%

\begin{abstract}
This paper concerns pattern formation in a class of reaction--advection--diffusion systems modeling the population dynamics of two predators and one prey.  We consider the biological situation that both predators forage along the population density gradient of the preys which can defend themselves as a group.  We prove the global existence and uniform boundedness of positive classical solutions for the fully parabolic system over a bounded domain with space dimension $N=1,2$ and for the parabolic--parabolic--elliptic system over higher space dimensions.  Linearized stability analysis shows that prey--taxis stabilizes the positive constant equilibrium if there is no group defense while it destabilizes the equilibrium otherwise.  Then we obtain stationary and time--periodic nontrivial solutions of the system that bifurcate from the positive constant equilibrium.  Moreover, the stability of these solutions is also analyzed in detail which provides a wave mode selection mechanism of nontrivial patterns for this strongly coupled system.  Finally, we perform numerical simulations to illustrate and support our theoretical results.

%% Keywords are optional
\keywords{Pattern formation, predator--prey model, prey--taxis, stationary solutions, time--periodic solutions, stability analysis}
\end{abstract}

%\bigskip
%\footnotesize
%\noindent\textit{Acknowledgments.}
%This research was partly supported by NSF (grant no. XXXX).

\section{Introduction}\label{section1}
One of the central problems in the study of ecological systems is to understand the spatial--temporal behaviors of population distributions of interacting species.  Over the past few decades, many mathematical models have been proposed and developed to investigate the collective influence of individual's dispersals on the spatial--temporal population distribution at a group level.  For example, reaction--diffusion equations have been applied to model the densities and spatial distributions of single or multiple interacting species in \cite{Fisher,MK,Murray,OL} etc.  In particular, the formation of nontrivial patterns in these systems such as diffusion--driven heterogeneity or Turing's pattern, especially those with concentration properties, can be used to describe the aggregation or segregation phenomena.

In this paper, we study the following $3\times3$ reaction--advection--diffusion system
\begin{equation}\label{11}
\left\{
\begin{array}{ll}
u_t=\nabla \cdot (d_1\nabla u -\chi u \phi(w) \nabla w)+\alpha_1(1-u)u+\beta_1uw,&x \in \Omega,t>0,\\
v_t=\nabla \cdot (d_2\nabla v -\xi v \phi(w) \nabla w)+\alpha_2(1-v)v+\beta_2vw,&x \in \Omega,t>0,\\
w_t=d_3\Delta w+\alpha_3(1-w)w-\beta_{31}uw-\beta_{32}vw,&   x \in \Omega,t>0,      \\
\partial_\textbf{n} u =\partial_\textbf{n} v =\partial_\textbf{n} w =0,&x\in \partial \Omega,t>0,\\
u(x,0)=u_0(x),v(x,0)=v_0(x),w(x,0)=w_0(x),&x\in\Omega,
\end{array}
\right.
\end{equation}
where $\Omega$ is a bounded domain in $\mathbb R^N$, $N\geq1$ with smooth boundary $\partial \Omega$ and unit outer normal $\textbf{n}$. $\nabla$ is the gradient operator and $\Delta$ is the Laplace operator.  $u$, $v$ and $w$ are functions of space--time variable $(x,t)$.  $d_i$, $\alpha_i$, $i=1,2,3$, and $\beta_i$, $\beta_{3i}$, $i=1,2$, are positive constants. $\chi$ and $\xi$ are also assumed to be positive constants and $\phi(w)$ is a smooth function.

(\ref{11}) models the population dynamics of three interacting species subject to Lotka--Volterra kinetics, where $u$ and $v$ are population densities of two predators at location $x$ and time $t$, and $w$ is the population density of the prey.  It is assumed that both predators consume the same prey species and disperse over the habitat by a combination of random diffusion and directed movement (prey--taxis) along the gradient of prey population density, while the preys move in the habitat randomly.  Diffusion rates $d_i$, $i=1,2,3$, measure the intensity of random dispersals of the species.  Here prey--taxis is the phenomena that predators with the ability to perceive the heterogeneity of prey distribution approach the patches with high preys density.  The positive constants $\chi$ and $\xi$ measure the intensity of the directed movement of each predator in response to the prey--taxis.  The prey--dependent function $\phi$ reflects the strength of prey--tactic movement of the predators with respect to the variation of prey population density.  Various specific forms of $\phi$ can be chosen, depending on the biological situation that one tries to model.  In particular, $\chi u \phi(w)>0$ represents the biological situation that predator $u$ forage the preys while $\chi u \phi(w)<0$ means that predator $u$ retreat the habitat of the preys which can defend themselves as a group when the population density is high.  See \cite{Hamilton,GMJW,XR} and the references therein for more examples and further discussions about antipredation of preys through group defense.  The population kinetics are of classical Lotka--Volterra type, where $\alpha_i$ are the intrinsic growth rates of the species which reproduce logistically, $\beta_i$ are the growth rates of the predators and $\beta_{3i}$ are the death rates of the preys due to predation.

In search of preys, prey--taxis is the process that predators move preferentially towards patches with high density of prey.  All predators forage preys in accordance with spatial distributions of prey density.  Though individual predator tends to forage the vicinity of recent captures, swarms of predators exhibit prey--taxis behaviors.  This uneven searching effort may result in both larger predation rate and aggregation of predators where preys are abundant.  For example, insects can find preys which are concentrated in some areas and sparse in others, though they lack long--distance sensory perception.  See \cite{KO} and the references therein for detailed discussions and field experiments.  This mechanism is the same as bacterial chemotaxis in which cellular organisms sense and move along the concentration gradient of stimulating chemicals in the environment \cite{HP,Ho,Ho3,KS}.

%\cite{CM,Curio,Hassell,HM1,HM2} etc. for more discussions and \cite{HL,Murdoch} for field experiments.
Various reaction--diffusion systems have been proposed to describe spatial preda\-tor--prey distributions under directed movements.  In \cite{KO}, Kareiva and Odell proposed a mechanistic approach, formulated as partial differential equations with spatially varying dispersals and advection, to demonstrate and explain that area--restricted search does create predator aggregation.  Since then, various reaction--diffusion systems have been proposed to model predator--prey dynamics with prey--taxis.  Sapoukhina \emph{et al.}  \cite{STA} assumed that such directed movement is determined by the velocity variation (i.e. the acceleration).  They investigated the effect of prey--taxis on the predator's ability to maintain pest population below a certain economic threshold value.  It is assumed in \cite{Cz,Gr,Tu} etc. that the directed movement of predator is due to the advective velocity.  We refer to \cite{HP,KS,Pt,WGY} etc. for the derivation or justification of (\ref{11}).

Reaction--diffusion models with prey--taxis subject to different population dynamics have been studied by various authors extensively.  Ainseba \emph{et al.} \cite{ABN} established the existence and uniqueness of weak solutions to prey--taxis models with volume--filling effect in prey--tactic sensitivity function.  Global existence and boundedness of classical solutions are obtained in \cite{HZ,Tao}, while nonconstant positive steady states are investigated in \cite{WSS,WYZ,WWZ} and \cite{LWS} using Crandall--Rabinowitz bifurcation theory and fixed point theory, respectively.  Lee \emph{et al.} \cite{LHL2} studied pattern formation in prey--taxis system under a variety of nonlinear functional responses. Travelling wave solutions for prey--taxis models are studied by the same trio in \cite{LHL1}.  Effects of prey--taxis on predator--prey models are investigated numerically in \cite{CSLR} which suggest that both response functions and initial data play important roles in pattern formation; moreover, predator--prey models admit chaotic patterns when the prey--taxis coefficient is sufficiently large.  We also want to mention that for predator--prey systems without prey--taxis, Okubo and Levin \cite{OL} pointed out that an Allee effect in the functional response and a density--dependent death rate of the predator are necessary for the pattern formation.  See the book of Murray \cite{Murray} for more works on prey--taxis models.  We also want to mention that there are also many works on the predator--prey models with density--dependent diffusion or the so--called cross--diffusions, such as \cite{Kuto, KutoY, NY, RA, ZKS} etc.

All the aforementioned works are devoted to studying prey--taxis models with one predator and one prey.  There are some works on two--predator and one--prey model without prey--taxis \cite{HDe,LKEF,PM,TH}.  Moreover, we refer the reader to \cite{JWZ,SKT,WGY} etc. for the studies on the dynamics of competitive reaction--diffusion systems with cross--diffusion or advection.  In \cite{LWZY}, Lin \emph{et al.} considered (\ref{11}) with $\chi=\xi=0$ over $\Omega=(0,L)$ with or without diffusion.  They investigated the global dynamics of all equilibria of the ODE system and obtained traveling wave solutions of the PDE system.  In this paper, we consider system (\ref{11}) with two predators and one prey, both predators foraging the preys prey--tactically.  We are motivated to study the effect of prey--taxis and group defense on the spatial--temporal dynamics of (\ref{11}), in particular, its positive solutions that exhibit stationary or time--oscillating spatial structures.

There are several scientific goals of our paper and the rest part of this paper is organized as follows.  In Section \ref{section2}, global existence of positive classical solutions are obtained for the full parabolic system in Theorem \ref{theorem26} over 1D and 2D bounded domains and for the parabolic--parabolic--elliptic system over arbitrary space dimensions in Theorem \ref{theorem27} respectively.  We also prove that these classical solutions are uniformly bounded in time.  In Section \ref{section3}, we study the linearized stability of the positive equilibrium to (\ref{11}).  It is shown that prey--taxis destabilizes the positive equilibrium if there is group defense in preys and it stabilizes the equilibrium otherwise.  Section \ref{section4} and Section \ref{section5} are devoted to the steady state and Hopf bifurcation analysis of (\ref{11}) over $\Omega=(0,L)$.  Existence and stability of stationary and time--periodic spatial patterns are established in Theorem \ref{theorem42}, Theorem \ref{theorem43} and Theorem \ref{theorem52}.  Though there have been many works devoted to the study of nonconstant steady states of the 2$\times$2 prey--taxis systems, regular time--periodic spatial patterns are quite new to the literature.  Finally, we perform extensive numerical studies in Section \ref{section6} to illustrate and support our theoretical findings.  We would like to note that, $C$ and $C_i$ are assumed to be positive constants that may vary from line to line in the sequel.

\section{Existence and boundedness of global solutions}\label{section2}
In this section, we study the global existence and boundedness of classical positive solutions to (\ref{11}).  There are two well--established methods in proving the global boundedness for reaction--diffusion systems in the literature.  One is to construct its time--monotone Lyapunov--functional and the other is to use Gagliardo--Nirenberg type estimate through Moser--$L^p$ iteration.  As we shall see in our mathematical analysis and numerical simulations, (\ref{11}) admits time--periodic spatially inhomogeneous solutions for properly chosen parameters and hence lacks time--monotone Lyapunov--functional and maximum principle.  Our proof of global existence and boundedness of classical positive solutions to (\ref{11}) is based on the local theory of Amann \cite{Am1,Am2} and the Moser--Alikakos $L^p$ iteration technique \cite{A0}.

\subsection{Local existence and preliminary results}
We first obtain the local existence and uniqueness of positive classical solutions to (\ref{11}) and their extensibility criterion based on Amann's theory \cite{Am2}.  To this end, we convert it into the following triangular form
\begin{equation}\label{21}
\left(
\begin{array}{c}u\\
v\\
w
\end{array}
\right)_t=
\nabla \cdot \left[ \mathcal{D}_0 \nabla \left(
\begin{array}{c}u\\
v\\
w
\end{array}
\right) \right]
+\left(
\begin{array}{c}
\alpha_1(1-u)u+\beta_1uw\\
\alpha_2(1-v)v+\beta_2vw\\
\alpha_3(1-w)w-\beta_{31}uw-\beta_{32}vw
\end{array}
\right),
\end{equation}
with
\begin{equation*}\mathcal{D}_0 =\begin{pmatrix}
 d_1  &  0& -\chi u \phi(w)  \\
  0    & d_2& -\xi v \phi(w)\\
  0       & 0    &  d_3
  \end{pmatrix},
  \end{equation*}
subject to nonnegative initial data $u_0,v_0,w_0$ and homogeneous Neumann boundary conditions.  Since all the eigenvalues of $\mathcal{D}_0$ are positive, system (\ref{21}) is normally parabolic, and we have from the standard parabolic maximum principles that $u,v,w\geq0$ in $\Omega\times\mathbb R^+$.  Moreover, the following results are evident from Theorem 7.3 and Theorem 9.3 of \cite{A}, Theorem 5.2 in \cite{Am2} and the standard parabolic regularity arguments.
\begin{theorem}\label{theorem21}
Let $\Omega$ be a bounded domain in $\mathbb R^N$, $N\geq1$ with smooth boundary $\partial \Omega$.  Assume that $d_i$, $\alpha_i$ for $i=1,2,3$, and $\beta_i$, $\beta_{3i}$ for $i=1,2$ are positive constants.  Suppose that the initial data $(u_0, v_0, w_0)\in  C^1(\bar \Omega)\times  C^1(\bar \Omega)\times W^{2,p}$ for some $p>N$, and $u_0,v_0,w_0\geq0$, $\not \equiv 0$ in $\Omega$.  Then there exist a constant $T_{\max}\in(0,\infty]$ and a unique solution $(u(x,t),v(x,t),w(x,t))$ to (\ref{11}) which is nonnegative on $\bar \Omega\times [0,T_{\max})$ such that $(u(\cdot,t),v(\cdot,t),w(\cdot,t)) \in C([0,T_{\max}),H^1(\Omega)\times H^1(\Omega)\times H^1(\Omega))$ and $(u,v,w)\in C^{2+\alpha,1+\frac{\alpha}{2}}_{loc}(\bar \Omega \times (0,T_{\max}))^3$ for any $0<\alpha<\frac{1}{4}$; moreover, if $\sup_{s\in(0,t)}\Vert (u,v,w)(\cdot,s) \Vert_{L^\infty}$ is bounded for $t\in(0,T_{\max}]$, then $T_{\max}=\infty$, i.e., $(u,v,w)$ is a global solution to (\ref{11}).  Furthermore, $(u,v,w)$ is a classical solution and $(u,v,w) \in C^\alpha((0,\infty),C^{2(1-\beta)}(\bar \Omega)\times C^{2(1-\beta)}(\bar \Omega)\times C^{2(1-\beta)}(\bar \Omega))$ for any $0\leq \alpha \leq \beta \leq 1$.
\end{theorem}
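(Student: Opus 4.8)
The plan is to recast (\ref{21}) as an abstract quasilinear parabolic initial--boundary value problem of the form $\partial_t \mathbf{U} = \nabla\cdot(\mathcal{D}_0(\mathbf{U})\nabla \mathbf{U}) + \mathbf{F}(\mathbf{U})$, with $\mathbf{U}=(u,v,w)$, subject to the homogeneous Neumann conditions, and to invoke Amann's local existence and continuation theory \cite{Am1,Am2,A}. The single structural observation that drives everything is that the prey--tactic cross--diffusion appears only in the top--right off--diagonal entries, so $\mathcal{D}_0$ is upper triangular; its spectrum is therefore $\{d_1,d_2,d_3\}\subset(0,\infty)$ \emph{independently} of the values of $u,v,w$ and of $\phi(w)$. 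Consequently the system is normally elliptic uniformly in $\mathbf{U}$, which is exactly the hypothesis needed to place (\ref{21}) inside Amann's framework.

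First I would fix a base space $L^p$ with $p>N$ and the associated interpolation/extrapolation scale, choosing it so that the prescribed initial regularity $(u_0,v_0,w_0)\in C^1(\bar\Omega)\times C^1(\bar\Omega)\times W^{2,p}$ lands in the correct trace space for the Neumann problem. With normal ellipticity in hand and the homogeneous Neumann operator satisfying the Lopatinskii--Shapiro (complementing) condition automatically, Theorem 5.2 of \cite{Am2} together with Theorems 7.3 and 9.3 of \cite{A} yield a unique maximal classical solution on $\bar\Omega\times[0,T_{\max})$ enjoying the stated continuity in $H^1$ and the interior H\"older regularity $C^{2+\alpha,1+\alpha/2}_{loc}$. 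The same theory supplies the continuation criterion: if $T_{\max}<\infty$, the solution must leave every compact subset of the state space, which -- after the regularity bootstrap below -- is equivalent to the blow--up of $\sup_{s\in(0,t)}\Vert(u,v,w)(\cdot,s)\Vert_{L^\infty}$. Thus an $L^\infty$ a priori bound forces $T_{\max}=\infty$.

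For nonnegativity I would exploit the quasi--positive structure of the kinetics: each reaction term factors through its own component, $\alpha_1(1-u)u+\beta_1uw = u\big(\alpha_1(1-u)+\beta_1 w\big)$ and likewise for $v$ and $w$, so the $i$--th reaction is nonnegative whenever the $i$--th variable vanishes and the others are nonnegative. Freezing $w$, the equations for $u$ and $v$ are scalar linear parabolic equations with drift $-\chi\phi(w)\nabla w$ (respectively $-\xi\phi(w)\nabla w$), and the scalar maximum principle gives $u,v\geq0$; the $w$--equation is handled identically. The strong maximum principle then upgrades this to strict positivity for $t>0$. Finally, once a local $L^\infty$ bound is available, treating each scalar equation separately and applying parabolic $L^p$ theory followed by Schauder estimates bootstraps the solution to the claimed $C^{2+\alpha,1+\alpha/2}_{loc}$ and $C^\alpha((0,\infty),C^{2(1-\beta)})$ regularity.

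The hard part is not existence itself but the bookkeeping needed to apply the abstract theory cleanly: choosing the interpolation scale so that the mismatched initial regularities ($C^1$ for $u,v$ but $W^{2,p}$ for $w$) are simultaneously admissible, verifying that the $\mathbf{U}$--dependence of $\mathcal{D}_0$ through $\phi(w)$ is sufficiently smooth for Amann's quasilinear hypotheses, and translating Amann's abstract ``leaves every compact set'' continuation alternative into the concrete sup--norm criterion stated in the theorem. The uniform normal ellipticity coming from the triangular structure is what makes all of these steps go through without any smallness or sign restriction on $\chi,\xi$ or $\phi$.
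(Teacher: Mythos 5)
Your proposal follows essentially the same route as the paper: both rest on converting (\ref{11}) to the triangular form (\ref{21}), noting that the upper--triangular diffusion matrix $\mathcal{D}_0$ has positive eigenvalues $d_1,d_2,d_3$ so the system is normally parabolic, and then invoking Amann's theory (Theorems 7.3 and 9.3 of \cite{A}, Theorem 5.2 of \cite{Am2}) for local existence, uniqueness, and the $L^\infty$ continuation criterion, with nonnegativity from the maximum principle and the stated regularity from standard parabolic bootstrapping. Your additional bookkeeping remarks (quasi--positivity of the kinetics, the interpolation--scale setup, translating the abstract continuation alternative into the sup--norm criterion) are correct elaborations of steps the paper leaves implicit, not a different argument.
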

According to Theorem \ref{theorem21}, the local solutions $(u,v,w)$ are global if their $L^\infty$--norms are bounded in time.  To establish the global existence, we state some basic properties of the local solutions.
\begin{lemma}\label{lemma22}
Suppose that all the conditions in Theorem \ref{theorem21} hold and $(u_0, v_0, w_0)\in  C^1(\bar \Omega)\times  C^1(\bar \Omega)\times W^{2,p}$ for some $p>N$.  Let $(u,v,w)$ be the classical solution of (\ref{11}) over $\Omega\times (0,T_{\max})$.  Then there exist positive constants $C_1$ and $C_2$ such that
\begin{equation}\label{22}
\Vert u(\cdot,t) \Vert_{L^1(\Omega)}+\Vert v(\cdot,t) \Vert_{L^1(\Omega)} \leq C_1=C(\Vert u_0 \Vert_{L^1},\Vert v_0 \Vert_{L^1},a_1,b_1, \vert \Omega \vert), \forall t\in(0,T_{\max}),
\end{equation}
and
\begin{equation}\label{23}
0\leq w(x,t)\leq C_2= C(\Vert w_0 \Vert_{\infty},\alpha_3),\forall t\in(0,T_{\max});
\end{equation}
moreover, $w(x,t)\in(0,1)$ on $\bar \Omega\times (0,T_{\max})$ if $w_0(x) \in [0,1]$ on $\bar \Omega$.
\end{lemma}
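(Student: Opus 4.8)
The plan is to establish the pointwise bound on $w$ first, and only then to feed it into $L^1$ estimates for $u$ and $v$. This ordering is forced by the structure of the kinetics: the predator equations carry the growth terms $+\beta_1 uw$ and $+\beta_2 vw$, which have a \emph{favorable} sign and therefore cannot be absorbed on their own; they can be controlled only after $w$ has been shown to be bounded. Throughout I would use the nonnegativity $u,v,w\ge 0$ on $\bar\Omega\times(0,T_{\max})$ already noted from the parabolic maximum principle applied to the triangular form \eqref{21}.

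For \eqref{23}, I would discard the nonpositive predation terms in the third equation of \eqref{11} to get the scalar differential inequality
\begin{equation*}
w_t - d_3\Delta w = \alpha_3(1-w)w-\beta_{31}uw-\beta_{32}vw \le \alpha_3(1-w)w \quad\text{on } \Omega\times(0,T_{\max}).
\end{equation*}
Letting $\bar w(t)$ solve the logistic ODE $\bar w'=\alpha_3(1-\bar w)\bar w$ with $\bar w(0)=\|w_0\|_\infty$, the function $\bar w$ is a spatially homogeneous supersolution compatible with the Neumann condition, so the scalar comparison principle yields $w(x,t)\le\bar w(t)\le\max\{1,\|w_0\|_\infty\}$, giving \eqref{23} with $C_2=\max\{1,\|w_0\|_\infty\}$. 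For the refined statement, if $w_0\in[0,1]$ the same comparison gives $w\le 1$; setting $z=1-w$ one computes $z_t-d_3\Delta z+\alpha_3 w\,z=(\beta_{31}u+\beta_{32}v)w\ge 0$ with $z(\cdot,0)\ge 0$, so the strong maximum principle forces $z>0$, i.e. $w<1$, for $t>0$. Likewise, writing the $w$-equation as $w_t-d_3\Delta w=c(x,t)\,w$ with bounded $c$ and using $w_0\not\equiv 0$, the strong maximum principle gives $w>0$ for $t>0$, so $w\in(0,1)$.

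With $0\le w\le C_2$ in hand, I would integrate the first equation of \eqref{11} over $\Omega$; the homogeneous Neumann condition annihilates the divergence term, leaving
\begin{equation*}
\frac{d}{dt}\int_\Omega u\,dx = \alpha_1\int_\Omega(1-u)u\,dx+\beta_1\int_\Omega uw\,dx \le (\alpha_1+\beta_1 C_2)\int_\Omega u\,dx-\alpha_1\int_\Omega u^2\,dx.
\end{equation*}
Writing $m(t)=\int_\Omega u\,dx$ and invoking the Cauchy--Schwarz inequality $m(t)^2\le|\Omega|\int_\Omega u^2\,dx$ to convert the dissipative quadratic into a term quadratic in the mass, I obtain
\begin{equation*}
m'(t)\le (\alpha_1+\beta_1 C_2)\,m(t)-\frac{\alpha_1}{|\Omega|}\,m(t)^2 ,
\end{equation*}
a logistic (Bernoulli) differential inequality whose solutions obey $m(t)\le\max\bigl\{\|u_0\|_{L^1},\,(\alpha_1+\beta_1 C_2)|\Omega|/\alpha_1\bigr\}$ by comparison with the corresponding ODE. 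The identical computation applied to $v$ yields the same type of bound, and summing the two produces \eqref{22} with $C_1$ depending only on the indicated data.

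The only genuine difficulty is conceptual rather than computational: because $\beta_1 uw$ and $\beta_2 vw$ are production terms, no $L^1$ bound can be extracted from the predator equations in isolation. The proof hinges on decoupling the estimate by first pinning down $w$, after which the logistic dissipation $-\alpha_i u^2$, interpolated against the total mass through Cauchy--Schwarz, is exactly what closes the a priori bound.
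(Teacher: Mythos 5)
Your proposal is correct and follows essentially the same route as the paper: comparison of $w$ with the spatially homogeneous logistic supersolution $\bar w(t)$ to get \eqref{23}, followed by integrating the predator equations, bounding $\beta_1\int_\Omega uw$ via $\Vert w\Vert_{L^\infty}$, and closing the mass estimate through $\bigl(\int_\Omega u\bigr)^2\le|\Omega|\int_\Omega u^2$ and a Bernoulli-type ODE comparison. Your treatment of the strict bounds $w\in(0,1)$ via the strong maximum principle applied to $z=1-w$ is in fact slightly more detailed than the paper's remark that $\hat w\equiv 1$ is a supersolution, but it is a refinement of, not a departure from, the same argument.
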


\begin{proof}
We first prove (\ref{23}).  Let $\bar{w}(t)$ be the solution of
\begin{equation}\label{24}
\frac{d \bar{w}(t)}{dt}=\alpha_3(1- \bar{w}(t))\bar{w}(t),\bar{w}(0)=\max_{\bar\Omega} w_0(x),
\end{equation}
then it is easy to see that $\bar{w}(t)$ is uniformly bounded and it is a super-solution to the third equation of (\ref{11}).  Hence we have that $w(x,t)\leq \bar{w}(t)$ from the maximum principle and this gives rise to (\ref{23}).  Moreover, if $w_0(x)\in(0,1)$, $\hat w(t)\equiv1$ is a super--solution and this implies that $w(x,t)\in(0,1)$.

We next prove the boundedness of $\Vert u(\cdot,t)\Vert_{L^1}$ and the same argument applies for $\Vert v(\cdot,t)\Vert_{L^1}$.  Integrating the first equation in (\ref{11}) over $\Omega$, we have from the boundedness of $\Vert w(\cdot,t)\Vert_{L^\infty}$ that
\[\frac{d}{dt} \int_\Omega u(x,t) \leq \Big(\alpha_1+\beta_1\Vert w(\cdot,t)\Vert_{L^\infty}\Big) \int_\Omega u(x,t)-\alpha_1 \int_\Omega u^2(x,t),\]
here and in the rest of this section we skipped $dx$.  In light of $(\int_\Omega u(x,t) )^2\leq |\Omega|\int_{\Omega}u^2(x,t)$, we have
\begin{align*}
  \frac{d}{dt}\int_\Omega u(x,t) \leq (\alpha_1+\beta_1 \Vert w(\cdot,t)\Vert_{L^\infty})\int_\Omega u(x,t) -\frac{\alpha_1}{|\Omega|}\Big(\int_\Omega u(x,t) \Big)^2.
\end{align*}
Solving this ordinary differential inequality gives
\begin{align*}
\int_\Omega u(x,t) \leq \max{ \Big\{ \int_{\Omega}u_0(x), |\Omega|\Big( 1+\frac{\beta_1}{\alpha_1} \sup_{0\leq t<T_{\max{}}}\Vert w(\cdot,t)\Vert_{L^\infty} \Big)      \Big\} }.
\end{align*}
\end{proof}
In order to estimate $L^p$--norms of $u$ and $v$, we need to provide an \emph{a priori} estimate on $\Vert \nabla w\Vert_{L^\infty}$.  The following lemma is due to the well--known smoothing properties of operator $-d_3\Delta+1$ and embeddings between the analytic semigroups generated by $\{e^{t\Delta}\}_{t>0}$.  We refer the reader to \cite{HW,Winkler} for references.
\begin{lemma}\label{lemma23}
Suppose that $\Omega$ is a bounded domain in $\mathbb R^N$, $N\geq1$.  Assume that all the conditions in Theorem \ref{theorem21} hold and let $(u,v,w)$ be the classical solution of (\ref{11}) over $(0,T_{\max})$.  Then there exists a constant $C$ dependent on $\Vert w_0 \Vert _{W^{1,q}(\Omega)}$ and $\vert \Omega\vert$ such that for $p\in(1,\infty)$
\begin{equation}\label{25}
\Vert w(\cdot,t) \Vert_{W^{1,q}} \le C\Big(1+\sup_{s\in(0,t)} \big(\Vert u(\cdot,s)\Vert_{L^p}+ \Vert v(\cdot,s)\Vert_{L^p}\big)\Big), \forall t\in (0,T_{\max}),
\end{equation}
for any $q\in(1,\frac{Np}{N-p})$ if $p\in [1,N)$, $q\in (1,\infty)$ if $p=N$ and $q=\infty$ if $p>N$.
\end{lemma}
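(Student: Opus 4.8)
The plan is to treat the third equation of \eqref{11} as a scalar linear parabolic equation for $w$ whose forcing term is controlled by quantities we have already estimated, and then to read off the $W^{1,q}$-regularity from the smoothing properties of the Neumann heat semigroup. Let $\mathcal A := -d_3\Delta + 1$ with homogeneous Neumann boundary conditions; this is a sectorial operator generating the analytic, exponentially decaying semigroup $(e^{-t\mathcal A})_{t\ge0} = (e^{-t}e^{td_3\Delta})_{t\ge0}$. I would recast the $w$-equation as
\[
w_t + \mathcal A w = g, \qquad g := w + \alpha_3(1-w)w - \beta_{31}uw - \beta_{32}vw,
\]
and represent the solution by the variation-of-constants formula
\[
w(\cdot,t) = e^{-t\mathcal A}w_0 + \int_0^t e^{-(t-s)\mathcal A} g(\cdot,s)\,ds.
\]

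First I would bound the forcing term. By \eqref{23} we have $0 \le w \le C_2$ on $\bar\Omega \times (0,T_{\max})$, so the logistic part $w + \alpha_3(1-w)w$ is bounded and $|g| \le C(1 + u + v)$ pointwise, with $C$ depending only on $C_2$ and the parameters. Hence $\|g(\cdot,s)\|_{L^p} \le C\big(1 + \|u(\cdot,s)\|_{L^p} + \|v(\cdot,s)\|_{L^p}\big)$ for every $s\in(0,t)$; writing $M := \sup_{s\in(0,t)}\big(\|u(\cdot,s)\|_{L^p} + \|v(\cdot,s)\|_{L^p}\big)$, the source is uniformly controlled by $C(1+M)$.

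Next I would differentiate the representation formula and invoke the standard $L^p$--$L^q$ gradient estimates for the Neumann heat semigroup (see \cite{HW,Winkler}): there exist $C,\lambda>0$ with $\|\nabla e^{-t\mathcal A}z\|_{L^q} \le C\|z\|_{W^{1,q}}$ and, for $1 < p \le q$,
\[
\|\nabla e^{-t\mathcal A} z\|_{L^q} \le C\big(1 + t^{-\frac12 - \frac N2(\frac1p - \frac1q)}\big)e^{-\lambda t}\|z\|_{L^p}.
\]
Applying the first estimate to the initial-data term gives $\|\nabla e^{-t\mathcal A}w_0\|_{L^q} \le C\|w_0\|_{W^{1,q}}$, and applying the second under the integral yields
\[
\|\nabla w(\cdot,t)\|_{L^q} \le C\|w_0\|_{W^{1,q}} + C(1+M)\int_0^t \big(1 + (t-s)^{-\frac12 - \frac N2(\frac1p-\frac1q)}\big)e^{-\lambda(t-s)}\,ds.
\]

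The crux of the argument --- and the only genuinely delicate point --- is the convergence of this time integral, which is exactly what pins down the admissible range of $q$. The integrand is singular at $s=t$ like $(t-s)^{-\frac12 - \frac N2(\frac1p - \frac1q)}$, so the integral is finite precisely when this exponent exceeds $-1$, i.e. $\frac12 + \frac N2(\frac1p - \frac1q) < 1$, equivalently $\frac1p - \frac1q < \frac1N$. This is the Sobolev threshold: it gives $q < \frac{Np}{N-p}$ when $p\in(1,N)$, any $q\in(1,\infty)$ when $p=N$, and --- after replacing the $L^q$ estimate with its $L^\infty$ analogue, valid once $\frac12 + \frac{N}{2p} < 1$ --- the choice $q=\infty$ when $p>N$. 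The exponential factor $e^{-\lambda(t-s)}$ then renders the integral bounded uniformly in $t\in(0,T_{\max})$ by a finite constant depending only on $N$, $p$, $q$ and $\Omega$. Combining this with the trivial bound $\|w\|_{L^q}\le C_2|\Omega|^{1/q}$ from \eqref{23} and absorbing constants produces \eqref{25}, with the asserted dependence on $\|w_0\|_{W^{1,q}(\Omega)}$ and $|\Omega|$.
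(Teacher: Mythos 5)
Your proposal is correct and follows essentially the same route as the paper's proof: both recast the $w$--equation via the variation--of--constants formula for the semigroup generated by $d_3\Delta-1$, control the forcing term through the uniform bound (\ref{23}), and invoke the $L^p$--$L^q$ smoothing estimates of the Neumann heat semigroup (Lemma 1.3 of \cite{Winkler}), with the admissible range of $q$ determined by exactly the same integrability condition $\frac{1}{2}+\frac{N}{2}\big(\frac{1}{p}-\frac{1}{q}\big)<1$. The only cosmetic difference is that you estimate $\nabla w$ and the $L^q$ norm of $w$ separately, whereas the paper applies the $W^{1,q}$ smoothing estimate in one stroke.
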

\begin{proof}
We first write $w$--equation into the following variation--of--constants formula
\begin{equation}\label{26}
w(\cdot,t)=e^{ (d_3\Delta-1)t}w_0+\int_0^t e^{(d_3\Delta-1)(t-s)} h(u(\cdot,s),v(\cdot,s),w(\cdot,s)) ds,
\end{equation}
where $h(u,v,w)=w+\alpha_3(1-w)w-\beta_{31}uw-\beta_{32}vw$.  Applying Lemma 1.3 in \cite{Winkler} on (\ref{26}) and using (\ref{23}), we see that there exists a positive constant $C$ such that for $1\leq p, q \leq \infty$,
\begin{equation}\label{27}
 \Vert w(\cdot,t) \Vert _{W^{1,q}} \!\leq \!C\!\left(1\! + \!\int_0^t\!e^{-\nu(t-s)} (t-s)^{-\frac{1}{2}-\frac{N}{2}(\frac{1}{p}-\frac{1}{q})} \big(\Vert u(\cdot,s)\Vert_{L^p}+\Vert v(\cdot,s) \Vert_{L^p}\big)ds\right),
\end{equation}
where $\nu$ is the first Neumann eigenvalue of $-d_3\Delta+1$.  On the other hand, we know from the gamma function that
\[ \sup_{t\in(0,\infty)}\int_0^t e^{-\nu(t-s)} (t-s)^{-\frac{1}{2}-\frac{N}{2}(\frac{1}{p}-\frac{1}{q})}  ds<\infty, \text{ since } -\frac{1}{2}-\frac{N}{2}(\frac{1}{p}-\frac{1}{q})>-1,\]
therefore (\ref{25}) follows from (\ref{27}).
\end{proof}
In order to apply the Moser--iteration, it is sufficient to prove the boundedness of $\Vert \nabla w(\cdot,t) \Vert _{L^{2(N+1)}}$ thanks to the quadratic decay kinetics in (\ref{11}).
\begin{proposition}\label{proposition21}
Let $\Omega$ be a bounded domain in $\mathbb R^N$, $N\ge1$, and $(u,v,w)$ be the classical solutions of (\ref{11}) over $(0,T_{\max})$.  If there exists a constant $C_1$ such that
\[\Vert \nabla w(\cdot,t) \Vert _{L^{2(N+1)}}\le C_1, \forall t \in(0,T_{\max}),\]
then there exists a constant $C_2$ such that
\[\Vert u(\cdot,t) \Vert _{L^\infty}+\Vert v(\cdot,t) \Vert _{L^\infty}\le C_2, \forall t \in(0,T_{\max}).\]
Therefore $(u,v,w)$ is a global solution to (\ref{11}) over $\Omega\times (0,\infty)$.
\end{proposition}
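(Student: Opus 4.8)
The plan is to run a Moser--Alikakos $L^p$ iteration on $u$ (and symmetrically on $v$), using the hypothesized bound $\Vert \nabla w(\cdot,t)\Vert_{L^{2(N+1)}}\le C_1$ as the only external input, together with the elementary estimates of Lemma \ref{lemma22}. Since $0\le w\le C_2$ by (\ref{23}) and $\phi$ is smooth, $\phi(w)$ is bounded on $\bar\Omega\times(0,T_{\max})$; write $M:=\sup_{[0,C_2]}|\phi|$.

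First I would fix $p\ge 2$, multiply the $u$--equation by $u^{p-1}$ and integrate by parts over $\Omega$. The diffusion term yields the dissipation $-\frac{4(p-1)d_1}{p^2}\Vert\nabla u^{p/2}\Vert_{L^2}^2$; the prey--taxis term becomes $\frac{2(p-1)\chi}{p}\int_\Omega \phi(w)\,u^{p/2}\nabla u^{p/2}\cdot\nabla w$, which by Young's inequality I bound by $\epsilon\Vert\nabla u^{p/2}\Vert_{L^2}^2 + C_\epsilon\int_\Omega u^p|\nabla w|^2$ (here only $M$ enters, not $\phi'$, because I do \emph{not} integrate by parts a second time); and the Lotka--Volterra kinetics contribute $(\alpha_1+\beta_1 C_2)\int_\Omega u^p-\alpha_1\int_\Omega u^{p+1}$, so the superlinear decay term $-\alpha_1\int_\Omega u^{p+1}$ sits on the favorable side. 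Choosing $\epsilon$ small relative to $d_1$ gives
\[\frac{1}{p}\frac{d}{dt}\int_\Omega u^p + \frac{2(p-1)d_1}{p^2}\Vert\nabla u^{p/2}\Vert_{L^2}^2 + \alpha_1\int_\Omega u^{p+1} \le C_\epsilon\int_\Omega u^p|\nabla w|^2 + (\alpha_1+\beta_1 C_2)\int_\Omega u^p.\]

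The heart of the argument is to dispose of $\int_\Omega u^p|\nabla w|^2$. H\"older's inequality with conjugate exponents $N+1$ and $\frac{N+1}{N}$ together with the hypothesis gives $\int_\Omega u^p|\nabla w|^2\le C_1^2\,\Vert u^{p/2}\Vert_{L^{2(N+1)/N}}^2$; this is precisely why the exponent $2(N+1)$ is the right one, since it places $|\nabla w|^2$ in $L^{N+1}$. I then invoke Gagliardo--Nirenberg on $\psi=u^{p/2}$ with target exponent $m=\frac{2(N+1)}{N}$ and reference norm $\Vert\psi\Vert_{L^1}=\int_\Omega u^{p/2}$; the interpolation exponent works out to $a=\frac{N}{N+1}$, so that $2a=\frac{2N}{N+1}<2$. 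This subcriticality is decisive: Young's inequality then absorbs $\Vert\nabla u^{p/2}\Vert_{L^2}^{2a}$ into the diffusion dissipation, leaving only powers of the lower--order quantity $\int_\Omega u^{p/2}$. Treating the remaining $\int_\Omega u^p$ by a second Gagliardo--Nirenberg interpolation (with $m=2$, again subcritical), I arrive at a differential inequality of the form $\frac{d}{dt}\int_\Omega u^p + \int_\Omega u^p \le C(p)\big((\int_\Omega u^{p/2})^{\gamma}+1\big)$.

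Finally I would set $p=p_k=2^k$ and $y_k(t)=\int_\Omega u^{p_k}$, so the last inequality reads $y_k'+\lambda_k y_k\le \Theta_k$ with $\Theta_k$ controlled by $\sup_t y_{k-1}$. Solving by comparison yields the recursion $\sup_t y_k\le \max\{y_k(0),\,C(p_k)(\sup_t y_{k-1})^{\gamma}+\cdots\}$, anchored at $k=1$ by the uniform $L^1$ bound $\sup_t\Vert u\Vert_{L^1}\le C$ of Lemma \ref{lemma22} and by $y_k(0)=\int_\Omega u_0^{p_k}$, which is finite since $u_0\in C^1(\bar\Omega)$. Taking $(\cdot)^{1/p_k}$ and letting $k\to\infty$ produces a uniform--in--time bound $\Vert u\Vert_{L^\infty}\le C_2$, and the identical argument bounds $\Vert v\Vert_{L^\infty}$; the extensibility criterion of Theorem \ref{theorem21} then forces $T_{\max}=\infty$. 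I expect the main obstacle to be the careful tracking of how the constants $C(p_k)$ grow with $p_k$ through the repeated Gagliardo--Nirenberg and Young steps, since the Moser recursion converges to a finite $L^\infty$ bound only if this growth is at most geometric in $p_k$; the specific choice of the exponent $2(N+1)$ in the hypothesis is exactly what guarantees the subcritical powers $2a<2$ that make the absorption, and hence the whole iteration, go through.
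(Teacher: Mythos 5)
Your proposal is correct, but it takes a genuinely different route from the paper's. You run the Moser--Alikakos iteration \emph{directly} under the hypothesis, handling the taxis contribution at every level $p$ by H\"older with exponents $N+1,\ \tfrac{N+1}{N}$, i.e.\ $\int_\Omega u^p|\nabla w|^2\le C_1^2\Vert u^{p/2}\Vert_{L^{2(N+1)/N}}^2$, followed by Gagliardo--Nirenberg with the subcritical exponent $a=\tfrac{N}{N+1}$ (your exponent computations check out, including $m=\tfrac{2(N+1)}{N}\le\tfrac{2N}{N-2}$ for $N\ge3$, and the recursion with $\gamma=2$ and polynomially growing $C(p_k)$ closes, anchored at the $L^1$ bound of Lemma \ref{lemma22}). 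The paper instead bootstraps: it uses the hypothesis exactly once, at the single level $p=N$, via the three--term Young splitting $u^{p/2}\nabla u^{p/2}\cdot\nabla w\le\epsilon|\nabla u^{p/2}|^2+\epsilon u^{p+1}+C(\epsilon)|\nabla w|^{2(p+1)}$ — absorbing $\epsilon u^{N+1}$ into the logistic decay and $|\nabla w|^{2(N+1)}$ into the hypothesis — to get $\Vert u\Vert_{L^N}+\Vert v\Vert_{L^N}$ bounded; it then feeds this into the semigroup smoothing estimate (Lemma \ref{lemma23}) to upgrade $\nabla w$ to every finite $L^q$, repeats the energy estimate at $p=N+1$, applies Lemma \ref{lemma23} once more with $p>N$ to reach $\Vert\nabla w\Vert_{L^\infty}\le C$, and only then runs the iteration, in the easy regime (\ref{210}) where the forcing is linear in $\int u^p$ and constant--tracking is painless. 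The trade--off: your argument is self--contained in the $u$-- and $v$--equations (no parabolic smoothing of the $w$--equation needed inside the proof) and in fact works under the weaker hypothesis $\nabla w\in L^q$ for any $q>N$ — the exponent $2(N+1)$ is tailored to the paper's $p=N$ Young step rather than forced by your H\"older/GN step — while the paper's bootstrap externalizes the hard analysis to the known smoothing lemma (which it needs elsewhere anyway) and makes the iteration itself essentially trivial, at no extra cost in hypotheses since the prey equation is part of the system. Your one stated worry, geometric growth of $C(p_k)$, is indeed the only delicate point of the direct route, and it is controlled because the GN constant is applied to $\psi=u^{p/2}$ at fixed target exponents, so the $p$--dependence enters only through the dissipation coefficient $\tfrac{2(p-1)d_1}{p^2}$ and yields $C(p_k)\le Cp_k^{\theta}$, which the dyadic recursion tolerates.
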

\begin{proof}
For each $p\geq1$, we have from straightforward calculations
\begin{align}\label{28}
\frac{1}{p}\frac{d}{dt}\int_{\Omega} u^p =&\int_\Omega u^{p-1}\nabla\cdot(d_1\nabla u-\chi u\phi(w)\nabla w)+\int_\Omega\alpha_1(1-u)u^p+\beta_1u^pw \nonumber\\
=&-\frac{4d_1(p-1)}{p^2} \int_{\Omega} \vert\nabla u^{\frac{p}{2}} \vert^2+\frac{2\chi(p-1)}{p} \int_\Omega u^{\frac{p}{2}}\phi(w)\nabla u^{\frac{p}{2}} \cdot\nabla w \nonumber\\
&+\int_\Omega\alpha_1(1-u)u^p+\beta_1u^pw.
\end{align}
To estimate the second integral in (\ref{28}), we have from Young's inequality that, for any constant $\epsilon>0$, there exists $C(\epsilon)>0$ such that
\[u^{\frac{p}{2}}\nabla u^{\frac{p}{2}} \cdot  \nabla w \leq \epsilon \vert\nabla u^{\frac{p}{2}} \vert^2+\epsilon (u^\frac{p}{2})^\frac{2(p+1)}{p}+C(\epsilon) \vert\nabla w\vert^{2(p+1)}.\]
On the other hand, for each $p>0$ there exists $C_0>0$ such that $\alpha_1(1-u)u^p+\beta_1u^pw<-\frac{\alpha_1}{2}u^{p+1}+C_0$.  For the simplicity of notations and without loss of our generality, we assume that $\Vert \phi(w)\Vert_{L^\infty}\leq 1$ in light of (\ref{23}).  Choosing $p=N$ in (\ref{28}), thanks to the boundedness of $\Vert \nabla w(\cdot,t)\Vert_{L^{2(N+1)}}$ we have
\begin{align}\label{29}
&\frac{1}{N}\frac{d}{dt}\int_{\Omega} u^N  \nonumber \\
\le&-\frac{4d_1(N-1)}{N^2} \int_{\Omega} \vert\nabla u^{\frac{N}{2}} \vert^2  +\frac{2\chi(N-1)}{N} \int_\Omega u^{\frac{N}{2}}\nabla u^{\frac{N}{2}} \cdot  \nabla w  -\frac{\alpha_1}{2}\int_\Omega u^{N+1} + C_0\vert \Omega \vert \nonumber \\
\le&  -\frac{4d_1(N-1)}{N^2} \int_{\Omega} \vert\nabla u^{\frac{N}{2}} \vert^2 +\frac{2\chi(N-1)}{N} \int_\Omega\Big( \epsilon u^{N+1}+\epsilon \vert\nabla u^{\frac{N}{2}} \vert^2 \Big)\nonumber \\
&+\frac{2\chi(N-1)C(\epsilon)}{N}\int\vert\nabla w\vert^{2(N+1)} -\frac{\alpha_1}{2}\int_\Omega u^{N+1}+C(\epsilon)  \nonumber \\
\le&  \Big(-\frac{4d_1(N-1)}{N^2}+\frac{2\chi(N-1)\epsilon}{N}\Big) \int_{\Omega} \vert\nabla u^{\frac{N}{2}} \vert^2\nonumber \\&+\Big(\frac{2\chi(N-1)\epsilon}{N}-\frac{\alpha_1}{2}\Big) \int_\Omega  u^{N+1} +C(\epsilon) \nonumber \\
\le&  -\frac{2d_1(N-1)}{N^2} \int_{\Omega} \vert\nabla u^{\frac{N}{2}} \vert^2-\frac{\alpha_1}{4} \int_\Omega u^{N+1}+C(\epsilon),
\end{align}
where $\epsilon$ is chosen to be sufficiently small.  Then we can have from (\ref{29}) that $\Vert u(\cdot,t)\Vert_{L^N}\leq C$, $\forall t\in(0,T_{\max})$ and this, in light of Lemma \ref{lemma23}, implies that $\Vert \nabla w(\cdot,t)\Vert_{L^q}\leq C$ for any $q\in[1,\infty)$.

In particular, we note that $\Vert \nabla w(\cdot,t)\Vert_{L^{2(N+2)}}$ is bounded, therefore we can choose $p=N+1$ in (\ref{28}) and perform the same calculations as in (\ref{29}) to show that $\Vert u(\cdot,t)\Vert_{L^{N+1}}$ is uniformly bounded.  Once again by applying Lemma \ref{lemma23}, we can show that $\Vert \nabla w(\cdot,t)\Vert_{L^\infty}$ is bounded.

Without losing the generality of our analysis, we assume that $\Vert\phi(w) \nabla w\Vert_{L^\infty}\leq 1$, and then we have from (\ref{28})
{\small\[\frac{1}{p}\frac{d}{dt}\int_{\Omega} u^p\leq -\frac{4d_1(p-1)}{p^2} \int_{\Omega} \vert\nabla u^{\frac{p}{2}} \vert^2+\frac{2\chi(p-1)}{p} \int_\Omega u^{\frac{p}{2}}\vert \nabla u^{\frac{p}{2}}\vert+a_1^*\int_\Omega u^p-\alpha_1\int_\Omega u^{p+1},\]}where $\alpha_1^*=\alpha_1+\beta_1\sup_{t\geq0}\Vert w(\cdot,t)\Vert_{L^\infty}$.  We can further estimate it
\begin{align}\label{210}
\frac{1}{p}\frac{d}{dt}\int_{\Omega} u^p
&\le  -\frac{4d_1(p-1)}{p^2} \int_{\Omega} \vert\nabla u^{\frac{p}{2}} \vert^2 +\frac{2\chi(p-1)}{p} \int_\Omega\Big(\epsilon \vert\nabla u^{\frac{p}{2}} \vert^2+\frac{1}{4\epsilon} u^p\Big)+\alpha_1^*\int_\Omega u^p \nonumber \\
&\le  \Big(-\frac{4d_1(p-1)}{p^2}+\frac{2\chi(p-1)\epsilon}{p}\Big) \int_{\Omega} \vert\nabla u^{\frac{p}{2}}+\Big(\frac{\chi(p-1)}{2p\epsilon}+\alpha_1^*\Big) \int_\Omega  u^p \nonumber \\
&\le   -\frac{2d_1(p-1)}{p^2} \int_{\Omega} \vert\nabla u^{\frac{p}{2}} \vert^2+\Big(\frac{\chi^2(p-1)}{2d_1}+\alpha_1^*\Big)\int_\Omega  u^p,
\end{align}
by taking $\epsilon$ smaller than $\frac{d_1}{p\chi}$. Letting $p$ go to $\infty$ and applying the standard Moser--Alikakos $L^p$--iteration \cite{A0} on (\ref{210}), we can show that there exists a constant $C>0$ such that $\Vert u(\cdot,t) \Vert_{L^\infty}\leq C, \forall t\in(0,T_{\max})$.  Similarly, we can prove the uniform boundedness of $\Vert v(\cdot,t) \Vert_{L^\infty}$.  The proof of this proposition completes.
\end{proof}

\subsection{\emph{A priori} estimates of fully parabolic system for $N\le2$}
According to Lemma \ref{lemma23} and Proposition \ref{proposition21}, in order to obtain the global existence of (\ref{11}), it is sufficient to establish the boundedness of $u$ and $v$ in their $L^p$--norms for some $p\geq N$.  Lemma \ref{lemma22} already gives $L^1$ boundedness from which global existence of (\ref{11}) in 1D follows.  In this section, we restrict our attention to study (\ref{11}) over 2D and we want to prove the boundedness of $\Vert (u,v)(\cdot,t)\Vert_{L^2}$.  First of all, we introduce several entropy--type inequalities to estimate weighted functions $\Vert u\ln u\Vert_{L^1}$, $\Vert v\ln v\Vert_{L^1}$ and $\Vert \nabla w \Vert_{L^2}$, etc.  The \emph{a priori} estimates rely on Gagliardo--Nirenberg interpolation inequality for which \cite{NSY,TaoW} are good references.  We now give the following lemma.
\begin{lemma}\label{lemma24}
Let $\Omega \subset \mathbb R^2$ be a bounded domain with smooth boundary $\partial \Omega$.  Suppose that all the conditions in Theorem \ref{theorem21} are satisfied and $(u,v,w)$ are the local solutions to (\ref{11}) on $\Omega \times (0,T_{\max})$.  Then there exists a positive constant $C$ independent of $T_{\max}$ such that
\begin{equation}\label{211}
\Vert u \ln u \Vert_{L^1(\Omega)}+\Vert v \ln v \Vert_{L^1(\Omega)}+\Vert\nabla w \Vert_{L^2(\Omega)} <C, \forall t\in(0,T_{max}).
\end{equation}
\end{lemma}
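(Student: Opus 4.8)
The plan is to build a coupled entropy functional
$\mathcal E(t)=\int_\Omega u\ln u+\int_\Omega v\ln v+\frac{a}{2}\int_\Omega|\nabla w|^2$
for a suitably chosen weight $a>0$ and to show it satisfies a dissipative differential inequality that keeps it bounded on $(0,T_{\max})$. Since the classical solution is strictly positive for $t>0$, the logarithmic entropies are well defined, and it is enough to estimate $\mathcal E$ on $[\tau,T_{\max})$ for an arbitrarily small $\tau>0$; the bound \eqref{211} then follows because $\|u\ln u\|_{L^1}\le\int_\Omega u\ln u+C$ and similarly for $v$.

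First I would test the $u$-- and $v$--equations against $1+\ln u$ and $1+\ln v$. Integrating by parts with the no--flux conditions, the diffusion yields the entropy dissipation $-4d_1\int_\Omega|\nabla\sqrt u|^2$ (and its $v$ analogue), while the prey--taxis term contributes $\chi\int_\Omega\phi(w)\nabla u\cdot\nabla w$. Writing $\nabla u=2\sqrt u\,\nabla\sqrt u$ and using $\|\phi(w)\|_{L^\infty}\le1$ (legitimate by \eqref{23}), Young's inequality controls this by $d_1\int_\Omega|\nabla\sqrt u|^2+\frac{\chi^2}{d_1}\int_\Omega u|\nabla w|^2$. Crucially, because $w$ is bounded the reaction contribution $\int_\Omega(1+\ln u)\big(\alpha_1(1-u)u+\beta_1uw\big)$ is dominated by $-\frac{\alpha_1}{2}\int_\Omega u^2\ln u+C$, a super--quadratic damping. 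In parallel I would test the $w$--equation against $-\Delta w$, giving $\frac12\frac{d}{dt}\int_\Omega|\nabla w|^2=-d_3\int_\Omega|\Delta w|^2-\int_\Omega\Delta w\,g$ with $g=\alpha_3(1-w)w-\beta_{31}uw-\beta_{32}vw$; since $w$ is bounded, $\|g\|_{L^2}\le C(1+\|u\|_{L^2}+\|v\|_{L^2})$, so this step couples the $\nabla w$ estimate back to $\|u\|_{L^2},\|v\|_{L^2}$ through the dissipation $\int_\Omega|\Delta w|^2$.

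The hard part is the term $\int_\Omega u|\nabla w|^2$, which is critical in dimension two and would be out of reach for a plain $L^2$ energy. Here I would exploit the uniform $L^\infty$--bound on the prey density $w$ from Lemma \ref{lemma22} once more: the Gagliardo--Nirenberg type inequalities $\int_\Omega|\nabla w|^4\le C\|w\|_{L^\infty}^2\int_\Omega|\Delta w|^2$ and $\int_\Omega|\nabla w|^2\le C\|w\|_{L^\infty}\|\Delta w\|_{L^2}$ (valid under the Neumann condition) render the taxis term subcritical, so that Young's inequality gives $\frac{\chi^2}{d_1}\int_\Omega u|\nabla w|^2\le\delta\int_\Omega u^2+\frac{C}{\delta}\int_\Omega|\Delta w|^2$, linear in both $\int_\Omega u^2$ and the $w$--dissipation. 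I would then fix $a$ large enough that all the $\int_\Omega|\Delta w|^2$ generated by the taxis and coupling terms is absorbed into $-ad_3\int_\Omega|\Delta w|^2$. The coefficients this leaves in front of $\int_\Omega u^2$ and $\int_\Omega v^2$ may be large, but that is harmless: the elementary inequality $\int_\Omega u^2\le\varepsilon\int_\Omega u^2\ln u+C_\varepsilon$ lets the damping $-\frac{\alpha_1}{2}\int_\Omega u^2\ln u$ absorb any fixed multiple of $\int_\Omega u^2$. This is precisely why the logarithmic entropy, rather than an $L^2$ energy, is the correct object and why no smallness assumption on $\chi,\xi$ is required.

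Collecting the estimates I expect $\frac{d}{dt}\mathcal E\le C-\frac{c_0}{2}\int_\Omega u^2\ln u-\frac{c_0}{2}\int_\Omega v^2\ln v-\frac{ad_3}{4}\int_\Omega|\Delta w|^2$ for some $c_0>0$. Using the pointwise bound $u\ln u\le u^2\ln u$ and $\int_\Omega|\nabla w|^2\le C\|w\|_{L^\infty}\|\Delta w\|_{L^2}$, the three dissipation terms dominate $\int_\Omega u\ln u$, $\int_\Omega v\ln v$ and $\big(\int_\Omega|\nabla w|^2\big)^2$ respectively, so the right--hand side is bounded above by $C$ minus a coercive function of $\mathcal E$. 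A standard ODE comparison then shows $\mathcal E$ is bounded uniformly in $t\in(0,T_{\max})$, which is exactly \eqref{211}. To summarize, the single real obstacle is the critical two--dimensional taxis term $\int_\Omega u|\nabla w|^2$, and the two features that defuse it are the uniform $L^\infty$--control of $w$ and the super--quadratic dissipation coming from the logistic kinetics.
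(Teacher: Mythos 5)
Your proposal is correct and follows essentially the same route as the paper's own proof: you test the $u$-- and $v$--equations against $1+\ln u$ and $1+\ln v$, test the $w$--equation against $-\Delta w$, use the uniform bound on $w$ from Lemma \ref{lemma22} together with a Gagliardo--Nirenberg inequality of the type (\ref{215}) to reduce the critical taxis contribution $\int_\Omega u|\nabla w|^2$ to $\delta\int_\Omega u^2+C_\delta\int_\Omega|\Delta w|^2$, absorb the $\int_\Omega u^2$, $\int_\Omega v^2$ terms into the superquadratic damping $-c\int_\Omega u^2\ln u$ via $s^2\le\epsilon s^2\ln s+C_\epsilon$, choose the weight on $\int_\Omega|\nabla w|^2$ large enough to absorb all $\int_\Omega|\Delta w|^2$ terms, and close with a Gr\"onwall argument, exactly as in the paper. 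The only cosmetic difference is that you state the Gagliardo--Nirenberg bound in the homogeneous form $\int_\Omega|\nabla w|^4\le C\Vert w\Vert_{L^\infty}^2\int_\Omega|\Delta w|^2$, whereas the paper's version (\ref{215}) carries an additive constant, which is the form you should use but which changes nothing in the argument.
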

\begin{proof}
Using $u$--equation of (\ref{11}) and the boundedness of $\Vert\phi(w)\Vert_{L^\infty}$, we have from integration by parts and Young's inequality
\begin{align}\label{212}
&\frac{d}{dt} \int_\Omega u\ln u=\int_\Omega(\ln u+1)u_t \nonumber \\
=&\int_\Omega \nabla \cdot(d_1 \nabla u-\chi u \phi(w)\nabla w)(\ln u+1)+\int_\Omega(\alpha_1-\alpha_1u+\beta_1w)u (\ln u+1)  \nonumber \\
=&-\int_\Omega (d_1 \nabla u-\chi u \phi(w)\nabla w)\cdot \frac{\nabla u}{u}+\int_\Omega(\alpha_1-\alpha_1u+\beta_1w)u (\ln u+1)  \nonumber \\
\leq &-d_1\int_\Omega\frac{\vert\nabla u \vert^2}{ u}+\chi \int_\Omega \phi(w) \sqrt u\frac{\nabla u}{\sqrt u} \cdot\nabla w -\frac{\alpha_1}{2}\int_\Omega u^2\ln u+C_1, \nonumber \\
\leq& -\frac{d_1}{2}\int_\Omega\frac{\vert\nabla u \vert^2}{u}-\frac{\alpha_1}{4}\int_\Omega u^2\ln u+C_2\int_\Omega \vert\nabla w \vert^4+C_3,
\end{align}
where $C_1$, $C_2$ and $C_3$ are positive constants.  Similarly we obtain from the $v$--equation
\begin{equation}\label{213}
\frac{d}{dt} \int_\Omega v\ln v\leq-\frac{d_2}{2}\int_\Omega\frac{\vert\nabla v \vert^2}{v}-\frac{\alpha_2}{4}\int_\Omega v^2\ln v+C_4\int_\Omega \vert\nabla w \vert^4+C_5.
\end{equation}

On the other hand, we have from straightforward calculations
\begin{align}\label{214}
&\frac{1}{2}\frac{d}{dt} \int_\Omega \vert \nabla w\vert^2=\int_\Omega\nabla w \cdot \nabla w_t=- \int_\Omega\Delta w w_t\nonumber \\
=&- \int_\Omega\Delta w \big(d_3\Delta w+\alpha_3(1-w)w-\beta_{31}uw-\beta_{32}vw\big)\nonumber \\
&\leq-d_3\int_\Omega \vert\Delta w\vert^2+\int_\Omega \vert\Delta w\vert(C_6u+C_7v+C_8)\nonumber\\
&\leq -\frac{d_3}{2} \int_\Omega \vert\Delta w\vert^2+C_9\int_\Omega u^2+C_{10}\int_\Omega v^2+C_{11},
\end{align}
where $C_i$'s are positive constants.  In light of (\ref{23}), we have from Gagliardo--Nirenberg interpolation that there exists $C_*>0$ such that
\begin{equation}\label{215}
\Vert \nabla w \Vert^4_{L^4}\leq C_*(\Vert \Delta w \Vert^2_{L^2}+1).
\end{equation}

Multiplying (\ref{215}) by $\mu_1$ and then adding it to (\ref{212}) and (\ref{213}), we have
\begin{align*}
&\frac{d}{dt} \Big(\int_\Omega u\ln u+v\ln v+\frac{\mu_1}{2}|\nabla w|^2\Big)\\
\leq& -\frac{d_1}{2}\int_\Omega\frac{\vert\nabla u \vert^2}{ u}-\frac{\alpha_1}{4}\int_\Omega u^2\ln u+C_2\int_\Omega \vert\nabla w \vert^4+C_3%\\
\end{align*}\begin{align*}&-\frac{d_2}{2}\int_\Omega\frac{\vert\nabla v \vert^2}{v}-\frac{\alpha_2}{4}\int_\Omega v^2\ln v+C_4\int_\Omega \vert\nabla w \vert^4+C_5\\
&-\frac{d_3\mu_1}{2} \int_\Omega \vert\Delta w\vert^2+C_9\int_\Omega u^2+C_{10}\int_\Omega v^2+C_{11}\\
\leq &-\frac{d_1}{2}\int_\Omega\frac{\vert\nabla u \vert^2}{ u} -\frac{d_2}{2}\int_\Omega\frac{\vert\nabla v \vert^2}{v}-\frac{\alpha_1}{4}\int_\Omega u^2\ln u-\frac{\alpha_2}{4}\int_\Omega v^2\ln v\\
&+(C_2+C_4)\int_\Omega \vert\nabla w \vert^4-\frac{d_3\mu_1}{2} \int_\Omega \vert\Delta w\vert^2+C_{12}  \\
\leq& -\frac{\alpha_1}{4}\int_\Omega u^2\ln u-\frac{\alpha_2}{4}\int_\Omega v^2\ln v+(C_2+C_4)\int_\Omega \vert\nabla w \vert^4-\frac{d_3\mu_1}{2} \int_\Omega \vert\Delta w\vert^2+C_{12},
\end{align*}
where $C_i$ are positive constants.  Choose $\mu_1$ large such that $\frac{d_3\mu_1}{2}\geq C_*(C_2+C_4+1)$.  We apply the fact that $s\ln s\leq \epsilon s^2\ln s+C_\epsilon$, $\forall s>0$, and Young's inequality to obtain
\begin{equation}\label{216}
\frac{d}{dt}\Big(\int_\Omega u\ln u+ \int_\Omega v\ln v+ \frac{\mu_1}{2}\int_\Omega \vert\nabla w\vert^2\Big)+\int_\Omega u\ln u+ \int_\Omega v\ln v+ \frac{\mu_1}{2}\int_\Omega \vert\nabla w\vert^2\leq C,
\end{equation}
where $\epsilon=\min\{\frac{\alpha_1}{4},\frac{\alpha_2}{4}\}$.  Solving (\ref{216}) through Gronwall's lemma gives rise to (\ref{211}).
\end{proof}

\begin{lemma}\label{lemma25}
Under the same conditions as in Lemma \ref{lemma24}, there exists a constant $C>0$ such that
\begin{equation}\label{217}
\Vert u(\cdot,t) \Vert_{L^2}+\Vert v(\cdot,t) \Vert_{L^2}\leq C, \forall t \in(0,T_{\max}).
\end{equation}
\end{lemma}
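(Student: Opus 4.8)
The plan is to derive a single differential inequality for the functional $y(t)=\tfrac12\Vert u\Vert_{L^2}^2+\tfrac12\Vert v\Vert_{L^2}^2+\tfrac{\kappa}{2}\Vert\nabla w\Vert_{L^2}^2$ for a suitably large constant $\kappa>0$, and then close it with Gronwall's lemma. First I would test the $u$-- and $v$--equations of (\ref{11}) against $u$ and $v$ respectively, integrate by parts over $\Omega$ (the homogeneous Neumann conditions discard the boundary terms), and estimate the prey--taxis cross terms through $\Vert\phi(w)\Vert_{L^\infty}\le1$ and Young's inequality. The Lotka--Volterra kinetics supply a favourable cubic damping: absorbing $\int_\Omega(\alpha_1+\beta_1w)u^2$ via $u^2\le\epsilon u^3+C_\epsilon$ and (\ref{23}) yields
\begin{equation*}
\frac{1}{2}\frac{d}{dt}\Vert u\Vert_{L^2}^2+\frac{d_1}{2}\int_\Omega|\nabla u|^2+\frac{\alpha_1}{2}\int_\Omega u^3\le C\int_\Omega u^2|\nabla w|^2+C,
\end{equation*}
and similarly for $v$. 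The entire difficulty is concentrated in the taxis--induced term $\int_\Omega u^2|\nabla w|^2$.

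Next I would couple these with the gradient estimate already present in the proof of Lemma \ref{lemma24}: testing the $w$--equation against $-\Delta w$ produces (\ref{214}), which contributes the dissipation $-\tfrac{d_3}{2}\int_\Omega|\Delta w|^2$ at the price of the controllable quantities $\int_\Omega u^2$ and $\int_\Omega v^2$. I would then treat the bad term by H\"older's inequality, $\int_\Omega u^2|\nabla w|^2\le\Vert u\Vert_{L^4}^2\Vert\nabla w\Vert_{L^4}^2$, insert the Gagliardo--Nirenberg bound (\ref{215}) to replace $\Vert\nabla w\Vert_{L^4}^2$ by $\Vert\Delta w\Vert_{L^2}+1$, and invoke the two--dimensional Gagliardo--Nirenberg inequality $\Vert u\Vert_{L^4}^2\le C\Vert\nabla u\Vert_{L^2}\Vert u\Vert_{L^2}+C\Vert u\Vert_{L^2}^2$ to expose the gradient dissipation. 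Taking $\kappa$ large and distributing the resulting products by Young's inequality across the dissipation terms $\int_\Omega|\nabla u|^2$ and $\int_\Omega|\Delta w|^2$ then leaves residual contributions that are super--linear in $\Vert u\Vert_{L^2}^2$.

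The main obstacle is precisely this residual super--linearity: in the borderline dimension $N=2$ the cross term $\int_\Omega u^2|\nabla w|^2$ is critical and cannot be absorbed by Young's inequality alone. This is where the entropy estimate (\ref{211}) of Lemma \ref{lemma24} becomes indispensable. Invoking a logarithmic Gagliardo--Nirenberg inequality of Nagai--Senba--Yoshida type (see \cite{NSY}) --- which in two dimensions controls $\Vert u\Vert_{L^2}^2$ by an arbitrarily small multiple of $\Vert\nabla u\Vert_{L^2}^2$ plus a constant depending only on $\Vert u\ln u\Vert_{L^1}$ and $\Vert u\Vert_{L^1}$ --- and using that $\Vert u\ln u\Vert_{L^1}$ and $\Vert u\Vert_{L^1}$ are uniformly bounded by (\ref{211}) and (\ref{22}), I can force the coefficients multiplying the dissipation to be as small as needed. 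All the super--linear contributions are thereby absorbed into $\tfrac{d_1}{2}\int_\Omega|\nabla u|^2$, $\tfrac{d_3\kappa}{2}\int_\Omega|\Delta w|^2$ and the logistic term $\tfrac{\alpha_1}{2}\int_\Omega u^3$, up to an additive constant. The surviving inequality takes the form $y'(t)+y(t)\le C$, and Gronwall's lemma delivers the uniform bound (\ref{217}).
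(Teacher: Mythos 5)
Your overall architecture (a coupled energy functional, the entropy bound of Lemma \ref{lemma24} fed through a logarithmic Gagliardo--Nirenberg inequality, Gronwall at the end) is the same as the paper's, but your functional is one derivative short in $w$, and this is where the argument breaks. With $y(t)=\tfrac12\Vert u\Vert_{L^2}^2+\tfrac12\Vert v\Vert_{L^2}^2+\tfrac{\kappa}{2}\Vert\nabla w\Vert_{L^2}^2$ the only $w$--dissipation you generate is $\Vert\Delta w\Vert_{L^2}^2$ (from (\ref{214})), and at that level the term $\int_\Omega u^2|\nabla w|^2$ is critical in dimension two: your chain gives the leading product $\Vert\nabla u\Vert_{L^2}\Vert u\Vert_{L^2}\Vert\Delta w\Vert_{L^2}$, which is homogeneous of degree exactly $2$ in the dissipated quantities $(\Vert\nabla u\Vert_{L^2},\Vert\Delta w\Vert_{L^2})$ but carries the non--small prefactor $\Vert u\Vert_{L^2}$ --- precisely the quantity being estimated. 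Your proposed fix makes matters worse, not better: substituting $\Vert u\Vert_{L^2}^2\le\delta\Vert\nabla u\Vert_{L^2}^2+C_\delta$ (which in fact already follows from Gagliardo--Nirenberg and the $L^1$ bound (\ref{22}), without entropy) turns the product into $\delta^{1/2}\Vert\nabla u\Vert_{L^2}^2\Vert\Delta w\Vert_{L^2}$ plus lower order, which is of degree $3$ and cannot be absorbed into $\Vert\nabla u\Vert_{L^2}^2+\kappa\Vert\Delta w\Vert_{L^2}^2$ by Young's inequality no matter how small $\delta$ is. The alternative H\"older splitting $\Vert u\Vert_{L^3}^2\Vert\nabla w\Vert_{L^6}^2$ fares no better at this level, since interpolating $\Vert\nabla w\Vert_{L^6}$ against $\Vert\Delta w\Vert_{L^2}$ yields the power $\Vert\Delta w\Vert_{L^2}^{4/3}$, and the total degree $\tfrac43+\tfrac43=\tfrac83>2$ is supercritical. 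So the sentence ``distributing the resulting products by Young's inequality across the dissipation terms'' conceals a step that fails.

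The paper escapes this by including $\tfrac{\mu_2}{2}\Vert\Delta w\Vert_{L^2}^2$ in the functional: differentiating the $w$--equation and testing (identity (\ref{223})) produces the \emph{third--order} dissipation $\Vert\nabla\Delta w\Vert_{L^2}^2$, and then (\ref{220}) and (\ref{220b}) bound $\Vert\Delta w\Vert_{L^3}$ and $\Vert\nabla w\Vert_{L^6}^2$ by $C(\Vert\nabla\Delta w\Vert_{L^2}^{2/3}+1)$ --- the key point being that $\nabla w$ in $L^6$ is interpolated between $\nabla\Delta w$ in $L^2$ and $w$ in $L^\infty$ (bounded by (\ref{23})), so the dissipated quantity enters only with power $\tfrac23$. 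Combined with the Nagai--Senba--Yoshida estimate (\ref{219}), where the entropy bound (\ref{211}) is genuinely needed and supplies an arbitrarily small prefactor $\epsilon^{2/3}$, the dangerous product becomes $\epsilon^{2/3}\Vert\nabla u\Vert_{L^2}^{4/3}\Vert\nabla\Delta w\Vert_{L^2}^{2/3}$: exactly critical (degree $\tfrac43+\tfrac23=2$) \emph{and} small, hence absorbable. In short, you identified the right ingredients but placed the smallness from the entropy on the wrong factor; to repair your proof you must upgrade the $w$--component of the functional from $\Vert\nabla w\Vert_{L^2}^2$ to $\Vert\Delta w\Vert_{L^2}^2$ and run the estimates against $\Vert\nabla\Delta w\Vert_{L^2}^2$ as the paper does in (\ref{218})--(\ref{224}).
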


\begin{proof}
We have from the integration by parts, H\"older's inequality and (\ref{23}) that
\begin{align}\label{218}
&\frac{1}{2}\frac{d}{dt}\int_\Omega u^2\nonumber  \\
=& -d_1\int_\Omega \vert\nabla u \vert^2-\chi\int_\Omega u\nabla\cdot(u\phi(w)\nabla w)+\int_\Omega (\alpha_1-\alpha_1u+\beta_1w)u^2 \nonumber  \\
=& -d_1\int_\Omega \vert\nabla u \vert^2-\frac{\chi}{2}\int_\Omega u^2 (\phi(w)\Delta w+\phi'(w)|\nabla w|^2)+\int_\Omega (\alpha_1-\alpha_1u+\beta_1w)u^2\nonumber \\
\leq & -d_1\int_\Omega \vert\nabla u \vert^2+\frac{\chi}{2}\Big( \Vert u\Vert^2_{L^3}\Vert  \Delta w\Vert_{L^3}+ \Vert u\Vert^2_{L^3}\Big\Vert  |\nabla w|^2\Big\Vert_{L^3}\Big)-\frac{\alpha_1}{2}\int_\Omega u^3+C_1,
\end{align}
where to derive the inequality we have assumed that $\Vert \phi(w)\Vert_{L^\infty}, \Vert\phi'(w)\Vert_{L^\infty}\leq 1 $ without loss of our generality.

To estimate (\ref{218}), we apply Lemma 3.5 of \cite{NSY} with $p=3$ to have that for any $\epsilon>0$, there exists a constant $C(\epsilon)>0$ such that $\Vert u\Vert_{L^3}\leq \epsilon \Vert \nabla u\Vert^\frac{2}{3}_{L^2}\Vert u\ln u\Vert^\frac{1}{3}_{L^1}+C(\epsilon)(\Vert u\ln u\Vert_{L^1}+\Vert u\Vert^\frac{1}{3}_{L^1})$.  This fact and (\ref{211}) imply that
\begin{equation}\label{219}
\Vert u\Vert^2_{L^3} \leq \big(\epsilon \Vert \nabla u\Vert^2_{L^2} +C_2\big)^\frac{2}{3}.
\end{equation}
Thanks to the Gagliardo--Nirenberg inequality \cite{TaoW} and (\ref{211}), we have
\begin{equation}\label{220}
\Vert \Delta w\Vert_{L^3} \leq C_3 \big(\Vert \nabla \Delta w\Vert^\frac{2}{3}_{L^2} \Vert\nabla w\Vert^\frac{1}{3}_{L^2}+\Vert \nabla w\Vert_{L^2}\big)\leq C_4 \big(\Vert \nabla \Delta w\Vert^\frac{2}{3}_{L^2}+1\big),
\end{equation}
and
\begin{align}\label{220b}
  \Big\Vert  |\nabla w|^2\Big\Vert_{L^3}=\Vert  \nabla w\Vert^2_{L^6}\leq C_5 \big(\Vert \nabla \Delta w\Vert^\frac{2}{3}_{L^2} \Vert w\Vert^\frac{4}{3}_{L^\infty}+\Vert  w\Vert^2_{L^\infty}\big)\leq C_6 \big(\Vert \nabla \Delta w\Vert^\frac{2}{3}_{L^2}+1\big).
\end{align}
Combining (\ref{219}) with (\ref{220}) and (\ref{220b}), we have from Young's inequality that
\begin{align*}
\Vert u\Vert^2_{L^3} \Vert \Delta w\Vert_{L^3} & \leq  C_7 \big(\epsilon \Vert \nabla u\Vert^2_{L^2} +C_0\big)^\frac{2}{3}\big(\Vert \nabla \Delta w\Vert^\frac{2}{3}_{L^2}+1\big)\\
& \leq  C_8 \big(\epsilon \Vert \nabla u\Vert^\frac{4}{3}_{L^2} +C_0\big)\big(\Vert \nabla \Delta w\Vert^\frac{2}{3}_{L^2}+1\big)\\
& \leq \frac{d_1}{2\chi}\Vert \nabla u\Vert^2_{L^2}+\epsilon\Vert\nabla\Delta w\Vert^2_{L^2}+C_9
\end{align*}
and that
\begin{align*}
\Vert u\Vert^2_{L^3} \Big\Vert  |\nabla w|^2\Big\Vert_{L^3} \leq \frac{d_1}{2\chi}\Vert \nabla u\Vert^2_{L^2}+\epsilon\Vert\nabla\Delta w\Vert^2_{L^2}+C_{10},
\end{align*}
therefore (\ref{218}) gives us
\begin{align}\label{221}
\frac{1}{2}\frac{d}{dt}\int_\Omega u^2+\frac{d_1}{2}\int_\Omega \vert\nabla u \vert^2\leq 2\epsilon \int_\Omega \vert \nabla \Delta w\vert^2-\frac{\alpha_1}{2}\int_\Omega u^3+C_{11}.
\end{align}
By the same arguments, we can show that
\begin{align}\label{222}
\frac{1}{2}\frac{d}{dt}\int_\Omega v^2+\frac{d_2}{2}\int_\Omega \vert\nabla v \vert^2\leq 2\epsilon \int_\Omega\vert \nabla \Delta w\vert^2-\frac{\alpha_2}{2}\int_\Omega v^3+C_{12}.
\end{align}

On the other hand, we operate $\nabla$ to $w$--equation in (\ref{11}).  Then it follows from Young's inequality and integration by parts that
\begin{align}\label{223}
\frac{1}{2}&\frac{d}{dt} \int_\Omega \vert \Delta w\vert^2=\int_\Omega \Delta w \Delta w_t=-\int_\Omega\nabla \Delta w \cdot \nabla w_t \nonumber \\
=&-\int_\Omega \nabla \Delta w \cdot \nabla\big(d_3\Delta w+\alpha_3(1-w)w-\beta_{31}uw-\beta_{32}vw\big)\nonumber \\
=&-d_3\int_\Omega \vert\nabla\Delta w\vert^2+ \int_\Omega \nabla \Delta w \cdot ( \alpha_3\nabla w-2\alpha_3 w\nabla w -\beta_{31}u\nabla w)  \nonumber\\
& + \int_\Omega\nabla\Delta w\cdot(-\beta_{31}w\nabla u -\beta_{32}v\nabla w-\beta_{32}w\nabla v) \nonumber \\
\leq& -\frac{d_3}{2}\int_\Omega \vert\nabla\Delta w\vert^2-\alpha_3\int_\Omega|\Delta w|^2+C_{13}\int_\Omega \vert\nabla u \vert^2+C_{14}\int_\Omega \vert\nabla v \vert^2+C_{15}\int_\Omega \vert\nabla w \vert^2\nonumber\\
&+C_{16}\int_\Omega u^2|\nabla w|^2+C_{17}\int_\Omega v^2|\nabla w|^2 \nonumber \\
 \leq& (-\frac{d_3}{2}+2\epsilon)\int_\Omega \vert\nabla\Delta w\vert^2-\alpha_3\int_\Omega|\Delta w|^2+2C_{13}\int_\Omega \vert\nabla u \vert^2+2C_{14}\int_\Omega \vert\nabla v \vert^2+C_{18}, \nonumber \\
\end{align}
where we used the inequalities
\[C_{16}\int_{\Omega}u^2|\nabla w|^2\leq C_{16} \Vert u\Vert^2_{L^3} \Big\Vert  |\nabla w|^2\Big\Vert_{L^3} \leq C_{13}\Vert \nabla u\Vert^2_{L^2}+\epsilon\Vert\nabla\Delta w\Vert^2_{L^2}+C_{19}\]
and
\[C_{17}\int_{\Omega}v^2|\nabla w|^2\leq C_{16}\Vert v\Vert^2_{L^3} \Big\Vert  |\nabla w|^2\Big\Vert_{L^3} \leq C_{14}\Vert \nabla v\Vert^2_{L^2}+\epsilon\Vert\nabla\Delta w\Vert^2_{L^2}+C_{20}.\]
Multiplying (\ref{223}) by $\mu_2$ sufficiently small with $\epsilon=\frac{d_3\mu_2}{4(2+\mu_2)}$, we add it with (\ref{221}) and (\ref{222}) to have
\begin{align}\label{224}
\frac{d}{dt}\Big(\frac{1}{2}\int_\Omega u^2+\frac{1}{2}\int_\Omega v^2+\frac{\mu_2}{2}\int_\Omega \vert \Delta w\vert^2\Big)+(\int_{\Omega}u^2+\int_{\Omega}u^2+\alpha_3\mu_2\int_\Omega|\Delta w|^2)\leq C.
\end{align}
Therefore (\ref{217}) follows from (\ref{224}) thanks to Gr\"onwall's inequality.
\end{proof}

\subsection{Existence and boundedness of global solutions}
We now present our main results on the global existence and uniform boundedness of positive classical solutions to (\ref{11}).
\begin{theorem}\label{theorem26}
Let $\Omega$ be a bounded domain in $\mathbb R^N$, $N\leq2$.  Suppose that all the parameters in (\ref{11}) are positive and $\phi\in C^3(\mathbb R;\mathbb R)$.  Then for any nonnegative initial data $(u_0,v_0,w_0)\in C^1(\bar \Omega)\times C^1(\bar \Omega)\times W^{2,p}(\Omega)$, $p>N$, there exists a unique triple $(u,v,w)$ of nonnegative bounded funcstions in $C^0(\bar \Omega\times[0,\infty))\cap C^{2,1}(\bar \Omega\times(0,\infty))$ which solve (\ref{11}) classically in $\Omega\times(0,\infty)$.
\end{theorem}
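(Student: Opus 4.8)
The plan is to reduce the theorem to the extensibility criterion already contained in Theorem \ref{theorem21}, which guarantees that a local solution becomes global as soon as $\sup_{s\in(0,t)}\Vert(u,v,w)(\cdot,s)\Vert_{L^\infty}$ remains finite on $(0,T_{\max})$. Since Lemma \ref{lemma22} already provides a time-independent $L^\infty$ bound on the prey density $w$, the entire problem reduces to producing a uniform-in-time $L^\infty$ bound for the two predator densities $u$ and $v$.

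To obtain that bound I would appeal directly to Proposition \ref{proposition21}: it converts a uniform bound on $\Vert\nabla w(\cdot,t)\Vert_{L^{2(N+1)}}$ into the desired estimate $\Vert u\Vert_{L^\infty}+\Vert v\Vert_{L^\infty}\le C_2$ on $(0,T_{\max})$, whereupon Theorem \ref{theorem21} forces $T_{\max}=\infty$. Thus the only thing left to verify is the hypothesis of Proposition \ref{proposition21}, namely the boundedness of $\nabla w$ in $L^{2(N+1)}$, and this is exactly where Lemma \ref{lemma23} and the restriction $N\le2$ come into play.

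I would then split according to dimension. For $N=1$ the $L^1$ bound of Lemma \ref{lemma22} already sits at the borderline exponent $p=N=1$ of Lemma \ref{lemma23}, so that $\Vert w(\cdot,t)\Vert_{W^{1,q}}$ is controlled for every finite $q$; taking $q=2(N+1)=4$ supplies the needed bound on $\Vert\nabla w\Vert_{L^4}$. For $N=2$ the $L^1$ control is insufficient, so I would instead feed the $L^2$ estimate of Lemma \ref{lemma25}---itself the payoff of the entropy inequality in Lemma \ref{lemma24}---into Lemma \ref{lemma23} with $p=N=2$; this is again the critical case $p=N$, yielding $\Vert w(\cdot,t)\Vert_{W^{1,q}}$ for all finite $q$, and the choice $q=2(N+1)=6$ closes the loop. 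Because every constant appearing in Lemmas \ref{lemma22}--\ref{lemma25} and Proposition \ref{proposition21} is independent of $T_{\max}$, the resulting $L^\infty$ bounds on $u,v,w$ are uniform in time, which is precisely the boundedness asserted.

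The regularity statement, that the solution belongs to $C^0(\bar\Omega\times[0,\infty))\cap C^{2,1}(\bar\Omega\times(0,\infty))$, then follows from the classical-solution and H\"older-regularity conclusions of Theorem \ref{theorem21} combined with standard parabolic Schauder and $L^p$ estimates, now valid on all of $(0,\infty)$ since global existence has been established. I expect the main obstacle to be conceptual rather than computational: the argument succeeds only because it lands exactly on the endpoint exponent $p=N$ in Lemma \ref{lemma23}, which is available precisely because the \emph{a priori} machinery delivers $L^1$ control when $N=1$ and $L^2$ control when $N=2$. In dimensions $N\ge3$ the entropy method no longer reaches $p=N$, which explains why the fully parabolic result is confined to $N\le2$ and why the higher-dimensional case is handled separately through the parabolic--parabolic--elliptic reduction of Theorem \ref{theorem27}.
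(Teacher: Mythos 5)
Your proposal is correct and follows essentially the same route as the paper: reduce to the extensibility criterion of Theorem \ref{theorem21}, verify the hypothesis of Proposition \ref{proposition21} via Lemma \ref{lemma23} fed with the $L^1$ bound of Lemma \ref{lemma22} in 1D and the $L^2$ bound of Lemma \ref{lemma25} (via the entropy estimates of Lemma \ref{lemma24}) in 2D, and conclude the stated regularity from parabolic $L^p$ and Schauder estimates. Your explicit dimension split and your remark about landing on the endpoint $p=N$ in Lemma \ref{lemma23} merely make precise what the paper's proof states in compressed form.
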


\begin{proof}
According to Theorem \ref{theorem21} and Lemma \ref{23}, we only need to show that $\Vert (u(\cdot,t),v(\cdot,t))\Vert_{L^\infty}$ is uniformly bounded for all $t\in(0,T_{\max})$, then $T_{\max}=\infty$ and the existence part of Theorem \ref{theorem26} follows.  Moreover, one can apply the standard parabolic boundary $L^p$--estimates and Schauder estimates in \cite{LSU} to verify that $u_t,v_t,w_t$ and all spatial partial derivatives of $u$, $v$ and $w$ up to second order are bounded on $\bar \Omega \times [0,\infty)$, therefore $(u,v,w)$ have the regularities stated in Theorem \ref{theorem26}

Thanks to Lemma \ref{lemma23} and Lemma \ref{lemma25}, we have that $\Vert\nabla w(\cdot,t)\Vert_{L^p}$ is bounded for any $p\in(1,\infty)$, then the uniform boundedness of $\Vert (u,v)(\cdot,t)\Vert_{L^\infty}$ follows from Proposition \ref{proposition21}.  Together with (\ref{23}), we conclude the proof of Theorem \ref{theorem26}.
\end{proof}

Our proof of the boundedness of $\Vert (u,v) \Vert_{L^\infty}$ hence the global existence of (\ref{11}) is restricted to the domain with dimension $N=1,2$ due to technical reasons.  It is well known that, in order to prove the boundedness of $\Vert \nabla w\Vert_{L^\infty}$, it is sufficient to prove the $L^p$--boundedness of $u$ and $v$ for some $p>2(N+1)$ for a wide class of reaction--diffusion system with biological relevance, for instance the Keller--Segel chemotaxis models with no cellular growth.  The logistic decay in (\ref{11}) contributes so one only needs to prove $L^p$--boundedness of $u$ and $v$ for some $p>N$.  Whether or not the logistic dynamics are sufficient to prevent finite or infinite time blowups for (\ref{11}) over higher dimensional domains is unclear in the literature and it is beyond the scope of this paper.  However, we can show the global existence and boundedness for the following parabolic--parabolic--elliptic system of (\ref{11}) in the following system regardless of space dimensions
\begin{equation}\label{225}
\left\{
\begin{array}{ll}
u_t=\nabla \cdot (d_1\nabla u -\chi u \phi(w) \nabla w)+\alpha_1(1-u)u+\beta_1uw,&x \in \Omega,t>0,\\
v_t=\nabla \cdot (d_2\nabla v -\xi v \phi(w) \nabla w)+\alpha_2(1-v)v+\beta_2vw,&x \in \Omega,t>0,\\
0=d_3\Delta w+\alpha_3(1-w)w-\beta_{31}uw-\beta_{32}vw,&   x \in \Omega,t>0,      \\
\partial_\textbf{n} u =\partial_\textbf{n} v =\partial_\textbf{n} w =0,&x\in \partial \Omega,t>0,\\
u(x,0)=u_0(x),v(x,0)=v_0(x),w(x,0)=w_0(x),&x\in\Omega,
\end{array}
\right.
\end{equation}
To be precise, we have the following results.
\begin{theorem}\label{theorem27}
Let $\Omega$ be a bounded domain in $\mathbb R^N$, $N\ge3$ with smooth boundary $\partial \Omega$.  Suppose that $\phi'(w)\geq0$, $\forall w\geq0$.  Then (\ref{225}) has a unique classical solution $(u,v,w)$ which exists globally in time and satisfies the following estimate with a positive constant $C$
\[\Vert u(\cdot,t)\Vert_{L^\infty}+\Vert v(\cdot,t)\Vert_{L^\infty}+\Vert w(\cdot,t)\Vert_{L^\infty}\leq C, \forall t\in(0,\infty).\]
\end{theorem}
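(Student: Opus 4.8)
The plan is to follow the same reduction used in the fully parabolic case. By Proposition \ref{proposition21}, whose proof relies only on the $u$- and $v$-equations (which are identical in (\ref{11}) and (\ref{225})), it suffices to bound $\Vert\nabla w(\cdot,t)\Vert_{L^{2(N+1)}}$ uniformly in time, and for that it is enough to control $\Vert u(\cdot,t)\Vert_{L^p}$ and $\Vert v(\cdot,t)\Vert_{L^p}$ for a single $p>N$. The decisive advantage of the elliptic third equation is that $w$ is now slaved to $u$ and $v$, so that $\Delta w$ can be \emph{eliminated algebraically} from the taxis terms; this is exactly what allows the $L^p$ estimate to close in every space dimension, and it is the structural reason the dimension restriction $N\le2$ disappears.

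First I would record the pointwise bound $0\le w\le1$. For each fixed $t$, $w(\cdot,t)$ solves $-d_3\Delta w=\alpha_3(1-w)w-\beta_{31}uw-\beta_{32}vw$ with homogeneous Neumann data; since $w\equiv0$ is a solution and $w\equiv1$ a supersolution (using $u,v\ge0$), the elliptic maximum principle yields $0\le w\le1$, so $\Vert\phi(w)\Vert_{L^\infty}$ and $\Vert\phi'(w)\Vert_{L^\infty}$ are finite. I would then note the elliptic $L^r$-analogue of Lemma \ref{lemma23}: applying elliptic regularity to the third equation and using $0\le w\le1$ gives $\Vert w\Vert_{W^{2,r}}\le C\big(1+\Vert u\Vert_{L^r}+\Vert v\Vert_{L^r}\big)$ for every $r\in(1,\infty)$.

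The heart of the proof is a self-contained $L^p$-inequality for $u$ (and symmetrically for $v$). Testing the $u$-equation with $u^{p-1}$ and integrating the taxis term by parts, as in (\ref{28}), gives
\begin{align*}
\frac{1}{p}\frac{d}{dt}\int_\Omega u^p =&-\frac{4d_1(p-1)}{p^2}\int_\Omega|\nabla u^{\frac{p}{2}}|^2-\frac{\chi(p-1)}{p}\int_\Omega u^p\phi'(w)|\nabla w|^2\\
&-\frac{\chi(p-1)}{p}\int_\Omega u^p\phi(w)\Delta w+\int_\Omega\big(\alpha_1(1-u)+\beta_1 w\big)u^p.
\end{align*}
The assumption $\phi'(w)\ge0$ makes the second integral nonpositive, so it is simply discarded; this is the precise role of the monotonicity hypothesis. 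In the third integral I would substitute $d_3\Delta w=-\alpha_3(1-w)w+\beta_{31}uw+\beta_{32}vw$ from the elliptic equation, which removes every gradient and leaves only algebraic terms. The critical contribution is $-\tfrac{\chi(p-1)\beta_{31}}{pd_3}\int_\Omega u^{p+1}\phi(w)w$: because the prey is \emph{consumed} (the negative sign of the predation term), this genuinely superlinear term inherits a favorable sign, in sharp contrast with production-driven chemotaxis, where the analogous term destabilizes and forces a smallness condition. Together with the logistic dissipation $-\alpha_1\int_\Omega u^{p+1}$ and after absorbing the remaining lower-order terms by Young's inequality, I expect to reach $\frac{d}{dt}\int_\Omega u^p+\int_\Omega u^p\le C$ with $C$ independent of $t$, so that Grönwall's lemma gives a uniform bound on $\Vert u(\cdot,t)\Vert_{L^p}$ for every finite $p$.

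Choosing $p>N$, the elliptic estimate of the second paragraph upgrades this to a uniform bound on $\Vert w\Vert_{W^{2,p}}$, hence on $\Vert\nabla w\Vert_{L^\infty}$ and in particular on $\Vert\nabla w\Vert_{L^{2(N+1)}}$; Proposition \ref{proposition21} then delivers the uniform $L^\infty$-bounds on $u$ and $v$, and global existence follows from the extensibility criterion of Theorem \ref{theorem21}. The main obstacle is the sign bookkeeping in the substituted taxis integral: one must verify that the superlinear term $\int_\Omega u^{p+1}\phi(w)w$, together with the mixed term $\int_\Omega u^p v\,\phi(w)w$ that couples the $u$- and $v$-estimates, does not overwhelm the logistic damping. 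These terms carry favorable signs when $\phi\ge0$, and otherwise must be handled by running the estimates for $u$ and $v$ simultaneously and absorbing through Young's inequality and the combined quadratic decay. It is this consumption structure of the prey equation — not any smallness of $\chi$ or $\xi$ — that is indispensable, and it explains why uniform boundedness persists in all dimensions for (\ref{225}).
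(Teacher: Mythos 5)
Your proposal is correct and follows essentially the same route as the paper's proof: testing the $u$-equation with $u^{p-1}$, discarding the term $\int_\Omega u^p\phi'(w)|\nabla w|^2$ via $\phi'\geq0$, substituting the elliptic equation pointwise for $\Delta w$, and using the sign of the predation terms together with the logistic damping to reach $\frac{1}{p}\frac{d}{dt}\int_\Omega u^p\leq-\frac{4d_1(p-1)}{p^2}\int_\Omega\vert\nabla u^{\frac{p}{2}}\vert^2-C_1\int_\Omega u^{p+1}+C_2$, after which the $L^\infty$-bound follows by Moser--Alikakos iteration. The only cosmetic difference is the endgame --- the paper runs the Moser iteration directly on this inequality without ever bounding $\nabla w$, whereas you detour through uniform $L^p$ bounds for all finite $p$, elliptic $W^{2,p}$ regularity for $w$, and Proposition \ref{proposition21} --- and both arguments rely on the same implicit sign condition $\phi\geq0$ (not merely $\phi'\geq0$) to make the substituted superlinear terms favorable, a point you flag more explicitly than the paper does.
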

\begin{proof}
By the same analysis as in the proof of Proposition \ref{proposition21}, we shall only need to show the uniform boundedness of $\Vert u(\cdot,t)\Vert_{L^\infty}$ and $\Vert v(\cdot,t)\Vert_{L^\infty}$.  Similar to (\ref{28}), we can obtain from the integration by parts
\begin{align}\label{227}
&\frac{1}{p}\frac{d}{dt}\int_{\Omega} u^p\nonumber \\
=&-\frac{4d_1(p-1)}{p^2} \int_{\Omega} \vert\nabla u^{\frac{p}{2}} \vert^2+\frac{\chi(p-1)}{p} \int_\Omega \nabla u^p\cdot (\phi(w)\nabla w)+\int_\Omega (\alpha_1-\alpha_1u-\beta_1w)u^p\nonumber \\
=&-\frac{4d_1(p-1)}{p^2} \int_{\Omega} \vert\nabla u^{\frac{p}{2}} \vert^2-\frac{\chi(p-1)}{p} \int_\Omega u^p\Big(\phi'(w) |\nabla w|^2+\phi(w)\Delta w\Big) \nonumber\\
&+\int_\Omega (\alpha_1-\alpha_1u-\beta_1w)u^p\nonumber \\
\leq &-\frac{4d_1(p-1)}{p^2} \int_{\Omega} \vert\nabla u^{\frac{p}{2}} \vert^2-\frac{\chi(p-1)}{p} \int_\Omega u^p \phi(w)\Delta w+\int_\Omega (\alpha_1-\alpha_1u-\beta_1w)u^p.
\end{align}
In light of the pointwise identity $d_3\Delta w+\alpha_3(1-w)w-\beta_{31}uw-\beta_{32}vw=0$, we have
\begin{align}\label{228}
&\frac{1}{p}\frac{d}{dt}\int_{\Omega} u^p \\ \nonumber
\leq &-\frac{4d_1(p-1)}{p^2} \int_{\Omega} \vert\nabla u^{\frac{p}{2}} \vert^2-\frac{\chi(p-1)}{d_3p} \int_\Omega u^p \phi(w)(\alpha_3w(w-1)-\beta_{31}uw-\beta_{32}vw)\nonumber \\
&+\int_\Omega (\alpha_1-\alpha_1u-\beta_1w)u^p\nonumber \\
\leq&-\frac{4d_1(p-1)}{p^2} \int_{\Omega} \vert\nabla u^{\frac{p}{2}} \vert^2-C_1\int_\Omega u^{p+1}+C_2,
\end{align}
where $C_1$ and $C_2$ are positive constants that depend on the system parameters and $\Vert w \Vert_{L^\infty}$.  Applying the Moser--iteration gives rise to the boundedness of $\Vert u(\cdot,t)\Vert_{L^\infty}$.  By the same way we can prove the boundedness of $\Vert v(\cdot,t)\Vert_{L^\infty}$.  The proof of this Theorem completes.
\end{proof}
The assumption $\phi'(w)\geq0$ corresponds to the biological situation that the intensity of the directed dispersals of predator species increases as prey density increases, therefore there is no group defense in prey species.  We have to leave it for future works on the global existence of (\ref{225}) and its fully parabolic counter--part over higher--dimensions.

\section{Linearized stability of homogeneous equilibrium}\label{section3}
From the viewpoint of mathematical modeling, it is interesting and meaningful to investigate (\ref{11}) for its nontrivial solutions describing the spatial distributions of the interacting species over the habitat.  We shall see from the mathematical analysis that the formation of such nontrivial patterns is due to the joint effect of prey--taxis and sensitivity function.

To illustrate the effect of prey--taxis on the pattern formation and for the simplicity of calculations, we restrict our attention to (\ref{11}) over one dimension and consider the following system
\begin{equation}\label{31}
\left\{
\begin{array}{ll}
u_t=(d_1 u' -\chi u\phi(w)  w')'+\alpha_1(1-u)u+\beta_1uw,&x \in (0,L), t>0,\\
v_t=(d_2 v' -\xi v\phi(w)  w')'+\alpha_2(1-v)v+\beta_2vw,&x \in (0,L),t>0,\\
w_t=d_3 w''+\alpha_3(1-w)w-\beta_{31}uw-\beta_{32}vw,&   x \in (0,L),t>0,\\
u'(x,t)=v'(x,t)=w'(x,t)=0,&x=0,L,t>0,\\
\end{array}
\right.
\end{equation}
where $'$ denotes the derivative taken with respect to $x$ and all the parameters in (\ref{31}) are the same as those in (\ref{11}).

We can find that (\ref{31}) has six equilibria and one of them is
\[(\bar u,\bar v,\bar w)=\Big(1+\frac{\beta_1}{\alpha_1}\bar w,1+\frac{\beta_2}{\alpha_2}\bar w,\bar w\Big), \bar w= \frac{\alpha_3-\beta_{31}-\beta_{32}}{\alpha_3+\frac{\beta_1\beta_{31}}{\alpha_1}+\frac{\beta_2\beta_{32}}{\alpha_2}},\]
which is positive if and only if $ \alpha_3>\beta_{31}+\beta_{32}$.  To explore the existence and stability of stationary and oscillatory nonconstant positive solutions to (\ref{31}), our starting point is the linear stability of $(\bar u,\bar v,\bar w)$ and we shall assume that it is positive from now on.

Linearizing (\ref{31}) around the constant equilibrium $(\bar{u},\bar{v},\bar{w})$ and letting $(u,v,w)=(\bar{u},\bar{v},\bar{w})+(U,V,W)$, $U,V,W$ being small perturbations from $(\bar{u},\bar{v},\bar{w})$, we obtain the following system of $(U,V,W)$
\begin{align*}
\begin{cases}
U_t \approx (d_1 U' -\chi \bar{u} \phi(\bar{w})  W')'-{\alpha_1}\bar{u}U+\beta_1 \bar{u}W, &x\in(0,L),t>0,\\
V_t \approx (d_2 V' -\xi \bar{v} \phi(\bar{w})  W')'-{\alpha_2}\bar{v}V+\beta_2 \bar{v}W,&x\in(0,L),t>0,\\
W_t \approx d_3 W''-{\alpha_3}\bar{w} W-\beta_{31} \bar{w}U -\beta_{32} \bar{w}V,    &x\in(0,L),t>0,\\
U'(x)=V'(x)=W'(x)=0, &x=0,L,t>0.
\end{cases}
\end{align*}
According to the standard linearized stability principle (\cite{Si} e.g.), the stability of $(\bar{u},\bar{v},\bar{w})$ is determined by eigenvalues to the following matrix
\begin{align}\label{32}
\begin{pmatrix}
-d_1 (\frac{k\pi}{L})^2-{\alpha_1}\bar{u} & 0 & \chi \bar{u} \phi(\bar{w}) (\frac{k\pi}{L})^2+\beta_1 \bar{u}\\
0 & -d_2 (\frac{k\pi}{L})^2-{\alpha_2}\bar{v} & \xi \bar{v} \phi(\bar{w}) (\frac{k\pi}{L})^2+\beta_2 \bar{v}\\
-\beta_{31}\bar{w} & -\beta_{32}\bar{w} & -d_3 (\frac{k\pi}{L})^2-{\alpha_3}\bar{w}
\end{pmatrix}.
\end{align}
We have the following result.
\begin{proposition}\label{proposition31}
Assume that all the parameters in (\ref{31}) are positive and the condition $\alpha_3>\beta_{31}+\beta_{32}$ is satisfied.  Suppose that $\phi(\bar w)<0$, then the positive equilibrium $(\bar{u},\bar{v},\bar{w})$ is locally asymptotically stable if $\chi < \chi_0$ and it is unstable if $\chi >\chi_0$, where
\begin{equation}\label{33}
\chi_0=\min_{k\in\mathbb{N}^+}{\{\chi^S_k,\chi^H_k\}}
\end{equation}
with
\begin{equation}\label{34}
\chi^S_k=-\frac{\beta_{32}\bar{v}H_1}{\beta_{31}\bar{u}H_2}\xi-\frac{H_1H_2H_3+H_2\beta_{31}\beta_1\bar{u}\bar{w}+H_1\beta_{32}\beta_2\bar{v}\bar{w}}{H_2\beta_{31}\bar{u}\bar{w}\phi(\bar{w})(\frac{k\pi}{L})^2},
\end{equation}
and
\begin{align}\label{35}
\chi^H_k=&-\frac{H_1^2H_2+H_1H_2^2+H_1^2H_3+H_1H_3^2+H_2^2H_3+H_2H_3^2+2H_1H_2H_3}{(H_1+H_3)\beta_{31}\bar{u}\bar{w}\phi(\bar{w})(\frac{k\pi}{L})^2}  \nonumber \\
&-\frac{(H_1+H_3)\beta_{31}\beta_1\bar{u}+(H_2+H_3)\beta_{32}\beta_2\bar{v}}{(H_1+H_3)\beta_{31}\bar{u}\phi(\bar{w})(\frac{k\pi}{L})^2}
-\frac{(H_2+H_3)\beta_{32}\bar{v}}{(H_1+H_3)\beta_{31}\bar{u}}\xi,
\end{align}
where
\[H_1=d_1(\frac{k\pi}{L})^2+{\alpha_1}\bar{u}, H_2=d_2(\frac{k\pi}{L})^2+{\alpha_2}\bar{v},H_3=d_3(\frac{k\pi}{L})^2+{\alpha_3}\bar{w}.\]
\end{proposition}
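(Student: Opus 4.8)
The plan is to run a standard Fourier-mode plus Routh--Hurwitz analysis on the linearized system. First I would expand the perturbation $(U,V,W)$ in the Neumann eigenfunctions $\cos(\frac{k\pi x}{L})$, $k\in\mathbb{N}$, so that the growth rate of the $k$-th mode is an eigenvalue of the matrix $M_k$ in (\ref{32}). Writing its characteristic polynomial as $\lambda^3+a_2(k)\lambda^2+a_1(k)\lambda+a_0(k)$, I would read off $a_2=H_1+H_2+H_3$ (the negative trace), $a_1=H_1H_2+H_1H_3+H_2H_3+\beta_{31}\bar w\,P+\beta_{32}\bar w\,Q$, and $a_0=H_1H_2H_3+\beta_{31}\bar w\,H_2 P+\beta_{32}\bar w\,H_1 Q$, where $P=\chi\bar u\phi(\bar w)(\frac{k\pi}{L})^2+\beta_1\bar u$ and $Q=\xi\bar v\phi(\bar w)(\frac{k\pi}{L})^2+\beta_2\bar v$ are the two predator--prey couplings appearing in the $(1,3)$ and $(2,3)$ entries.

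By the Routh--Hurwitz criterion for a cubic, all three eigenvalues of $M_k$ have negative real part if and only if $a_2>0$, $a_0>0$, and $a_2a_1-a_0>0$ (the latter two then forcing $a_1>0$). Since every $H_i>0$, the trace condition $a_2>0$ holds automatically for all $k$, so mode $k$ is stable exactly when $a_0>0$ and $a_2a_1-a_0>0$. The crucial observation is that both $a_0$ and $a_2a_1-a_0$ are \emph{affine} in $\chi$, with $\chi$-coefficients $\beta_{31}\bar u\bar w\,H_2\,\phi(\bar w)(\frac{k\pi}{L})^2$ and $\beta_{31}\bar u\bar w\,(H_1+H_3)\,\phi(\bar w)(\frac{k\pi}{L})^2$ respectively. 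Under the group-defense hypothesis $\phi(\bar w)<0$ these coefficients are strictly negative for every $k\geq1$, so both quantities strictly decrease in $\chi$. Solving $a_0=0$ for $\chi$ then produces exactly the threshold $\chi_k^S$ of (\ref{34}), while solving $a_2a_1-a_0=0$ produces $\chi_k^H$ of (\ref{35}); hence $a_0>0\iff\chi<\chi_k^S$ and $a_2a_1-a_0>0\iff\chi<\chi_k^H$, i.e. mode $k$ is stable iff $\chi<\min\{\chi_k^S,\chi_k^H\}$.

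It remains to assemble the modes and dispose of $k=0$. For $k=0$ one has $(\frac{k\pi}{L})^2=0$, so the $\chi$- and $\xi$-dependent terms drop out of both $a_0$ and $a_2a_1-a_0$, leaving strictly positive expressions; thus the homogeneous mode (the reaction kinetics) is stable for every $\chi>0$ and never triggers instability. Therefore the equilibrium is linearly stable precisely when $\chi<\min\{\chi_k^S,\chi_k^H\}$ holds for all $k\geq1$ simultaneously, which is exactly $\chi<\chi_0$ with $\chi_0$ as in (\ref{33}). Conversely, if $\chi>\chi_0$ then for the minimizing mode $k^*$ either $a_0(k^*)<0$ or $(a_2a_1-a_0)(k^*)<0$, so the Routh--Hurwitz conditions fail and $M_{k^*}$ has an eigenvalue with positive real part (a real positive root when $a_0<0$, since $p(0)=a_0<0<p(+\infty)$; a pair crossing into the right half-plane when the Hurwitz quantity is negative), giving instability. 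I would also record that as $k\to\infty$ both $\chi_k^S$ and $\chi_k^H$ grow like $(\frac{k\pi}{L})^2\to+\infty$, because the cubic numerators $H_1H_2H_3$ dominate the linear-in-$(\frac{k\pi}{L})^2$ denominators, so the infimum in (\ref{33}) is attained at a finite mode and $\chi_0$ is well defined.

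The main obstacle is bookkeeping rather than conceptual: matching the solved forms of $a_0=0$ and $a_2a_1-a_0=0$ to the stated closed expressions (\ref{34})--(\ref{35}) requires carefully expanding $(H_1+H_2+H_3)(H_1H_2+H_1H_3+H_2H_3)$, cancelling the common $H_1H_2H_3$ in $a_2a_1-a_0$, and checking that the sign of the $\chi$-coefficient hinges exactly on $\phi(\bar w)<0$. Everything else is a direct application of the Routh--Hurwitz equivalence together with the monotonicity in $\chi$, so the strict instability for $\chi>\chi_0$ follows without any separate transversality computation.
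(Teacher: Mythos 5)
Your proposal is correct and follows essentially the same route as the paper's proof: cosine-mode decomposition to the stability matrix (\ref{32}), the reduced Routh--Hurwitz conditions $\eta_0>0$ and $\eta_1\eta_2-\eta_0>0$ (with $\eta_2>0$ automatic and $\eta_1>0$ implied), and the observation that both quantities are affine and strictly decreasing in $\chi$ when $\phi(\bar w)<0$, yielding the thresholds (\ref{34})--(\ref{35}). Your explicit treatment of the $k=0$ mode and the check that $\chi_k^S,\chi_k^H\to\infty$ as $k\to\infty$ (so the minimum in (\ref{33}) is attained) are small refinements the paper leaves implicit, but they do not change the argument.
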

\begin{proof}
The characteristic equation for an eigenvalue $\sigma$ of the stability matrix (\ref{32}) is
\begin{align*}
\sigma^3+\eta_2(\chi,k)\sigma^2+\eta_1(\chi,k)\sigma+\eta_0(\chi,k)=0,
\end{align*}
where
{\small\begin{align*}
\eta_2(\chi,k)=&(d_1+d_2+d_3)(\frac{k\pi}{L})^2+{\alpha_1}\bar{u}+{\alpha_2}\bar{v}+{\alpha_3}\bar{w}>0,\\
\eta_1(\chi,k)=&\Big(d_1(\frac{k\pi}{L})^2+{\alpha_1}\bar{u} \Big)\Big(d_2(\frac{k\pi}{L})^2+{\alpha_2}\bar{v} \Big)+\Big(d_1(\frac{k\pi}{L})^2+{\alpha_1}\bar{u} \Big)\Big(d_3(\frac{k\pi}{L})^2+{\alpha_3}\bar{w} \Big)\\
&+\Big(d_2(\frac{k\pi}{L})^2+{\alpha_2}\bar{v} \Big)\Big(d_3(\frac{k\pi}{L})^2+{\alpha_3}\bar{w} \Big)+(\beta_{31}\beta_1 \bar{u}+\beta_{32}\beta_2 \bar{v})\bar{w}\\
&+(\beta_{31}\bar{u}\chi+\beta_{32}\bar{v}\xi)\bar{w}\phi(\bar{w})(\frac{k\pi}{L})^2,
\end{align*}}and
\begin{align*}
\eta_0(\chi,k)=&\Big(d_1(\frac{k\pi}{L})^2+{\alpha_1}\bar{u} \Big)\Big(d_2(\frac{k\pi}{L})^2+{\alpha_2}\bar{v} \Big)\Big(d_3(\frac{k\pi}{L})^2+{\alpha_3}\bar{w} \Big)+\Big(d_2(\frac{k\pi}{L})^2\\
&+{\alpha_2}\bar{v} \Big)\beta_{31}\beta_1\bar{u}\bar{w}+\Big(d_1(\frac{k\pi}{L})^2+{\alpha_1}\bar{u} \Big)\beta_2\beta_{32}\bar{v}\bar{w}+\Big(d_2(\frac{k\pi}{L})^2+{\alpha_2}\bar{v} \Big)\\
&\beta_{31}\bar{u}\bar{w}\phi(\bar{w})(\frac{k\pi}{L})^2\chi+\Big(d_1(\frac{k\pi}{L})^2+{\alpha_1}\bar{u} \Big)\beta_{32}\bar{v}\bar{w}\phi(\bar{w})(\frac{k\pi}{L})^2\xi.
\end{align*}
By the principle of the linearized stability (Theorem 5.2 in \cite{Si} e.g.), $(\bar{u},\bar{v},\bar{w})$ is asymptotically stable with respect to (\ref{31}) if and only if all eigenvalues of the matrix (\ref{32}) have negative real part, then according to the Routh--Hurwitz conditions, or Corollary 2.2 in \cite{LSW}, the constant equilibrium $(\bar{u},\bar{v},\bar{w})$ is locally asymptotically stable with respect to (\ref{11}) if and only if the following conditions hold for each $k\in\mathbb{N}^+$
\begin{align*}
\eta_0(\chi,k)>0, \eta_1(\chi,k)>0,~\text{and}~ \eta_1(\chi,k)\eta_2(\chi,k)-\eta_0(\chi,k)>0,
\end{align*}
while $(\bar{u},\bar{v},\bar{w})$ is unstable if one of the conditions above fails for some $k\in\mathbb{N}^+$.  Note that we always have that $\eta_2(\chi,k)>0$ for each $k\in\mathbb N^+$; moreover, $\eta_1(\chi,k)>0$ if $\eta_0(\chi,k)>0$ and $\eta_1(\chi,k)\eta_2(\chi,k)-\eta_0(\chi,k)>0$, therefore $(\bar{u},\bar{v},\bar{w})$ is unstable if there exists $k\in\mathbb{N}^+$ such that either $\eta_0(\chi,k)<0$ or $\eta_1(\chi,k)\eta_2(\chi,k)-\eta_0(\chi,k)<0$.  On the other hand, since $\phi(\bar w)<0$, it follows from straightforward calculations that
\[\eta_0(\chi,k)<0 \text{ if and only if }\chi>\chi^S_k,\] and
\[\eta_1(\chi,k)\eta_2(\chi,k)-\eta_0(\chi,k)<0 \text{ if and only if }\chi>\chi^H_k,\]
therefore the constant solution $(\bar{u},\bar{v},\bar{w})$ is unstable if there exists $k\in\mathbb N^+$ such that $\chi>\chi^S_k$ or $\chi>\chi^H_k$, i.e., if $\chi$ is larger than the minimum of $\chi^S_k$ and $\chi^H_k$ over $k\in\mathbb N$.  Similarly we can show that $(\bar{u},\bar{v},\bar{w})$ is locally asymptotically stable if $\chi<\chi_0$.  This finishes the proof.
\end{proof}
According to Proposition \ref{proposition31}, $(\bar{u},\bar{v},\bar{w})$ becomes unstable as $\chi$ surpasses the threshold value $\chi_0$ if $\phi(\bar{w})<0$ which we shall assume from now on.  We would like to point out that biologically $\phi(\bar{w})<0$ describes situation that a huge amount of preys can aggregate to form group defense and keep predators away from the habitat when the prey population density surpasses $\bar w$.  This amounts to a switch from prey--attraction to prey--repulsion in the predation.  See \cite{AM,Hamilton,GMJW,XR} and the references therein for the detailed description of prey group defense.

We will see in our coming mathematical analysis and numerical simulations that $(\bar u,\bar v,\bar w)$ loses its stability to time--periodic patterns through Hopf bifurcation if $\chi=\chi^H_k$ and to stationary patterns through steady state bifurcation if $\chi=\chi^S_k$.  Therefore we have used the indices $H$ and $S$ to denote Hopf and steady state bifurcation respectively.  Here and in the rest part of this paper, we study the effect of prey--taxis on the formation of nontrivial patterns to (\ref{31}).  Without losing the generality of our analysis, we treat $\chi$ as the variable parameter and fix all the rest parameters, while similarly, we have that Proposition \ref{proposition31} also holds for $\xi>\xi_0=\min_{k\in\mathbb N^+}\{\xi_k^H,\xi_k^S\}$, where $\xi_k^H$ and $\xi_k^S$ are functions of $\chi$.
\begin{corollary}\label{Corollary1}
Suppose that $\phi(\bar w)>0$ and all the rest conditions in Proposition \ref{proposition31} are satisfied, the homogeneous equilibrium $(\bar u,\bar v,\bar w)$ is locally asymptotically stable if $\chi>\bar \chi_0$ and it is unstable if $\chi<\bar \chi_0$, where
\[\bar \chi_0=\max_{k\in\mathbb N}\{\chi^S_k,\chi^H_k\}.\]
\end{corollary}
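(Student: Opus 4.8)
The plan is to re-use the Routh--Hurwitz machinery from the proof of Proposition \ref{proposition31} essentially verbatim, the only substantive change being the direction of two inequalities once the sign of $\phi(\bar w)$ is reversed. First I would recall that the characteristic polynomial $\sigma^3+\eta_2\sigma^2+\eta_1\sigma+\eta_0$ of the matrix (\ref{32}), together with the facts that $\eta_2(\chi,k)>0$ for every $k$ and that $\eta_1(\chi,k)>0$ whenever $\eta_0(\chi,k)>0$ and $\eta_1\eta_2-\eta_0>0$, are all established without any assumption on the sign of $\phi(\bar w)$. Hence the stability of $(\bar u,\bar v,\bar w)$ is again equivalent to requiring $\eta_0(\chi,k)>0$ and $\eta_1(\chi,k)\eta_2(\chi,k)-\eta_0(\chi,k)>0$ for all $k\in\mathbb N^+$, and instability to the failure of one of these for some $k$.

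The key step is a monotonicity observation. Both $\eta_0(\chi,k)$ and $\eta_1(\chi,k)$ are affine in $\chi$; inspecting the explicit expressions, the coefficient of $\chi$ in $\eta_0$ is $H_2\,\beta_{31}\bar u\bar w\phi(\bar w)(\tfrac{k\pi}{L})^2$, and since $\eta_2$ is independent of $\chi$, the coefficient of $\chi$ in $\eta_1\eta_2-\eta_0$ is $\eta_2$ times the $\chi$--coefficient of $\eta_1$ minus that of $\eta_0$, which, using $\eta_2=H_1+H_2+H_3$ so that $\eta_2-H_2=H_1+H_3$, simplifies to $(H_1+H_3)\beta_{31}\bar u\bar w\phi(\bar w)(\tfrac{k\pi}{L})^2$. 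Under the present hypothesis $\phi(\bar w)>0$ both of these coefficients are strictly positive, so $\eta_0(\cdot,k)$ and $(\eta_1\eta_2-\eta_0)(\cdot,k)$ are strictly \emph{increasing} in $\chi$ --- exactly the opposite monotonicity from the $\phi(\bar w)<0$ case. Because the unique roots of these affine functions are given by the same formulas (\ref{34}) and (\ref{35}) (solving $\eta_0=0$ and $\eta_1\eta_2-\eta_0=0$ for $\chi$ never uses the sign of $\phi(\bar w)$), I obtain $\eta_0(\chi,k)<0\iff\chi<\chi^S_k$ and $\eta_1\eta_2-\eta_0<0\iff\chi<\chi^H_k$.

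With the inequalities thus reversed, the conclusion follows exactly as in Proposition \ref{proposition31}: the equilibrium $(\bar u,\bar v,\bar w)$ is unstable precisely when some mode $k$ satisfies $\chi<\chi^S_k$ or $\chi<\chi^H_k$, i.e. when $\chi<\max_{k}\{\chi^S_k,\chi^H_k\}=\bar\chi_0$, and it is locally asymptotically stable when $\chi>\bar\chi_0$. I expect the only genuine subtlety --- rather than obstacle --- to be confirming that $\bar\chi_0$ is attained as a finite maximum; here the asymptotics $H_i\sim d_i(\tfrac{k\pi}{L})^2$ show that $\chi^S_k,\chi^H_k\to-\infty$ as $k\to\infty$ when $\phi(\bar w)>0$, so the maximum is governed by finitely many low modes and is well defined. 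The remainder is bookkeeping identical to the preceding proof.
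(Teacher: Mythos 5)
Your proposal is correct and follows essentially the same route the paper intends: the corollary is stated without a separate proof precisely because, as you show, the Routh--Hurwitz argument of Proposition \ref{proposition31} carries over verbatim once the sign $\phi(\bar w)>0$ reverses the monotonicity of $\eta_0(\cdot,k)$ and $(\eta_1\eta_2-\eta_0)(\cdot,k)$ in $\chi$, turning the instability conditions into $\chi<\chi^S_k$ or $\chi<\chi^H_k$. Your additional check that $\chi^S_k,\chi^H_k\to-\infty$ as $k\to\infty$, so that $\bar\chi_0$ is a finite maximum (indeed negative, as the paper remarks after the corollary), is a worthwhile detail the paper leaves implicit.
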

When $\phi(\bar w)>0$, both $\chi^S_k$ and $\chi^H_k$ are negative for any $k\in\mathbb N^+$ hence $\bar \chi_0<0$, therefore $(\bar u,\bar v,\bar w)$ is always stable according to Corollary \ref{Corollary1} since $\chi>0$.  This result corresponds to the widely held belief (\cite{LHL2} e.g.) that prey--taxis stabilizes homogeneous equilibrium and inhibits the formation of spatial patterns for one--predator and one--prey system.  It states that the same holds true for two--predator and one--prey model as long as $\phi(\bar w)>0$.  However, if $\phi(\bar w)<0$, the prey--taxis destabilizes homogeneous equilibrium which becomes unstable as $\chi$ surpasses $\chi_0$.  Therefore, in order to investigate the formation of nontrivial patterns in (\ref{31}), we shall assume that $\phi(\bar w)<0$ in the coming analysis.

\begin{remark}\label{remark31}
As we shall see in our coming analysis, the occurrence of Hopf or steady state bifurcation at $(\bar u,\bar v,\bar w)$ depends on whether $\chi_{0}$ in (\ref{33}) is achieved at $\min_{k\in \mathbb{N}^+}\chi_{k}^{S}$ or $\min_{k\in \mathbb{N}^+}\chi_{k}^{H}$.  We divide our discussions into the following cases:
\begin{enumerate}[(1).]
\item If $\chi_{0}=\chi_{k_{0}}^{S}<\min_{k\in \mathbb{N}^+}\chi_{k}^{H}$,  then $\eta_{0}(\chi_0,k_0)=0$ and the eigenvalues of (\ref{32}) at $\chi_0$ are
\[\sigma_{1}^{S}(\chi_0,k_0)=0, \sigma_{2,3}^{S}(\chi_0,k_0)=\frac{-\eta_2(\chi_0,k_0)\pm\sqrt{\eta_2^2(\chi_0,k_0)-4\eta_1(\chi_0,k_0)}}{2}.\]
Since $(\bar{u},\bar{v},\bar{w})$ is unstable for all $\chi>\chi_{0}$, we know that (\ref{32}) has at least one eigenvalue with positive real part when $\chi=\chi^S_k, k\neq k_{0}$. This fact will be applied in the proof of the stability of steady state bifurcating solutions to (\ref{31}) around $(\bar{u}, \bar{v}, \bar{w}, \chi_{k}^{S})$.  In particular, it implies that the only stable bifurcating solutions must be on the branch around $(\bar{u}, \bar{v}, \bar{w}, \chi^S_{k_0})$ that turns to the right while all the rest branches are always unstable.  Moreover, Hopf bifurcation does not occur at $(\bar{u}, \bar{v}, \bar{w})$ in this case.
\item If $\chi_{0}=\chi_{k_{1}}^{H}<\min_{k\in \mathbb{N}^+}\chi_{k}^{S}$, then $\eta_{0}(\chi_0,k_1)=\eta_1(\chi_0,k_1)\eta_{2}(\chi_0,k_1)$ and the eigenvalues of (\ref{32}) at $\chi_0$ are
    \[\sigma_{1}^{H}(\chi_0,k_1)=-\eta_2(\chi_0,k_1)<0, \sigma_{2,3}^{H}(\chi_0,k_1)=\pm\sqrt{-\eta_1(\chi_0,k_1)}.\]
      Similarly we can show that the steady state bifurcating solutions around $(\bar{u}, \bar{v},\break \bar{w}, \chi_{k}^{S})$ are always unstable for all $k\in \mathbb{N}^+$.  Moreover we see that $\eta_1(\chi_0,k_{1})>0$ and (\ref{32}) has three eigenvalues: $\sigma_{1}^{H}(\chi_0,k_1)=-\eta_{2}(\chi_0,k_1), \sigma_{2,3}^{H}(\chi_0,k_1)=\pm\sqrt{\eta_1(\chi_0,k_1)}i$.  This indicates the possibility of a Hopf bifurcation and the emergence of time-periodic spatial patterns in (\ref{31}) when $\chi=\chi^H_{k_{1}}$. To prove this claim, we argue by contradiction and assume that $\eta_1(\chi_0,k_{1})<0$, therefore $\sigma_{2}^{H}(\chi_0,k_{1})=\sqrt{-\eta_1(\chi_0,k_1)}>0$ and Re$(\sigma_{2})>0$ if $\chi$ is sightly smaller than $\chi_{k_{1}}^{H}$, however, this indicates that $(\bar{u}, \bar{v},\bar{w})$ is unstable for $\chi<\chi_{k_{1}}^{H}=\chi_{0}$, which is a contradiction. Our numerical simulations in Section \ref{section6} support the existence of Hopf bifurcation and time--periodic patterns to (\ref{31}).
  \item If $\chi_{0}=\chi_{k_{0}}^{S}=\chi_{k_{1}}^{H}$. In this case, (\ref{32}) has three eigenvalues $\sigma_{1}=-\eta_2(\chi,k)<0, \sigma_{2}=\sigma_{3}=0$, and linear stability of the $(\bar{u}, \bar{v}, \bar{w})$ is lost since there are two zero eigenvalues.  This inhibits the application of our steady state and Hopf bifurcation analysis which requires the null space of (\ref{32}) to be one-dimensional. Therefore we assume that $\chi_{k}^{S}\neq\chi_{k}^{H},~\forall k\in\mathbb{N}^+$ in our bifurcation analysis.
\end{enumerate}
\end{remark}
In general, it is not obvious to determine when case (1) or case (2) in Remark \ref{remark31} occurs.  However, if the interval length $L$ is sufficiently small, we have
\[\chi^S_k\approx -\frac{d_1d_3}{\beta_{31}\bar u\bar w \phi(\bar w)}\Big(\frac{k\pi}{L}\Big)^2<-\frac{d_1d_3+d_2(d_1+d_3)+d_2^2}{\beta_{31}\bar u\bar w \phi(\bar w)}\Big(\frac{k\pi}{L}\Big)^2\approx\chi^H_k,k\in\mathbb N^+,\]
since $\phi(\bar w)<0$.  This implies that for small intervals, $\chi_0=\min_{k\in\mathbb N^+} \chi^S_k=\chi^S_1$ and $(\bar u,\bar v,\bar w)$ loses its stability to the steady state bifurcating solution as $\chi$ surpasses $\chi_0$.  Since $k_0=1$, the bifurcating solution has a stable wave mode $\cos \frac{\pi x}{L}$ which is monotone in $x$.  Moreover, we shall observe from our numerics that $k_0$ is increasing or non-decreasing in $L$.  Therefore, small domain only supports monotone stable solutions, while large interval supports non-monotone solutions, at least when $\chi$ is around $\chi_0$.  Actually, if $(u(x),v(x),w(x))$ is an increasing solution to (\ref{31}), $(u(L-x),v(L-x),w(L-x))$ is a decreasing solution, then one can construct non-monotone solutions to (\ref{31}) over $(0,2L)$, $(0,3L)$,...by reflecting and periodically extending the monotone ones at $x=L$, $2L$, $3L$,...

\section{Nonconstant positive steady states}\label{section4}
This section is devoted to studying nonconstant positive steady states to system (\ref{31}), i.e., nonconstant solutions to the following system
\begin{equation}\label{41}
\left\{
\begin{array}{ll}
(d_1 u' -\chi u\phi(w)  w')'+\alpha_1(1-u)u+\beta_1uw=0,&x \in (0,L),\\
(d_2 v' -\xi v\phi(w)  w')'+\alpha_2(1-v)v+\beta_2vw=0,&x \in (0,L),\\
d_3 w''+\alpha_3(1-w)w-\beta_{31}uw-\beta_{32}vw=0,&   x \in (0,L),      \\
u'(x)=v'(x)=w'(x)=0,&x=0,L,\\
\end{array}
\right.
\end{equation}
where $u$, $v$ and $w$ are functions of $x$ and all the parameters are the same as those in (\ref{31}).  We assume that $\alpha_3>\beta_{31}+\beta_{32}$ so that $(\bar u, \bar v,\bar w)$ is the unique positive equilibrium to (\ref{41}).

In order to look for nonconstant positive solutions to (\ref{41}), we shall perform steady state bifurcation analysis at $(\bar u, \bar v,\bar w)$.  When $\phi(\bar w)<0$, we already know that prey--taxis $\chi$ destabilizes $(\bar{u},\bar{v},\bar{w})$ which becomes unstable when $\chi$ surpasses $\chi_0$, therefore we are concerned with the conditions under which the spatially inhomogeneous solutions emerge through bifurcation as $\chi$ increases.  We refer these as prey--taxis induced patterns in analogy to Turing's instability.

\subsection{Steady state bifurcation}
To apply the bifurcation theory of Crandall--Rabinowitz \cite{CR,CR2} with $\chi$ being the bifurcation parameter, we introduce the spaces
\[\mathcal{X}=\{w\in H^2(0,L)\vert w'(0)=w'(L)=0\}, \mathcal{Y}=L^2(0,L)\]
and convert (\ref{41}) into the following abstract equation
\[\mathcal{F}(u,v,w,\chi)=0, ~(u,v,w,\chi)\in \mathcal{X}\times\mathcal{X}\times\mathcal{X}\times\mathbb{R},\]
where
\begin{align}\label{42}
\mathcal{F}(u,v,w,\chi)=
\begin{pmatrix}
(d_1 u' -\chi u\phi(w)  w')'+\alpha_1(1-u)u+\beta_1uw\\
(d_2 v' -\xi v\phi(w)  w')'+\alpha_2(1-v)v+\beta_2vw\\
d_3 w''+\alpha_3(1-w)w-\beta_{31}uw-\beta_{32}vw
\end{pmatrix}.
\end{align}
It is easy to see that $\mathcal{F}(\bar{u},\bar{v},\bar{w},\chi)=0$ for any $\chi\in\mathbb{R}$ and $\mathcal{F}:\mathcal{X}\times\mathcal{X}\times\mathcal{X}\times\mathbb{R}\rightarrow \mathcal{Y}\times\mathcal{Y}\times\mathcal{Y}$ is analytic.  Moreover, for any fixed $(\hat u,\hat v,\hat w)\in\mathcal{X}\times\mathcal{X}\times\mathcal{X}$, the Fr\'{e}chet derivative of $\mathcal F$ is given by
\begin{align}\label{43}
&D\mathcal{F}(\hat u,\hat v,\hat w,\chi)(u,v,w) \nonumber\\
=&
\begin{pmatrix}
d_1 u''-\chi(u\phi(\hat w)\hat w' + \hat uw\phi'(\hat w)\hat w' + \hat u\phi(\hat w)w')'+D\mathcal{F}_1\\
d_2 v''-\xi(v\phi(\hat w)\hat w' + \hat vw\phi'(\hat w)\hat w' + \hat v\phi(\hat w)w')'+D\mathcal{F}_2\\
d_3 w''-\beta_{31}\hat wu-\beta_{32}\hat wv+(\alpha_3-2\alpha_3 \hat w-\beta_{31}\hat u-\beta_{32}\hat v)w
\end{pmatrix},
\end{align}
where $D\mathcal{F}_1=(\alpha_1-2\alpha_1 \hat u+\beta_1 \hat w)u+\beta_1 \hat u w $ and $D\mathcal{F}_2=(\alpha_2-2\alpha_2 \hat v+\beta_2 \hat w)v+\beta_2 \hat v w $.  We collect the following facts about $\mathcal F$.
\begin{lemma}\label{lemma41}
$D\mathcal{F}(\hat u,\hat v,\hat w,\chi)(u,v,w):\mathcal{X}\times\mathcal{X}\times\mathcal{X}\times\mathbb{R}\rightarrow \mathcal{Y}\times\mathcal{Y}\times\mathcal{Y}$ is a Fredholm operator with zero index.
\end{lemma}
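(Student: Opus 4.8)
The plan is to write the linear operator in (\ref{43}) as a sum $D\mathcal{F}(\hat u,\hat v,\hat w,\chi)=\mathcal{L}+\mathcal{K}$, where $\mathcal{L}:\mathcal{X}\times\mathcal{X}\times\mathcal{X}\to\mathcal{Y}\times\mathcal{Y}\times\mathcal{Y}$ is a linear isomorphism carrying all of the second--order structure, and $\mathcal{K}:\mathcal{X}\times\mathcal{X}\times\mathcal{X}\to\mathcal{Y}\times\mathcal{Y}\times\mathcal{Y}$ is compact. Granting such a decomposition, the conclusion is immediate: an isomorphism is a Fredholm operator of index zero, and the Fredholm index is invariant under compact perturbations, so $D\mathcal{F}=\mathcal{L}+\mathcal{K}$ is Fredholm with zero index.

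The decomposition must be chosen with care, because the principal part of (\ref{43}) is \emph{not} diagonal: expanding the divergence form $-\chi(\hat u\phi(\hat w)w')'$ produces a genuine second--order cross term $-\chi\hat u\phi(\hat w)w''$ coupling the first component to $w$, and likewise $-\xi\hat v\phi(\hat w)w''$ in the second. If one naively took $\mathcal{L}=\mathrm{diag}(d_1\partial_{xx},d_2\partial_{xx},d_3\partial_{xx})$, the remainder would still contain these $w''$ terms and fail to be compact. The fix is to absorb them into $\mathcal{L}$, exploiting the upper--triangular structure of the principal symbol. Concretely, I set
\[
\mathcal{L}(u,v,w)=
\begin{pmatrix}
d_1u''-\chi\hat u\phi(\hat w)w''-u\\
d_2v''-\xi\hat v\phi(\hat w)w''-v\\
d_3w''-w
\end{pmatrix}.
\]
To see $\mathcal{L}$ is an isomorphism, I would solve $\mathcal{L}(u,v,w)=(g_1,g_2,g_3)$ in triangular fashion: the third equation $d_3w''-w=g_3$ with Neumann conditions determines $w\in\mathcal{X}$ uniquely, since $d_3\partial_{xx}-1:\mathcal{X}\to\mathcal{Y}$ is boundedly invertible; then $w''\in\mathcal{Y}$ is known, and because $\hat u\phi(\hat w)\in L^\infty$ the first equation reads $d_1u''-u=g_1+\chi\hat u\phi(\hat w)w''\in\mathcal{Y}$, determining $u\in\mathcal{X}$ uniquely, and similarly for $v$. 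Boundedness of $\mathcal{L}^{-1}$ then follows from that of each $(d_i\partial_{xx}-1)^{-1}$ together with the continuity of $w''\mapsto\hat u\phi(\hat w)w''$ on $\mathcal{Y}$, or equivalently from the open mapping theorem.

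It then remains to verify that $\mathcal{K}=D\mathcal{F}-\mathcal{L}$ is compact. By construction the $d_iu''$, $d_iv''$, $d_3w''$ and the cross $w''$ terms cancel exactly, so $\mathcal{K}$ is at most a \emph{first--order} differential operator in $(u,v,w)$; its coefficients are built from $\hat u,\hat v,\hat w$ and from $\phi,\phi',\phi''$, which lie in $L^\infty(0,L)$ since $\mathcal{X}=\{w\in H^2\mid w'(0)=w'(L)=0\}\hookrightarrow C^1([0,L])$ and $\phi\in C^3$, except for the zeroth--order coefficients of type $(\phi(\hat w)\hat w')'=\phi'(\hat w)(\hat w')^2+\phi(\hat w)\hat w''$, which involve $\hat w''\in\mathcal{Y}=L^2(0,L)$. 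Compactness now follows from the compact embedding $\mathcal{X}=H^2(0,L)\hookrightarrow\hookrightarrow C^1([0,L])$: the map $(u,v,w)\mapsto(u,u',v,v',w,w')$ is compact from $\mathcal{X}^3$ into $C([0,L])^6$, while multiplying by the $L^\infty$ or $L^2$ coefficients and summing is a bounded linear map into $\mathcal{Y}^3$, and a bounded operator composed with a compact one is compact.

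I expect the main obstacle to be the bookkeeping that isolates the second--order coupling and confirms it is captured entirely by $\mathcal{L}$; once the cancellation of the $w''$ terms is verified, the reduction of $\mathcal{K}$ to first order and its compactness via $H^2\hookrightarrow\hookrightarrow C^1$ are routine. A minor technical point worth watching is that some coefficients in $\mathcal{K}$ are only $L^2$ rather than $L^\infty$ (because $\hat w''\in L^2$), which is precisely why I phrase compactness through the compact embedding into $C^1$ followed by bounded multiplication into $L^2$, instead of through an $H^2\to H^1$ mapping property. As an alternative one could argue directly from elliptic theory by deforming the upper--triangular principal matrix to $\mathrm{diag}(d_1,d_2,d_3)$ and invoking homotopy invariance of the index, but the explicit splitting above is more self--contained.
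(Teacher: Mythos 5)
Your proof is correct, but it takes a genuinely different route from the paper's. The paper disposes of the lemma by citation: it writes the linearization as $\mathcal{A}_0(\mathbf{u})\mathbf{u}''+\mathbf{F}_0(x,\mathbf{u},\mathbf{u}')$ with the upper--triangular matrix $\mathcal{A}_0=\bigl(\begin{smallmatrix} d_1 & 0 & -\chi\hat u\phi(\hat w)\\ 0 & d_2 & -\xi\hat v\phi(\hat w)\\ 0 & 0 & d_3\end{smallmatrix}\bigr)$, observes that the operator is elliptic since all eigenvalues of $\mathcal{A}_0$ are positive, checks Agmon's condition via Remark 2.5 of Shi--Wang \cite{SW}, and then invokes their Theorem 3.3 and Remark 3.4 to conclude Fredholmness with zero index. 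You instead prove the statement from scratch as a compact perturbation of an isomorphism, and your two points of care are exactly the right ones: the second--order coupling terms $-\chi\hat u\phi(\hat w)w''$ and $-\xi\hat v\phi(\hat w)w''$ must be absorbed into the invertible part $\mathcal{L}$ (your triangular back--substitution starting from $d_3w''-w=g_3$ is valid, since each $d_i\partial_{xx}-1$ with Neumann conditions is boundedly invertible from $\mathcal{X}$ to $\mathcal{Y}$ and the coefficients $\hat u\phi(\hat w)$, $\hat v\phi(\hat w)$ are continuous because $H^2(0,L)\hookrightarrow C^1([0,L])$); and the zeroth--order coefficients of type $\phi'(\hat w)(\hat w')^2+\phi(\hat w)\hat w''$ are only in $L^2$, which you correctly handle by routing compactness through $\mathcal{X}^3\hookrightarrow\hookrightarrow C^1([0,L])^3$ followed by bounded multiplication into $\mathcal{Y}^3$, rather than through an $H^2\to H^1$ mapping property. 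What each approach buys: yours is elementary and self--contained, at the price of being tied to one space dimension (the embedding $H^2\hookrightarrow C^1$ and your multiplication estimates degrade for $N\geq 2$, whereas the Shi--Wang framework the paper cites applies to elliptic systems on bounded domains in any dimension); the paper's citation argument is shorter and, as a side benefit, places the problem inside the Shi--Wang global bifurcation framework, which the paper later alludes to when discussing global continua of $\Gamma_k(s)$ in Section \ref{section7}. Your closing alternative --- deforming the upper--triangular principal symbol to $\mathrm{diag}(d_1,d_2,d_3)$ and using homotopy invariance of the index --- is in fact close in spirit to the ellipticity verification the paper performs before citing \cite{SW}.
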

\begin{proof}
We denote $\mathbf{u}=(u,v,w)^T$ and rewrite (\ref{43}) as
\begin{align*}
D\mathcal{F}(\hat u,\hat v,\hat w,\chi)(u,v,w)=\mathcal{A}_0(\mathbf{u})\mathbf{u}''+\mathbf{F}_0(x,\mathbf{u},\mathbf{u}'),
\end{align*}
where
\begin{align*}
\mathcal{A}_0=
\begin{pmatrix}
d_1 & 0 & -\chi \hat u\phi(\hat w)\\
0 & d_2 & -\xi \hat v\phi(\hat w)\\
0 & 0 & d_3
\end{pmatrix}
\end{align*}
and
\begin{align*}
\mathbf{F}_0(x,\mathbf{u},\mathbf{u}')=
\begin{pmatrix}
-\chi(u\phi(\hat w)\hat w' + \hat uw\phi'(\hat w)\hat w')' + (\hat u\phi(\hat w))'w'+D\mathcal{F}_1\\
-\xi(v\phi(\hat w)\hat w' + \hat vw\phi'(\hat w)\hat w')' + (\hat v\phi(\hat w))'w'+D\mathcal{F}_2\\
-\beta_{31}\hat wu-\beta_{32}\hat wv+(\alpha_3-2\alpha_3 \hat w-\beta_{31}\hat u-\beta_{32}\hat v)w
\end{pmatrix},
\end{align*}
therefore operator (\ref{43}) is elliptic since all eigenvalues of $\mathcal{A}_0$ are positive. According to Remark 2.5 (case 2) in Shi and Wang \cite{SW} with $N=1$, $\mathcal{A}_0$ satisfies the Agmon's condition (see Theorem 4.4 of \cite{Am1} and Definition 2.4 in \cite{SW}).  Therefore, $D\mathcal{F}(\hat u,\hat v,\hat w,\chi)(u,v,w)$ is Fredholm with zero index due to Theorem 3.3 and Remark 3.4 of \cite{SW}.
\end{proof}

To seek non--trivial solutions of (\ref{41}) that bifurcate from equilibrium $(\bar{u},\bar{v},\bar{w})$, we first check the following necessary condition
\begin{align}\label{44}
\mathcal{N}(D\mathcal{F}(\bar{u},\bar{v},\bar{w},\chi) )\not= \{\textbf{0}\},
\end{align}
where $\mathcal N$ denotes the null space.  Taking $(\hat u, \hat v, \hat w)=(\bar{u},\bar{v},\bar{w})$ in (\ref{43}), we see that the null space in (\ref{44}) consists of solutions to the following problem
\begin{align}\label{45}
\begin{split}
\begin{cases}
d_1u''-\chi \bar{u}\phi(\bar{w})w''-\alpha_1 \bar{u} u+\beta_1\bar{u}w=0, &x\in(0,L),\\
d_2v''-\xi \bar{v}\phi(\bar{w})w''-\alpha_2 \bar{v} v+\beta_2\bar{v}w=0, &x\in(0,L),\\
d_3 w''-\beta_{31}\bar{w} u-\beta_{32}\bar{w} v-\alpha_3 \bar{w} w=0, &x\in(0,L),\\
u'(x)=v'(x)=w'(x)=0,&x=0,L.\\
\end{cases}
\end{split}
\end{align}
In order to verify (\ref{44}), we substitute the following eigen--expansions into (\ref{45})
\[u(x)=\sum_{k=0}^{\infty}t_k \cos{\frac{k\pi x}{L}}, v(x)=\sum_{k=0}^{\infty}s_k \cos{\frac{k\pi x}{L}}, w(x)=\sum_{k=0}^{\infty}r_k \cos{\frac{k\pi x}{L}},\]
$t_k$, $s_k$ and $r_k$ constants and collect
\begin{align}\label{46}
\begin{pmatrix}
-d_1 (\frac{k\pi}{L})^2-{\alpha_1}\bar{u} & 0 & \chi \bar{u} \phi(\bar{w}) (\frac{k\pi}{L})^2+\beta_1 \bar{u}\\
0 & -d_2 (\frac{k\pi}{L})^2-{\alpha_2}\bar{v} & \xi \bar{v} \phi(\bar{w}) (\frac{k\pi}{L})^2+\beta_2 \bar{v}\\
-\beta_{31}\bar{w} & -\beta_{32}\bar{w} & -d_3 (\frac{k\pi}{L})^2-{\alpha_3}\bar{w}
\end{pmatrix}
\begin{pmatrix}
t_k\\s_k\\r_k
\end{pmatrix}
=
\begin{pmatrix}
0\\0\\0
\end{pmatrix}.
\end{align}
$k=0$ can be easily ruled out since $\alpha_3>\beta_{31}+\beta_{32}$.  For each $k\in \mathbb N^+$, (\ref{45}) has nonzero solutions $(t_k,s_k,r_k)$ if and only if the coefficient matrix of (\ref{46}) is singular or equivalently
\begin{align}\label{47}
\chi=\chi^S_k=-\frac{\beta_{32}\bar{v}H_1}{\beta_{31}\bar{u}H_2}\xi-\frac{H_1H_2H_3+H_2\beta_{31}\beta_1\bar{u}\bar{w}+H_1\beta_{32}\beta_2\bar{v}\bar{w}}{H_2\beta_{31}\bar{u}\bar{w}\phi(\bar{w})(\frac{k\pi}{L})^2},
\end{align}
where $H_1,H_2$ and $H_3$ are given in Proposition \ref{proposition31}.  Note that $\chi_k^S$ in (\ref{47}) is the same as (\ref{34}). $\chi^S_k>0$ if and only if
\begin{align*}
\xi<-\frac{H_1H_2H_3+H_2\beta_{31}\beta_1\bar{u}\bar{w}+H_1\beta_{32}\beta_2\bar{v}\bar{w}}{H_1\beta_{32}\bar{v}\bar{w}\phi(\bar{w})(\frac{k\pi}{L})^2}.
\end{align*}
Condition (\ref{44}) is satisfied if $\chi=\chi^S_k$ and $\mathcal{N}(D\mathcal{F}(\bar{u},\bar{v},\bar{w},\chi^S_k) )=\text{span}{\{(\bar{u}_k,\bar{v}_k,\bar{w}_k) \}}$ which is of one dimension
\begin{align}\label{48}
\bar{u}_k=P_k \cos{\frac{k\pi x}{L}}, \bar{v}_k=Q_k \cos{\frac{k\pi x}{L}}, \bar{w}_k= \cos{\frac{k\pi x}{L}},
\end{align}
where
\begin{align}\label{49}
P_k=-\frac{\beta_{32}}{\beta_{31}}\frac{\xi \bar{v} \phi(\bar{w}) (\frac{k\pi}{L})^2+\beta_2 \bar{v}}{d_2 (\frac{k\pi}{L})^2+{\alpha_2}\bar{v}}-\frac{d_3 (\frac{k\pi}{L})^2+{\alpha_3}\bar{w}}{\beta_{31}\bar{w}},
\end{align}
and
\begin{align}\label{410}
Q_k=\frac{\xi \bar{v} \phi(\bar{w}) (\frac{k\pi}{L})^2+\beta_2 \bar{v}}{d_2 (\frac{k\pi}{L})^2+{\alpha_2}\bar{v}}.
\end{align}

Having the potential bifurcation value $\chi^S_k$ in (\ref{47}), we now prove in the following theorem that the steady state bifurcation occurs at $(\bar{u},\bar{v},\bar{w},\chi^S_k)$ for each $k\in\mathbb{N}^+$, which establishes existence of nonconstant positive solutions to (\ref{41}).
\begin{theorem}\label{theorem42}
Assume that $\alpha_3>\beta_{31}+\beta_{32}$ and $\phi(\bar{w})<0$.  Suppose that for positive integers $k,j\in\mathbb{N}^+$,
\begin{align}\label{411}
\chi^S_k\not=\chi^S_j, \forall k\not=j \text{~and~} \chi^S_k\not=\chi^H_k,\forall k\in\mathbb{N}^+,
\end{align}
where $\chi^S_k$ and $\chi^H_k$ are given by (\ref{34}) and (\ref{35}) respectively.  Then for each $k\in\mathbb{N}^+$, there exist a positive constant $\delta$ and a unique one--parameter curve $\Gamma_k(s)=\{(u_k(s,x),v_k(s,x),w_k(s,x),\chi_k(s)): s \in(-\delta,\delta)\}$ of spatially inhomogeneous solutions $(u,v,w,\chi)\in\mathcal{X}\times\mathcal{X}\times\mathcal{X}\times\mathbb R$ to (\ref{41}) that bifurcate from  $(\bar{u},\bar{v},\bar{w})$ at $\chi=\chi^S_k$.   Moreover, the solutions are smooth functions of $s$ such that
\begin{align}\label{412}
\chi_k(s)=\chi^S_k+O(s), s\in(-\delta,\delta),
\end{align}
and
\begin{align}\label{413}
(u_k(s,x),v_k(s,x),w_k(s,x))=(\bar{u},\bar{v},\bar{w})+s(\bar{u}_k,\bar{v}_k,\bar{w}_k)+\textbf{O}(s^2), s\in(-\delta,\delta),
\end{align}
where $(\bar{u}_k,\bar{v}_k,\bar{w}_k)$ is given by (\ref{48}) and $\textbf{O}(s^2)\in\mathcal{Z}$ is in the closed complement of $\mathcal{N}(D\mathcal{F}(\bar{u},\bar{v},\bar{w},\chi) )$ defined by
\begin{align}\label{414}
\mathcal{Z}=\Big\{(u,v,w)\in\mathcal{X}\times\mathcal{X}\times\mathcal{X}\Big\vert \int_{0}^{L}u\bar{u}_k+v\bar{v}_k+w\bar{w}_k dx=0 \Big\}.
\end{align}
\end{theorem}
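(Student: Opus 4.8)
The plan is to apply the Crandall--Rabinowitz local bifurcation theorem from a simple eigenvalue \cite{CR} to the abstract equation $\mathcal F(u,v,w,\chi)=0$ in \eqref{42}, treating $\chi$ as the bifurcation parameter. This requires verifying four hypotheses at the point $(\bar u,\bar v,\bar w,\chi^S_k)$: (i) $\mathcal F(\bar u,\bar v,\bar w,\chi)=0$ for all $\chi$, which is immediate; (ii) $L_k:=D\mathcal F(\bar u,\bar v,\bar w,\chi^S_k)$ is Fredholm of index zero, which is exactly Lemma \ref{lemma41}; (iii) the null space $\mathcal N(L_k)$ is one--dimensional; and (iv) the transversality condition $\partial_\chi D\mathcal F(\bar u,\bar v,\bar w,\chi^S_k)(\bar u_k,\bar v_k,\bar w_k)\notin\mathcal R(L_k)$. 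Hypothesis (iii) follows from the Fourier analysis already carried out in \eqref{46}--\eqref{410}: expanding in the cosine basis $\{\cos(m\pi x/L)\}_{m\ge0}$, the operator $L_k$ acts on the $m$--th mode through the matrix in \eqref{46} with $\chi=\chi^S_k$, call it $M_m$; the assumption $\chi^S_k\neq\chi^S_j$ for $j\neq k$ from \eqref{411} guarantees $\det M_m\neq0$ for all $m\neq k$, so only the $k$--th mode contributes, and since the top--left $2\times2$ block of $M_k$ is $\mathrm{diag}(-H_1,-H_2)$ with $H_1,H_2>0$, the singular matrix $M_k$ has rank exactly two, giving $\mathcal N(L_k)=\mathrm{span}\{(\bar u_k,\bar v_k,\bar w_k)\}$ as in \eqref{48}. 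The remaining hypothesis $\chi^S_k\neq\chi^H_k$ in \eqref{411} excludes the degenerate double--zero case of Remark \ref{remark31}(3).

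The substantive work is the transversality condition (iv). First I would compute $\partial_\chi D\mathcal F$; because $\bar w$ is constant, $\bar w'=0$, and differentiating \eqref{43} at $(\bar u,\bar v,\bar w)$ in $\chi$ leaves only the first component:
\[
\partial_\chi D\mathcal F(\bar u,\bar v,\bar w,\chi)(u,v,w)=\big(-\bar u\phi(\bar w)\,w'',\,0,\,0\big)^{T}.
\]
Evaluating on the kernel element with $\bar w_k=\cos(k\pi x/L)$ so that $\bar w_k''=-(k\pi/L)^2\bar w_k$, this is a pure $k$--mode vector with Fourier coefficient $\big(\bar u\phi(\bar w)(k\pi/L)^2,0,0\big)^{T}$. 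Since $M_m$ is invertible for $m\neq k$, the range of $L_k$ consists precisely of those $\mathbf g\in\mathcal Y^3$ whose $k$--th Fourier coefficient $\mathbf g_k$ lies in $\mathcal R(M_k)$; as $M_k$ has rank two, $\mathcal R(M_k)=\mathbf l^{\perp}$, where $\mathbf l=(l_1,l_2,l_3)^{T}$ spans the one--dimensional left null space of $M_k$. Thus transversality reduces to the single scalar inequality $l_1\,\bar u\phi(\bar w)(k\pi/L)^2\neq0$, i.e.\ to showing $l_1\neq0$.

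Verifying $l_1\neq0$ is the crux, and I expect it to be the only real obstacle. Writing out $\mathbf l^{T}M_k=0$, its first column gives $-H_1 l_1-\beta_{31}\bar w\,l_3=0$, hence $l_1=-(\beta_{31}\bar w/H_1)\,l_3$, and its second gives $l_2=-(\beta_{32}\bar w/H_2)\,l_3$. If $l_3=0$ then $l_1=l_2=0$ and $\mathbf l=\mathbf 0$, a contradiction; therefore $l_3\neq0$, and since $\beta_{31}>0$, $\bar w>0$ and $H_1=d_1(k\pi/L)^2+\alpha_1\bar u>0$, we obtain $l_1\neq0$. So transversality holds automatically, the nondegeneracy being inherited from the biologically natural fact that predator $u$ genuinely consumes the prey, $\beta_{31}>0$.

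With (i)--(iv) in place, Crandall--Rabinowitz yields a $\delta>0$ and a unique curve $\Gamma_k(s)$ of nontrivial solutions bifurcating from $(\bar u,\bar v,\bar w,\chi^S_k)$ of the form \eqref{412}--\eqref{413}, the higher--order remainder $\mathbf O(s^2)$ lying in the complement $\mathcal Z$ of \eqref{414}; smoothness, indeed analyticity, of $s\mapsto\Gamma_k(s)$ follows from the analyticity of $\mathcal F$. Finally, since $(\bar u,\bar v,\bar w)$ is strictly positive and solutions depend continuously on $s$ in $\mathcal X^3\hookrightarrow C(\bar\Omega)^3$, shrinking $\delta$ if necessary keeps $(u_k,v_k,w_k)$ positive, so the bifurcating solutions are genuine nonconstant positive steady states of \eqref{41}. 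Everything after the transversality computation is bookkeeping once Lemma \ref{lemma41} supplies the Fredholm structure.
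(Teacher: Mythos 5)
Your proposal is correct and follows the same overall route as the paper's proof: apply the Crandall--Rabinowitz theorem at $\chi=\chi^S_k$, with the Fredholm property supplied by Lemma \ref{lemma41}, the one--dimensional kernel read off from the mode--$k$ matrix in (\ref{46}) (hypothesis (\ref{411}) excluding kernel contributions from modes $j\neq k$; note that for the zero mode your appeal to (\ref{411}) does not literally apply, and one instead uses $\alpha_3>\beta_{31}+\beta_{32}$, as the paper remarks). The one place you genuinely diverge is the transversality condition (\ref{415}), and there your argument is tighter than the paper's. The paper argues by contradiction: assuming range membership, it projects onto the $k$-th cosine mode to obtain the linear system (\ref{417}) and concludes a contradiction ``because the coefficient matrix is singular'' --- but singularity alone does not preclude solvability of a linear system; what is actually needed is that the nonzero right--hand side, a multiple of $(1,0,0)^T$, fail to be orthogonal to the left null vector $\mathbf l$ of the singular matrix $M_k$. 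Your computation supplies exactly this missing step: from $\mathbf l^T M_k=0$ the first two columns give $l_1=-(\beta_{31}\bar w/H_1)\,l_3$ and $l_2=-(\beta_{32}\bar w/H_2)\,l_3$, so $l_3\neq0$ (else $\mathbf l=\mathbf 0$), and hence $l_1\neq0$ since $\beta_{31},\bar w,H_1>0$, which is equivalent to the paper's claimed contradiction. Your evaluation of $\partial_\chi D\mathcal F$ at the equilibrium (only the first component survives because $\bar w'=0$) also matches the right--hand side of the paper's system (\ref{416}). So your write--up both reproduces the paper's strategy and closes the small logical gap in its verification of (\ref{415}); the remaining assertions --- the expansions (\ref{412})--(\ref{413}), the complement $\mathcal Z$ in (\ref{414}), smoothness in $s$, and positivity for small $\delta$ --- are standard consequences of Theorem 1.7 of \cite{CR}, exactly as invoked in the paper.
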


\begin{proof}
All the necessary conditions except the following have been verified in order to apply the Crandall--Rabinowitz local theory in \cite{CR}
\begin{align}\label{415}
\frac{d}{d\chi}(D\mathcal{F}(\bar{u},\bar{v},\bar{w},\chi))(\bar{u}_k,\bar{v}_k,\bar{w}_k)\vert_{\chi=\chi^S_k}\not\in \mathcal{R} (D\mathcal{F}(\bar{u},\bar{v},\bar{w},\chi)),
\end{align}
where $ \mathcal{R} $ is the range of the operator.  We argue by contradiction and suppose that condition (\ref{415}) fails, then there exists a nontrivial solution $(u,v,w)$ that satisfies
\begin{align}\label{416}
\begin{split}
\begin{cases}
d_1u''-\chi^S_k\bar{u}\phi(\bar{w})w''-\alpha_1 \bar{u} u+\beta_1\bar{u}w=(\frac{k\pi}{L})^2\bar{u}\phi(\bar{w})\cos{\frac{k\pi x}{L}}, &x\in(0,L),\\
d_2v''-\xi \bar{v}\phi(\bar{w})w''-\alpha_2 \bar{v} v+\beta_2\bar{v}w=0, &x\in(0,L),\\
d_3 w''-\beta_{31}\bar{w} u-\beta_{32}\bar{w} v-\alpha_3 \bar{w} w=0, &x\in(0,L).\\
u'(x)=v'(x)=w'(x)=0,&x=0,L.\\
\end{cases}
\end{split}
\end{align}
Multiplying equations in (\ref{416}) by $\cos{\frac{k\pi x}{L}}$ and integrating them over $(0,L)$ by parts, we obtain that
\begin{align}\label{417}
&\begin{pmatrix}
-d_1 (\frac{k\pi}{L})^2-{\alpha_1}\bar{u} & 0 & \chi^S_k \bar{u} \phi(\bar{w}) (\frac{k\pi}{L})^2+\beta_1 \bar{u}\\
0 & -d_2 (\frac{k\pi}{L})^2-{\alpha_2}\bar{v} & \xi \bar{v} \phi(\bar{w}) (\frac{k\pi}{L})^2+\beta_2 \bar{v}\\
-\beta_{31}\bar{w} & -\beta_{32}\bar{w} & -d_3 (\frac{k\pi}{L})^2-{\alpha_3}\bar{w}
\end{pmatrix}
\begin{pmatrix}
\int_{0}^{L}u\cos{\frac{k\pi x}{L}}dx \\
\int_{0}^{L}v\cos{\frac{k\pi x}{L}}dx\\
\int_{0}^{L}w\cos{\frac{k\pi x}{L}}dx
\end{pmatrix}\nonumber\\
=&
\begin{pmatrix}
\frac{(k\pi)^2\bar{u}\phi(\bar{w})}{2L}\\
0\\0
\end{pmatrix}.
\end{align}
The coefficient matrix is singular because of (\ref{47}), then we reach a contradiction and this completes the proof of condition (\ref{415}).  Finally the statements in Theorem \ref{theorem42} follow from Theorem 1.7 of \cite{CR}.
\end{proof}

\subsection{Stability of bifurcating solutions near $(\bar{u},\bar{v},\bar{w},\chi^S_k)$}
Now we proceed to study the stability of the spatially inhomogeneous solution $(u_k(s,x), v_k(s,x),\break w_k(s,x))$ established in Theorem \ref{theorem42}. Here the stability or instability is that of the bifurcation solution regarded as an equilibrium of system (\ref{31}). To this end, we want to determine the turning direction of the bifurcation branch $\Gamma_k(s)$ around each bifurcation point $\chi_k^S$. It is easy to see that the operator $\mathcal{F}$ is $C^4$--smooth if $\phi$ is $C^5$--smooth, therefore according to Theorem 1.8 in \cite{CR2}, we can write the following expansions
\begin{align}\label{418}
\begin{split}
\begin{cases}
u_k(s,x)=\bar{u}+sP_k\cos{\frac{k\pi x}{L}}+s^2\varphi_1(x)+s^3\varphi_2(x)+o(s^3),\\
v_k(s,x)=\bar{v}+sQ_k\cos{\frac{k\pi x}{L}}+s^2\psi_1(x)+s^3\psi_2(x)+o(s^3),\\
w_k(s,x)=\bar{w}+s\cos{\frac{k\pi x}{L}}+s^2\gamma_1(x)+s^3\gamma_2(x)+o(s^3),\\
\chi_k(s)=\chi^S_k+s\mathcal{K}_1+s^2\mathcal{K}_2+o(s^2),
\end{cases}
\end{split}
\end{align}
where $(\varphi_i,\psi_i,\gamma_i)\in\mathcal{Z}$ in (\ref{414}) and $\mathcal{K}_i$ are constants for $i=1,2$. $o(s^3)$ are taken with respect to the $\mathcal{X}$--topology and $o(s^2)$ is a constant.  Moreover we have the Taylor expansion
\begin{align}\label{419}
\phi(w_k(s,x))=\phi(\bar{w})+s \phi'(\bar{w})\cos{\frac{k\pi x}{L}}+s^2\Big(\phi'(\bar{w})\gamma_1+\frac{1}{2}\phi''(\bar{w})\cos^2{\frac{k\pi x}{L}}  \Big)+o(s^2).
\end{align}
First of all, we claim that the bifurcation branch $\Gamma_k$ is of pitch--fork type by showing $\mathcal{K}_1=0$. Substituting (\ref{418}) into (\ref{41}) and collecting $s^2$ terms, we obtain the following system
{\small\begin{align}\label{420}
\begin{split}
\begin{cases}
d_1\varphi''_1-\chi^S_k\bar{u}\phi(\bar{w})\gamma''_1=({\alpha_1}\varphi_1-\beta_1\gamma_1)\bar{u}-\mathcal{K}_1\bar{u}\phi(\bar{w})(\frac{k\pi}{L})^2\cos{\frac{k\pi x}{L}}+R_k,&x\in(0,L),\\
d_2\psi''_1- \xi\bar{v}\phi(\bar{w})\gamma''_1=({\alpha_2}\psi_1-\beta_2\gamma_1)\bar{v}+S_k,&x\in(0,L),\\
d_3\gamma''_1=(\beta_{31}P_k+\beta_{32}Q_k+{\alpha_3})\cos^2{\frac{k\pi x}{L}}+(\beta_{31}\varphi_1+\beta_{32}\psi_1+{\alpha_3}\gamma_1)\bar{w},&x\in(0,L),\\
\varphi'_1(x)=\psi'_1(x)=\gamma'_1(x)=0,&x=0,L,
\end{cases}
\end{split}
\end{align}}where
\begin{align*}
R_k=-\chi^S_k(\frac{k\pi}{L})^2(\bar{u}\phi'(\bar{w})+P_k\phi(\bar{w}))\cos{\frac{2k\pi x}{L}}+({\alpha_1}P_k-\beta_1)P_k\cos^2{\frac{k\pi x}{L}},
\end{align*}
and
\begin{align*}
S_k=-\xi(\frac{k\pi}{L})^2(\bar{v}\phi'(\bar{w})+Q_k\phi(\bar{w}))\cos{\frac{2k\pi x}{L}}+({\alpha_2}Q_k-\beta_2)Q_k\cos^2{\frac{k\pi x}{L}}.
\end{align*}
Multiplying the equations in (\ref{420}) by $\cos{\frac{k\pi x}{L}}$ and integrating them over $(0,L)$ by parts give us
\begin{align}\label{421}
\frac{\bar{u}\phi(\bar{w})(k\pi)^2}{2L}\mathcal{K}_1=&\Big(d_1(\frac{k\pi}{L})^2+{\alpha_1}\bar{u}\Big)\int_{0}^{L}\varphi_1\cos{\frac{k\pi x}{L}}dx \nonumber\\
&+\Big(-\chi^S_k\bar{u}\phi(\bar{w})(\frac{k\pi}{L})^2-\beta_1\bar{u} \Big)   \int_{0}^{L}\gamma_1\cos{\frac{k\pi x}{L}}dx,
\end{align}
\begin{equation}\label{422}
\Big(d_2(\frac{k\pi}{L})^2+{\alpha_2}\bar{v})\Big)\int_{0}^{L}\psi_1\cos{\frac{k\pi x}{L}}dx
+\Big(-\xi\bar{v}\phi(\bar{w})(\frac{k\pi}{L})^2-\beta_2\bar{v} \Big)   \int_{0}^{L}\gamma_1\cos{\frac{k\pi x}{L}}dx=0,
\end{equation}
and
{\small\begin{equation}\label{423}
\beta_{31}\bar{w}\!\!\int_{0}^{L}\!\!\varphi_1\cos{\frac{k\pi x}{L}}dx+\beta_{32}\bar{w}\!\!\int_{0}^{L}\!\!\psi_1\cos{\frac{k\pi x}{L}}dx +\Big(d_3(\frac{k\pi}{L})^2+{\alpha_3}\bar{w}\Big)\!\!\int_{0}^{L}\!\!\gamma_1\cos{\frac{k\pi x}{L}}dx=0.
\end{equation}}Since $(\varphi_1,\psi_1,\gamma_1)\in\mathcal{Z}$, we infer from (\ref{414}) that
\begin{align}\label{424}
P_k\int_{0}^{L}\varphi_1\cos{\frac{k\pi x}{L}}dx+Q_k\int_{0}^{L}\psi_1\cos{\frac{k\pi x}{L}}dx+\int_{0}^{L}\gamma_1\cos{\frac{k\pi x}{L}}dx=0,
\end{align}
where $P_k, Q_k$ are given by (\ref{49}) and (\ref{410}).  Combining (\ref{422})--(\ref{424}) gives
\[
\begin{pmatrix}
0 & d_2(\frac{k\pi}{L})^2+{\alpha_2}\bar{v} & -\xi\bar{v}\phi(\bar{w})(\frac{k\pi}{L})^2-\beta_2\bar{v}\\
\beta_{31}\bar{w} & \beta_{32}\bar{w} & d_3(\frac{k\pi}{L})^2+{\alpha_3}\bar{w}\\
P_k & Q_k & 1
\end{pmatrix}
\begin{pmatrix}
\int_{0}^{L}\varphi_1\cos{\frac{k\pi x}{L}}dx\\
\int_{0}^{L}\psi_1\cos{\frac{k\pi x}{L}}dx\\
\int_{0}^{L}\gamma_1\cos{\frac{k\pi x}{L}}dx
\end{pmatrix}
=
\begin{pmatrix}
0\\0\\0
\end{pmatrix}.
\]
The determinant of the coefficient matrix $\mathcal{M}$ to the system above is
\begin{align*}
\text{det}(\mathcal{M})=&P_k\Big(d_2 (\frac{k\pi}{L})^2+{\alpha_2}\bar{v}\Big)\Big( d_3(\frac{k\pi}{L})^2+{\alpha_3}\bar{w}\Big)+Q_k\beta_{31}\bar{w}\Big(-\xi\bar{v}\phi(\bar{w})(\frac{k\pi}{L})^2\\
&-\beta_2\bar{v} \Big)-\beta_{31}\bar{w}\Big(d_2(\frac{k\pi}{L})^2+{\alpha_2}\bar{v} \Big)-P_k\beta_{32}\bar{w}\Big(-\xi\bar{v}\phi(\bar{w})(\frac{k\pi}{L})^2-\beta_2\bar{v} \Big)\\
=& P_k(H_2H_3+\beta_{32}\bar{w}Q_k H_2)-\beta_{31}\bar{w}(Q^2_{k}H_2+H_2)\\
=& \Big(-\frac{\beta_{32}}{\beta_{31}}Q_k-\frac{H_3}{\beta_{31}\bar{w}}\Big)(H_2H_3+\beta_{32}\bar{w}Q_k H_2)-\beta_{31}\bar{w}(Q^2_kH_2+H_2)\\
=&-\Big(\frac{\beta_{32}\bar{w}H_2}{\beta_{31}}+\beta_{31}\bar{w}H_2\Big)Q^2_k-\Big(\frac{H_2H_3^2}{\beta_{31}\bar{w}}+\beta_{31}\bar{w}H_2\Big)-\frac{2\beta_{32}H_2H_3}{\beta_{31}}Q_k<0.
\end{align*}
Therefore we have
\begin{align}\label{425}
\int_{0}^{L}\varphi_1\cos{\frac{k\pi x}{L}}dx=\int_{0}^{L}\psi_1\cos{\frac{k\pi x}{L}}dx=\int_{0}^{L}\gamma_1\cos{\frac{k\pi x}{L}}dx=0,
\end{align}
and this implies that $\mathcal{K}_1=0$ in (\ref{421}).  Thus the bifurcation branch $\Gamma_k(s)$ is of pitch--fork, i.e., being one sided.  Now we present another main result of this paper which states that the stability of the bifurcating solutions depends on the sign of $\mathcal{K}_2$.
\begin{theorem}\label{theorem43}
Suppose that all the conditions in Theorem \ref{theorem42} are satisfied and let $\Gamma_k(s)=\{(u_k(s,k),v_k(s,k), w_k(s,k), \chi_k(s))\}$ be the bifurcation branch given by (\ref{412})--(\ref{413}).  Denote $\chi_0=\min_{k\in \mathbb{N}^+}\{\chi^S_k,\chi^H_k\}$ as in (\ref{33}).  Then it holds that: (i) If $\chi_0=\chi^S_{k_0}<\min_{k\in\mathbb N^+}\chi^H_k$, then $\Gamma_{k_0}(s)$ around $(\bar u,\bar v,\bar w,\chi^S_{k_0})$ is asymptotically stable when $\mathcal{K}_2>0$ and it is unstable when $\mathcal{K}_2<0$, while $\Gamma_k(s)$ around $(\bar u,\bar v,\bar w,\chi^S_k)$ is always unstable for each $k\neq k_0$; (ii) If $\chi_0=\chi^H_{k_1}<\min_{k\in\mathbb N^+} \chi^S_k$, then $\Gamma_k(s)$ around $(\bar u,\bar v,\bar w,\chi^S_k)$ is always unstable for each $k\in\mathbb N^+$.
\end{theorem}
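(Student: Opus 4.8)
The plan is to read off the stability of each branch $\Gamma_k(s)$ from the linearized stability principle attached to the Crandall--Rabinowitz theorem, developed in \cite{CR2}, which reduces the question to the sign of a single real eigenvalue that perturbs out of the zero eigenvalue of $D\mathcal F(\bar u,\bar v,\bar w,\chi^S_k)$. Condition (\ref{411}) guarantees that $\chi^S_k$ is a simple bifurcation value: the null space (\ref{48}) is one--dimensional and the transversality (\ref{415}) was verified in Theorem \ref{theorem42}, so zero is a $K$--simple eigenvalue of the Fr\'echet derivative. Accordingly there exist $C^1$ curves $\gamma(\chi)$ and $\mu(s)$, with $\gamma(\chi^S_k)=\mu(0)=0$, where $\gamma(\chi)$ is the eigenvalue of the linearization $D\mathcal F(\bar u,\bar v,\bar w,\chi)$ at the constant state and $\mu(s)$ is the corresponding eigenvalue of the linearization at the bifurcating solution $(u_k(s,x),v_k(s,x),w_k(s,x))$. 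The stability theorem then supplies the asymptotic relation
\[
\lim_{s\to0}\frac{-s\,\chi_k'(s)\,\gamma'(\chi^S_k)}{\mu(s)}=1,
\]
so $\mu(s)$ and $-s\,\chi_k'(s)\,\gamma'(\chi^S_k)$ share the same sign for small $s\neq0$.

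Next I would feed in the pitchfork structure already obtained: having shown $\mathcal K_1=0$ in (\ref{421})--(\ref{425}), the expansion (\ref{418}) gives $\chi_k(s)=\chi^S_k+\mathcal K_2 s^2+o(s^2)$, hence $\chi_k'(s)=2\mathcal K_2 s+o(s)$ and $-s\,\chi_k'(s)=-2\mathcal K_2 s^2+o(s^2)$. Thus the sign of $\mu(s)$ near $s=0$ equals the sign of $-\mathcal K_2\,\gamma'(\chi^S_k)$, and for the critical mode $k_0$ of case (i) everything hinges on $\gamma'(\chi^S_{k_0})$. To compute it I would use that $\gamma(\chi)$ is the root of the characteristic cubic $\sigma^3+\eta_2\sigma^2+\eta_1\sigma+\eta_0=0$ for mode $k_0$ in (\ref{32}) that passes through zero; differentiating the identity $\gamma^3+\eta_2\gamma^2+\eta_1\gamma+\eta_0=0$ at $\chi=\chi^S_{k_0}$, where $\gamma=0$ and $\eta_0=0$, yields $\gamma'(\chi^S_{k_0})=-\partial_\chi\eta_0(\chi^S_{k_0},k_0)/\eta_1(\chi^S_{k_0},k_0)$. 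Since $\phi(\bar w)<0$, the explicit form of $\eta_0$ gives $\partial_\chi\eta_0=H_2\beta_{31}\bar u\bar w\phi(\bar w)(\tfrac{k_0\pi}{L})^2<0$, while $\eta_1(\chi^S_{k_0},k_0)>0$ because at $\chi_0$ the two nonzero roots of the mode-$k_0$ cubic lie in the left half--plane and their product is $\eta_1$; therefore $\gamma'(\chi^S_{k_0})>0$.

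At this point the cases separate. In case (i), where $\chi_0=\chi^S_{k_0}<\min_{k}\chi^H_k$, the equilibrium is stable for $\chi<\chi_0$, so at $\chi=\chi^S_{k_0}$ every eigenvalue of (\ref{32}) except the single crossing one sits strictly in the left half--plane; by compactness of the resolvent and continuous dependence of the discrete spectrum on the coefficients, this spectral gap persists for the linearization at $(u_{k_0}(s,x),v_{k_0}(s,x),w_{k_0}(s,x))$ when $s$ is small, so the stability of $\Gamma_{k_0}(s)$ is governed solely by $\mu(s)$. With $\gamma'(\chi^S_{k_0})>0$, its sign is opposite to that of $\mathcal K_2$, giving asymptotic stability when $\mathcal K_2>0$ and instability when $\mathcal K_2<0$. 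For $k\neq k_0$ in case (i), and for every $k$ in case (ii) where $\chi_0=\chi^H_{k_1}<\min_{k}\chi^S_k$, one has $\chi^S_k>\chi_0$, so by Proposition \ref{proposition31} and Remark \ref{remark31} the constant state is already unstable at $\chi=\chi^S_k$, i.e.\ (\ref{32}) has an eigenvalue with strictly positive real part. The same spectral--continuity argument shows this unstable eigenvalue persists for the linearization at $(u_k(s,x),v_k(s,x),w_k(s,x))$ with $s$ small, so $\Gamma_k(s)$ is unstable irrespective of its own $\mathcal K_2$.

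The main obstacle I anticipate is not the eigenvalue bookkeeping but justifying the spectral--continuity step rigorously: one must check that the linearized operator along the branch is, for small $s$, a relatively compact perturbation of $D\mathcal F(\bar u,\bar v,\bar w,\chi^S_k)$, so that its spectrum is close enough to conclude both the persistence of the sign of the critical eigenvalue and the persistence of the spectral gap (respectively, of the pre--existing unstable eigenvalue). In this one--dimensional elliptic setting this follows from the compact resolvent together with the analytic dependence of the coefficients on $s$, but it is delicate precisely because the operator is non--self--adjoint, so a priori complex eigenvalues could approach the imaginary axis; ruling this out near $\chi^S_k$ relies on the strict spectral gap furnished by the minimality of $\chi_0$ in case (i) and by Remark \ref{remark31} in the remaining cases.
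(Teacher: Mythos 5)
Correct, and essentially the paper's own approach: the instability assertions (case \emph{(ii)} and the branches $k\neq k_0$ in case \emph{(i)}) are proved in the paper exactly as you propose --- the matrix (\ref{32}) at $\chi=\chi^S_k>\chi_0$ has an eigenvalue with positive real part by Proposition \ref{proposition31}, and this eigenvalue persists along the branch for small $s$ by standard eigenvalue perturbation theory \cite{Ka} --- while the $\mathcal{K}_2$ criterion for $\Gamma_{k_0}$ is delegated by the paper to Corollary 1.13 of \cite{CR2}, which is precisely the exchange--of--stability relation $\mu(s)\sim -s\,\chi_{k_0}'(s)\,\gamma'(\chi^S_{k_0})$ you invoke together with the pitchfork expansion $\chi_{k_0}'(s)=2\mathcal{K}_2 s+o(s)$. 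Your proposal in fact makes the deferred step explicit where the paper does not: the computation $\gamma'(\chi^S_{k_0})=-\partial_\chi\eta_0/\eta_1>0$ requires $\eta_1(\chi^S_{k_0},k_0)>0$, which the paper records separately as Lemma \ref{lemma51}, and your left--half--plane argument for it is sound once one notes that $\eta_2>0$ excludes nonzero purely imaginary roots at $\chi_0$ and that $\chi^M_{k_0}\neq\chi^S_{k_0}$ (so $\eta_1\neq0$) excludes a double zero root.
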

The bifurcation curves $\Gamma_k(s)$ in case \emph{(i)} are illustrated in Figure \ref{fig1} schematically.  Our results suggest that $(\bar u,\bar v,\bar w,\chi^S_k)$ loses its stability to stable steady state bifurcating solution with wave mode number $k_0$ for which $\chi^S_k$ achieves its minimum over $\mathbb N^+$.  When case \emph{(ii)} occurs, we surmise that the stability of the homogeneous solution is lost to stable Hopf bifurcating solutions.  This is rigorously verified in Section \ref{section5}.  We would like to mention that, $\mathcal K_2$ can be evaluated in terms of system parameters and we give the detailed calculations in Appendix.
\begin{figure}[h!]
        \centering
        \begin{subfigure}[b]{0.45\textwidth}
                \includegraphics[width=\textwidth]{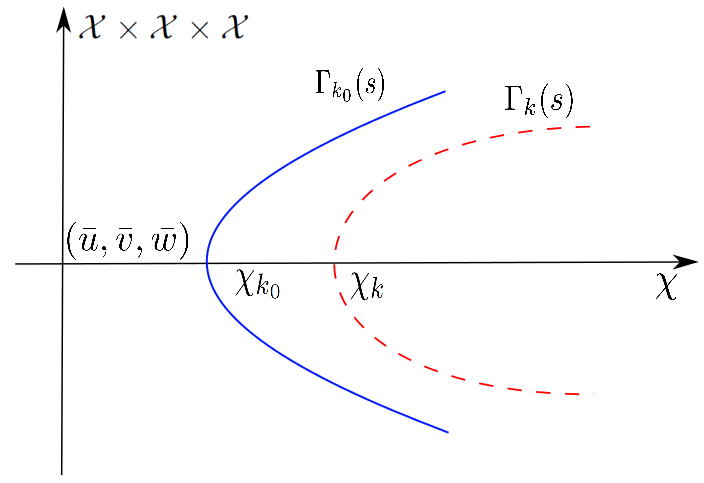}
                \caption*{}
                \label{fig:gull}
        \end{subfigure}\hspace{0.15in}
        \begin{subfigure}[b]{0.45\textwidth}
        \includegraphics[width=\textwidth]{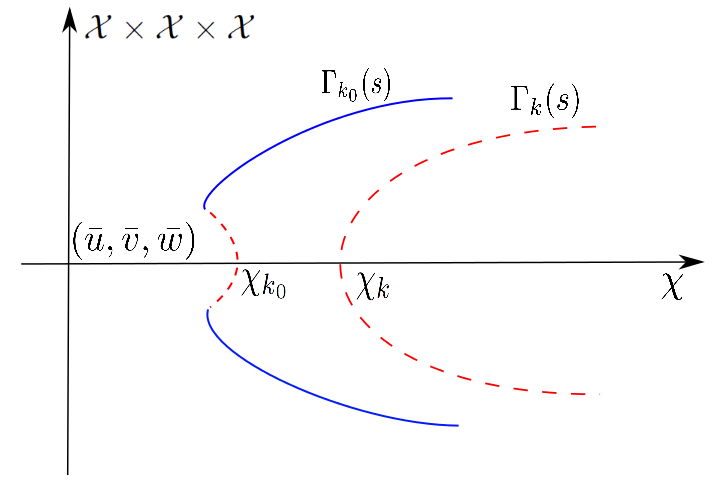}
                \caption*{}
                \label{fig:tiger}
        \end{subfigure}
\vspace*{-10pt} \caption{Pitch--fork bifurcation diagrams when case \emph{(i)} in Theorem \ref{theorem43} occurs.  The stable bifurcation curve is plotted in solid line and the unstable bifurcation curve is plotted in dashed line.  The branch $\Gamma_{k_0}(s)$ around $(\bar u,\bar v,\bar w,\chi^S_{k_0})$ is stable if it turns to the right and is unstable if it turns to the left, while $\Gamma_{k}(s)$ around $(\bar u,\bar v,\bar w,\chi^S_{k})$ is always unstable if $k\neq k_0$.}\label{fig1}
\end{figure}
\begin{proof} [Proof\nopunct] \emph{of Theorem} \ref{theorem43}.
Our proof follows the approaches in \cite{WGY,WZYH} based on slight modifications in the arguments for Corollary 1.13 of \cite{CR2}, or Theorem 3.2 of \cite{WGY}, Theorem 5.5, Theorem 5.6 of \cite{CKWW}.  We shall only prove case \emph{(ii)} and case \emph{(i)} can be treated similarly.

For each $k\in \mathbb N^+$, we linearize (\ref{41}) around $(u_k(s,x),v_k(s,x),w_k(s,x),\chi_k(s))$ and obtain the following eigenvalue problem
\[D\mathcal{F}(u_k(s,x),v_k(s,x),w_k(s,x),\chi_k(s))(u,v,w)=\sigma(s)(u,v,w),~(u,v,w)\in \mathcal{X} \times \mathcal{X}\times \mathcal{X},\]
then $(u_k(s,x),v_k(s,x),w_k(s,x),\chi_k(s))$ is asymptotically stable if and only if the real part of eigenvalue ${\sigma }(s)$ is negative.

Sending $s\rightarrow 0$, we know from the proof of (\ref{415}) that $\bar \sigma=0$ is a simple eigenvalue of $D\mathcal{F}(\bar{u},\bar{v},\bar{w},\chi^S_k)=\sigma(u,v,w)$ or equivalently
\begin{align*}
\begin{split}
\begin{cases}
d_1u''-\chi^S_k \bar{u}\phi(\bar{w})w''-\alpha_1 \bar{u} u+\beta_1\bar{u}w=\sigma u, &x\in(0,L),\\
d_2v''-\xi \bar{v}\phi(\bar{w})w''-\alpha_2 \bar{v} v+\beta_2\bar{v}w=\sigma v, &x\in(0,L),\\
d_3 w''-\beta_{31}\bar{w} u-\beta_{32}\bar{w} v-\alpha_3 \bar{w} w=\sigma w, &x\in(0,L).\\
u'(x)=v'(x)=w'(x)=0,&x=0,L,\\
\end{cases}
\end{split}
\end{align*}
which has one--dimensional eigen--space $\mathcal{N}\big(D\mathcal{F}(\bar{u},\bar{v},\bar{w},\chi^S_k)\big)=\{(P_k,Q_k,1)\cos \frac{k\pi x}{L}\}$.  Multiplying the system above by $\cos \frac{k\pi x}{L}$ and integrating them over $(0,L)$ by parts, we have that ${\sigma }=0$ is an eigenvalue of (\ref{32}) with $\chi=\chi^S_k$ which reads
\[
\begin{pmatrix}
-d_1 (\frac{k\pi}{L})^2-{\alpha_1}\bar{u} & 0 & \chi^S_k \bar{u} \phi(\bar{w}) (\frac{k\pi}{L})^2+\beta_1 \bar{u}\\
0 & -d_2 (\frac{k\pi}{L})^2-{\alpha_2}\bar{v} & \xi \bar{v} \phi(\bar{w}) (\frac{k\pi}{L})^2+\beta_2 \bar{v}\\
-\beta_{31}\bar{w} & -\beta_{32}\bar{w} & -d_3 (\frac{k\pi}{L})^2-{\alpha_3}\bar{w}
\end{pmatrix}.
\]
If $\chi_0=\min_{k\in\mathbb N^+}\chi^H_k<\chi^S_k$ for all $k\in\mathbb N^+$, or $\chi_0=\min_{k\in\mathbb N^+}\chi^S_k< \chi^H_k$ for $k \neq k_0$, we have from the proof of Proposition \ref{proposition31} that this matrix always has an eigenvalue ${\sigma }$ with positive real part.  From the standard eigenvalue perturbation theory in \cite{Ka}, for $s$ being small, there exists an eigenvalue ${\sigma }(s)$ to the linearized problem above that has a positive real part and therefore $(u_k(s,x),v_k(s,x),w_k(s,x),\chi_k(s))$ is unstable for $s\in(-\delta,\delta)$.
\end{proof}

According to Theorem \ref{theorem43}, the only stable bifurcation branch must be $\Gamma^S_{k_0}(s)$ if $\chi_{k_0}=\min_{k\in\mathbb N^+}\{\chi^S_k,\chi^H_k\}$,  therefore $(\bar u,\bar v,\bar w)$ loses its stability only to nonconstant steady state with wave mode $\cos \frac{k_0\pi x}{L}$.  This gives a wave mode selection mechanism for system (\ref{11}) when $\chi$ is around the bifurcation value.  In general it is very difficult to determine whether $\chi_0$ is achieved at $\chi^S_k$ or $\chi^H_k$.  According to the discussions after Remark \ref{remark31}, if the interval length $L$ is sufficiently small, $\chi_0=\chi^S_1<\min_{k\in\mathbb N^+}\chi^H_k$ and the only stable bifurcating solution has wave mode $\cos \frac{\pi x}{L}$ which is spatially monotone.  The wave mode section mechanism given in Theorem \ref{theorem43} is verified and illustrated in our numerical studies of (\ref{11}) in Section \ref{section6}.

\section{Time--periodic positive solutions}\label{section5}

In this section, we study the periodic orbits of (\ref{31}) that bifurcate from $(\bar u,\bar v,\bar w)$ at $\chi=\chi^H_k$.  We want to show that under proper assumptions on system parameters, the constant equilibrium $(\bar{u}, \bar{v}, \bar{w})$ loses its stability through Hopf bifurcation as $\chi$ comes across
$\chi_0=\min_{k\in \mathbb{N}^+}\{\chi^S_k, \chi^H_k\}$.  To apply the bifurcation theory for (\ref{31}) at point $\chi=\chi^H_k$, we need to verify that the real part of eigenvalue crosses the imaginary axis at $\chi^H_k$.

According to the discussions in Section \ref{section3}, Hopf bifurcation occurs for (\ref{31}) at $(\bar{u}, \bar{v}, \bar{w})$ only if $\chi=\chi_{k}^{H}$ and $\eta_1(\chi,k)>0$, when the stability matrix (\ref{32}) has purely imaginary eigenvalues given by
\[\sigma_{1}^{H}(\chi^H_k,k)=-\eta_2(\chi^H_k,k)<0,\sigma_{2,3}^{H}(\chi^H_k,k)=\pm\sqrt{\eta_1(\chi^H_k,k)}i.\]
To determine when $\eta_1(\chi^H_k,k)>0$, we let $\chi^M_k$ be the unique root of $\eta_1(\chi,k)=0$ which is given explicitly in the following form
\begin{align*}
 \chi^M_k=&-\frac{(d_{1}(\frac{k\pi}{L})^2+\alpha_1\bar{u})(d_{2}(\frac{k\pi}{L})^2+\alpha_2\bar{v})+(d_{1}(\frac{k\pi}{L})^2+
 \alpha_1\bar{u})(d_{3}(\frac{k\pi}{L})^2+\alpha_3\bar{w})}{\beta_{31}\bar{u}\bar{w}\phi(\bar{w})(\frac{k\pi}{L})^2}\\
 &-\frac{(d_{2}(\frac{k\pi}{L})^2+\alpha_2\bar{v})(d_{3}(\frac{k\pi}{L})^2+\alpha_3\bar{w})}{\beta_{31}\bar{u}\bar{w}\phi(\bar{w})
 (\frac{k\pi}{L})^2}-\frac{\xi\beta_{32}\bar{v}\phi(\bar{w})(\frac{k\pi}{L})^2+\beta_{32}\beta_{2}\bar{v}+\beta_{31}\beta_{1}\bar{u}}{\beta_{31}\bar{u}\phi(\bar{w})(\frac{k\pi}{L})^2}.
\end{align*}
We first give the following fact which will be used in our coming analysis.
\begin{lemma}\label{lemma51}
Let $\chi^M_k$ be given as above, then for each $k\in \mathbb{N}^+$,it holds that either $\chi_{k}^{H}<\chi^M_k<\chi_{k}^{S}$ or $\chi_{k}^{S}<\chi^M_k<\chi_{k}^{H}$.  Moreover $\eta_1(\chi_{k}^{H},k)>0>\eta_1(\chi_{k}^{S},k)$ if $\chi_{k}^{H}<\chi^M_k<\chi_{k}^{S}$ and $\eta_1(\chi_{k}^{S},k)>0>\eta_1(\chi_{k}^{H},k)$ if $\chi_{k}^{S}<\chi^M_k<\chi_{k}^{H}$.
\end{lemma}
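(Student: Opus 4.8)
The plan is to exploit a structural feature that the formulas in the proof of Proposition \ref{proposition31} make transparent: $\eta_2(\chi,k)\equiv H_1+H_2+H_3$ carries no dependence on $\chi$, while $\eta_0(\cdot,k)$ and $\eta_1(\cdot,k)$ are each \emph{affine} in $\chi$. First I would read off that the coefficient of $\chi$ in $\eta_1$ is $b:=\beta_{31}\bar u\bar w\phi(\bar w)(\frac{k\pi}{L})^2$ and that in $\eta_0$ is $H_2\,b$. Since $\phi(\bar w)<0$ and $H_2=d_2(\frac{k\pi}{L})^2+\alpha_2\bar v>0$, both slopes are strictly negative, so $\eta_0(\cdot,k)$ and $\eta_1(\cdot,k)$ are strictly decreasing in $\chi$ with unique zeros at $\chi^S_k$ and $\chi^M_k$, respectively. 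I would then introduce the auxiliary function $g(\chi):=\eta_1(\chi,k)\eta_2(\chi,k)-\eta_0(\chi,k)$. Because $\eta_2(\chi,k)$ is constant in $\chi$, $g$ is again affine in $\chi$, with $\chi$-coefficient $(H_1+H_2+H_3)b-H_2\,b=(H_1+H_3)b<0$; hence $g$ is strictly decreasing, and by the Routh--Hurwitz characterization of Proposition \ref{proposition31} (where $\eta_1\eta_2-\eta_0<0$ exactly when $\chi>\chi^H_k$) its unique zero is precisely $\chi^H_k$.

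The crux is then a single identity. Evaluating $g$ at $\chi=\chi^M_k$ and using $\eta_1(\chi^M_k,k)=0$ gives
\[
g(\chi^M_k)=\eta_2(\chi^M_k,k)\,\eta_1(\chi^M_k,k)-\eta_0(\chi^M_k,k)=-\eta_0(\chi^M_k,k),
\]
so the values of $g$ and of $\eta_0$ at the point $\chi^M_k$ have opposite signs. I would rule out the degenerate possibility $\eta_0(\chi^M_k,k)=0$ as follows: were it zero, then $\chi^M_k$ would be the zero of $\eta_0$, forcing $\chi^M_k=\chi^S_k$, while simultaneously $g(\chi^M_k)=0$ would force $\chi^M_k=\chi^H_k$, whence $\chi^S_k=\chi^H_k$, contradicting the standing assumption $\chi^S_k\neq\chi^H_k$ imposed in Remark \ref{remark31}. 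Thus $\eta_0(\chi^M_k,k)\neq0$ and exactly one of the two sign alternatives occurs.

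It then remains to read off the two cases from monotonicity. If $\eta_0(\chi^M_k,k)>0$, then since $\eta_0$ is decreasing with zero at $\chi^S_k$ we obtain $\chi^M_k<\chi^S_k$, while $g(\chi^M_k)=-\eta_0(\chi^M_k,k)<0$ together with $g$ decreasing with zero at $\chi^H_k$ gives $\chi^M_k>\chi^H_k$; hence $\chi^H_k<\chi^M_k<\chi^S_k$. The opposite sign $\eta_0(\chi^M_k,k)<0$ yields $\chi^S_k<\chi^M_k<\chi^H_k$ by the identical reasoning, which establishes the stated dichotomy. The ``moreover'' part is then immediate from the monotonicity of $\eta_1$: in the first ordering $\chi^H_k<\chi^M_k$ gives $\eta_1(\chi^H_k,k)>0$ and $\chi^M_k<\chi^S_k$ gives $\eta_1(\chi^S_k,k)<0$, so $\eta_1(\chi^H_k,k)>0>\eta_1(\chi^S_k,k)$, and symmetrically in the second ordering.

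I expect no serious obstacle in this argument; the two points requiring care are (i) verifying that $\eta_1\eta_2-\eta_0$ remains affine in $\chi$, which is exactly where the $\chi$-independence of $\eta_2$ is essential and would fail for a genuinely $\chi$-dependent trace term, and (ii) invoking the hypothesis $\chi^S_k\neq\chi^H_k$ to exclude the triple coincidence $\chi^S_k=\chi^M_k=\chi^H_k$, the single configuration in which the strict dichotomy would break down.
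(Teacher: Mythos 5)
Your proof is correct, and I verified the details against the formulas in the proof of Proposition \ref{proposition31}: the coefficient of $\chi$ in $\eta_1(\cdot,k)$ is indeed $b=\beta_{31}\bar u\bar w\phi(\bar w)(\frac{k\pi}{L})^2<0$, that in $\eta_0(\cdot,k)$ is $H_2b$, and since $\eta_2$ is $\chi$--independent the function $g=\eta_1\eta_2-\eta_0$ is affine with slope $(H_1+H_3)b<0$, consistent with the denominator appearing in the explicit formula (\ref{35}) for $\chi^H_k$ and with the characterizations ``$\eta_0<0$ iff $\chi>\chi^S_k$'' and ``$\eta_1\eta_2-\eta_0<0$ iff $\chi>\chi^H_k$'' established in that proof. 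The paper itself states Lemma \ref{lemma51} as a bare fact with no proof, so there is no argument to compare against; yours is the natural one and supplies exactly what the authors left implicit, via the key identity $g(\chi^M_k)=-\eta_0(\chi^M_k)$ together with strict monotonicity of $\eta_0$, $\eta_1$ and $g$. You are also right about the two hypotheses doing real work even though the lemma does not restate them: $\phi(\bar w)<0$ is the standing assumption from the end of Section \ref{section3} (without it the slopes need not be negative, and indeed $\chi^M_k$ would not be well defined as a unique root), and the nondegeneracy $\chi^S_k\neq\chi^H_k$ imposed in Remark \ref{remark31}(3) is precisely what excludes the coincidence $\chi^S_k=\chi^M_k=\chi^H_k$, at which the strict dichotomy would collapse. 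No gaps.
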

According to Lemma \ref{lemma51} and discussions in Section \ref{section3}, the stability matrix (\ref{32}) has a pair of purely imaginary eigenvalues if and only if $\chi=\chi^H_k<\chi^S_k$, therefore Hopf bifurcation may occur at $(\bar{u}, \bar{v}, \bar{w}, \chi_{k}^{H})$ only when $\chi_{k}^{H}<\chi_{k}^{S}$.  We shall always assume this condition in the coming Hopf bifurcation analysis.

\subsection{Hopf bifurcation}
In this subsection, we prove the existence of Hopf bifurcation of (\ref{31}) assuming that $\chi_{k}^{H}<\chi_{k}^{S}$.  We recall the notation of Sobolev space $\mathcal X=\{u\in H^2(0, L)|u'(0)=u'(L)=0\}$ from Section \ref{section4}.  According to the proof of Theorem \ref{theorem21}, we know that (\ref{31}) is normally parabolic, therefore we can apply the Hopf bifurcation theory from \cite{A} (or Theorem 1.11 from \cite{CR4}, Theorem 6.1 from \cite{LSW}).  Our main result on the existence of nontrivial periodic orbits of (\ref{31}) states as follows.
\begin{theorem}\label{theorem52}
Suppose that all parameters in (\ref{31}) are positive, $\alpha_3>\beta_{31}+\beta_{32}$ and $\phi(\bar w)<0$.  Assume that $\chi^H_k\neq\chi_j^H$ for $\forall j\neq k$ and $\chi_{k}^{H}<\chi_{k}^{S}$, then there exist a positive constant $\delta$ and a unique one--parameter family of nontrivial periodic orbits $\vartheta_k(s)=(\textbf{u}_k(s,x,t), T_k(s), \chi_k(s)), s\in(-\delta, \delta)\rightarrow C^3(\mathbb{R}, \mathcal X^3)\times\mathbb{R}^+\times\mathbb{R}$ with
\begin{equation}\label{51}
\textbf{u}_k(s,x,t)=(\bar{u},\bar{v},\bar{w})+s(V_k^+e^{i\tau_0t}+V_k^-e^{-i\tau_0t})\cos\frac{k\pi x}{L}+o(s)
\end{equation}
such that $(\textbf{u}_k(s,x,t), \chi_k(s))$ is a nontrivial solution of (\ref{31}) and $\textbf{u}_k(s,x,t)$ is periodic of time $t$ with period
\begin{equation}\label{52}
  T_k(s)\approx\frac{2\pi}{\tau_0}, \tau_0=\sqrt{\eta_1(\chi_k^{H},k)}
\end{equation}
and $\{(V_k^\pm,\pm i\tau_0)\}$ are eigen--pairs of matrix (\ref{32}); moreover $\vartheta_k(s_1)\neq\vartheta _k(s_2)$ for all $s_1\neq s_2\in (-\delta, \delta)$ and all nontrivial periodic solutions around $(\bar{u}, \bar{v}, \bar{w}, \chi^H_k)$ must be on the orbit $\vartheta_k(s), s\in(-\delta, \delta)$. In other words, if (\ref{31}) has a nontrivial periodic solution $\textbf{u}_1(x,t)$ with period $T$ for some $\chi\in \mathbb{R}$ around $\vartheta_k(s)$ and a small positive constant $\epsilon$ such that $|\chi-\chi^H_k(s)|<\epsilon, |T-\frac{2\pi}{\tau_0}|<\epsilon$ and $\max_{t\in\mathbb{R}^+, x\in\bar{\Omega}}|\textbf{u}_1(x,t)-(\bar{u}, \bar{v}, \bar{w})|<\epsilon$, then there exist constants $s_0\in(-\delta,\delta)$ and $\theta_0\in[0, 2\pi)$ such that $(T, \chi)=(T_k(s_0), \chi^H_k(s_0))$ and $\textbf{u}_1(x,t)=\textbf{u}_k(s_0,x,t+\theta_0).$
\end{theorem}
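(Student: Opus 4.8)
The plan is to verify the hypotheses of an abstract Hopf bifurcation theorem (Amann \cite{A}, or Theorem 1.11 of \cite{CR4}, or Theorem 6.1 of \cite{LSW}) for (\ref{31}) rewritten as an abstract evolution equation on the complexification of $\mathcal{X}^3$. Since (\ref{31}) is normally parabolic (as established in connection with Theorem \ref{theorem21}), its linearization $\mathcal{L}(\chi):=D\mathcal{F}(\bar u,\bar v,\bar w,\chi)$ generates an analytic semigroup, and the abstract theory reduces the existence of a bifurcating branch of periodic orbits to three spectral conditions at $\chi=\chi^H_k$: (a) $\pm i\tau_0$ form an algebraically simple pair of eigenvalues of $\mathcal{L}(\chi^H_k)$; (b) the non-resonance condition that $im\tau_0$ lies in the resolvent set of $\mathcal{L}(\chi^H_k)$ for every integer $m\neq\pm1$; and (c) the transversality condition that this pair crosses the imaginary axis with nonzero speed as $\chi$ passes $\chi^H_k$.

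First I would exploit the Fourier decoupling. Expanding in the cosine basis $\cos\frac{j\pi x}{L}$, the spectrum of $\mathcal{L}(\chi)$ is the union over $j\in\mathbb{N}$ of the eigenvalues of the $3\times3$ matrices (\ref{32}) with $k$ replaced by $j$; these solve the cubic $\sigma^3+\eta_2(\chi,j)\sigma^2+\eta_1(\chi,j)\sigma+\eta_0(\chi,j)=0$ from the proof of Proposition \ref{proposition31}. At $\chi=\chi^H_k$ and $j=k$ one has $\eta_0=\eta_1\eta_2$ (Remark \ref{remark31}, case (2)), so the cubic factors as $(\sigma+\eta_2)(\sigma^2+\eta_1)=0$, yielding the simple real root $-\eta_2<0$ and the pair $\pm i\sqrt{\eta_1}=\pm i\tau_0$; the positivity $\eta_1(\chi^H_k,k)>0$ is supplied by Lemma \ref{lemma51} under the standing hypothesis $\chi^H_k<\chi^S_k$, which also fixes $\tau_0=\sqrt{\eta_1(\chi^H_k,k)}$ as in (\ref{52}). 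The corresponding eigenvectors $V_k^\pm$ of (\ref{32}) provide the leading profile in (\ref{51}).

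Next I would establish the three spectral conditions. Simplicity of $\pm i\tau_0$ within mode $k$ is immediate from the distinctness of the three cubic roots; simplicity for the full operator follows because distinct modes are decoupled and, for $j\neq k$, the mode-$j$ cubic has a purely imaginary root only when $\chi=\chi^H_j$, which is excluded by the assumption $\chi^H_k\neq\chi^H_j$. For the non-resonance condition I would rule out $0$ as an eigenvalue, which requires $\eta_0(\chi^H_k,j)\neq0$, i.e. $\chi^H_k\neq\chi^S_j$, for every $j$, and then rule out $\pm im\tau_0$, $m\geq2$, by checking that no mode-$j$ cubic admits such a root, again using the separation of the Hopf values. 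Finally, for transversality I would write the complex branch as $\sigma(\chi)=\lambda(\chi)+i\omega(\chi)$ with $\lambda(\chi^H_k)=0$, differentiate the characteristic cubic implicitly in $\chi$, and reduce $\lambda'(\chi^H_k)\neq0$ to $\frac{d}{d\chi}\big(\eta_1\eta_2-\eta_0\big)\big|_{\chi=\chi^H_k}\neq0$; this sign change is exactly what Proposition \ref{proposition31} and Remark \ref{remark31} record, since $(\bar u,\bar v,\bar w)$ switches from stable to unstable across $\chi^H_k$, so the real part must cross zero with nonzero speed.

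With (a)--(c) verified, the abstract theorem delivers the one-parameter family $\vartheta_k(s)$, the expansion (\ref{51}), the period (\ref{52}), and the local uniqueness and completeness assertions that all nearby small periodic orbits coincide with $\vartheta_k$ up to a phase shift, the latter following from the standard Lyapunov--Schmidt reduction on the period map together with the eigenvalue perturbation theory of \cite{Ka}. The main obstacle I anticipate is the non-resonance bookkeeping: ruling out that some mode $j\neq k$ contributes an eigenvalue equal to $0$ or to an integer multiple $im\tau_0$ at the precise value $\chi=\chi^H_k$ requires controlling infinitely many cubics simultaneously. The transversality step, while conceptually tied to the stability switch, likewise demands a careful implicit differentiation to confirm that the crossing is strict rather than tangential.
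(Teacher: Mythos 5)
Your route is the paper's route: rewrite (\ref{31}) as an abstract evolution equation, invoke normal parabolicity and Amann's Hopf theorem \cite{A}, decouple the linearization over the cosine modes so that its spectrum is the union over $j$ of the eigenvalues of the matrices (\ref{32}), obtain the simple pair $\pm i\tau_0$ at mode $k$ from the factorization $(\sigma+\eta_2)(\sigma^2+\eta_1)=0$ forced by $\eta_0=\eta_1\eta_2$, with $\eta_1(\chi^H_k,k)>0$ supplied by Lemma \ref{lemma51} under $\chi^H_k<\chi^S_k$, and exclude resonances $im\tau_0$, $|m|\geq 2$, via $\chi^H_k\neq\chi^H_j$. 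Your insistence on additionally checking $\chi^H_k\neq\chi^S_j$ for every $j$ (to keep $0$ out of the spectrum, the $m=0$ case) is bookkeeping the paper's proof does not spell out; on that point your version is, if anything, the more careful one.

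The one step where your argument as written would fail is transversality. Your reduction of $\lambda'(\chi^H_k)\neq0$ to $\frac{d}{d\chi}\big(\eta_1\eta_2-\eta_0\big)\big|_{\chi=\chi^H_k}\neq0$ is correct: from the root decomposition one has $\eta_1\eta_2-\eta_0=-2\lambda\big[(\lambda+\sigma_1)^2+\tau^2\big]$, so at $\lambda=0$ the $\chi$--derivative equals $-2\lambda'(\eta_2^2+\tau_0^2)$. But you then justify the nonvanishing by arguing that $(\bar u,\bar v,\bar w)$ switches from stable to unstable, ``so the real part must cross zero with nonzero speed.'' That inference is invalid: a stability switch only forces $\lambda$ to change sign, which is compatible with a tangential odd--order crossing such as $\lambda(\chi)\sim(\chi-\chi^H_k)^3$; Proposition \ref{proposition31} and Remark \ref{remark31} record the sign change, not the derivative. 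The missing computation --- exactly the content of (\ref{54})--(\ref{56}) in the paper --- is short: $\eta_2$ is independent of $\chi$ while $\eta_1$ and $\eta_0$ are affine in $\chi$, with $\partial_\chi\eta_1=\beta_{31}\bar u\bar w\phi(\bar w)(\frac{k\pi}{L})^2$ and $\partial_\chi\eta_0=H_2\,\beta_{31}\bar u\bar w\phi(\bar w)(\frac{k\pi}{L})^2$, whence
\begin{equation*}
\frac{d}{d\chi}\big(\eta_1\eta_2-\eta_0\big)=\beta_{31}\bar u\bar w\phi(\bar w)\Big(\frac{k\pi}{L}\Big)^2\Big((d_1+d_3)\Big(\frac{k\pi}{L}\Big)^2+\alpha_1\bar u+\alpha_3\bar w\Big)<0
\end{equation*}
since $\phi(\bar w)<0$, giving $\lambda'(\chi^H_k,k)>0$, i.e.\ the paper's (\ref{56}) in equivalent form. (Alternatively, since $\eta_1\eta_2-\eta_0$ is affine in $\chi$, the sign change at $\chi^H_k$ already forces a nonzero slope --- but that observation, not the stability switch per se, is what closes the argument.) With this filled in, your plan coincides with the paper's proof.
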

\begin{proof}
We follow the approach in the proof of Theorem 5.2 in \cite{WYZ} or Theorem 3.4 in \cite{LSW}.  According to Proposition \ref{proposition31} and Remark \ref{remark31}, the stability matrix (\ref{32}) with $\chi=\chi^H_k$ has a pair of purely imaginary eigenvalues $\sigma_{2,3}^{H}(k)=\pm\sqrt{\eta_1(\chi^H_k)}i$; moreover since $\chi^H_k\neq\chi_j^H$ for $\forall j\neq k$, matrix (\ref{32}) has no eigenvalue of the form $m\tau_0i$ for $m\in \mathbb{N}^+ \backslash \{\pm1\}$.

Let $\sigma^H_1(\chi,k)$ and $\sigma^H_{2,3}(\chi,k)=\lambda(\chi,k)\pm i\tau(\chi,k)$ be the unique eigenvalues of (\ref{32}) in a neighbourhood of $\chi=\chi^H_k$.  Then $\sigma^H_1$, $\lambda$ and $\tau$ are real analytical functions of $\chi$ satisfying $\lambda(\chi^H_k,k)=0$ and $\tau(\chi^H_k,k)=\tau_0>0$.  In order to apply Hopf bifurcation theory, we need to prove the following transversality condition
\begin{equation}\label{53}
  \frac{\partial\lambda(\chi,k)}{\partial\chi}\Big\vert_{\chi=\chi^H_k}\neq0.
\end{equation}
Substituting the eigenvalues $\sigma^H_1(\chi,k)$ and $\sigma^H_{2,3}(\chi,k)=\lambda(\chi,k)\pm i\tau(\chi,k)$ into the characteristic equation of the stability matrix (\ref{32}) and equating the real and imaginary parts give
\begin{equation}\label{54}
\left\{
\begin{array}{ll}
  -\eta_2(\chi,k)=2\lambda(\chi,k)+\sigma^H_1(\chi,k), \\
  \eta_1(\chi,k)=\lambda^2(\chi,k)+\tau^2(\chi,k)+2\lambda(\chi,k)\sigma^H_1(\chi,k), \\
  -\eta_0(\chi,k)=(\lambda^2(\chi,k)+\tau^2(\chi,k))\sigma^H_1(\chi,k).
\end{array}
\right.
\end{equation}
Differentiating the equations above with respect to $\chi$, we obtain
\[2\lambda'(\chi,k)+\sigma'_1(\chi,k)=0\]
and
\begin{align}\label{55}
&2\lambda(\chi,k)\lambda'(\chi,k)+2\tau(\chi,k)\tau'(\chi,k)+2\lambda'(\chi,k)\sigma_1(\chi,k)+2\lambda(\chi,k)\sigma_1'(\chi,k) \nonumber\\
=&\beta_{31}\bar{u}\bar{w}\phi(\bar{w})(\frac{k\pi}{L})^2, \nonumber\\
&(2\lambda(\chi,k)\lambda'(\chi,k)+2\tau(\chi,k)\tau'(\chi,k))\sigma_1(\chi,k)+(\lambda^2(\chi,k)+\tau^2(\chi,k))\sigma_1'(\chi,k) \nonumber\\
=&-\beta_{31}\bar{u}\bar{w}\phi(\bar{w})(\frac{k\pi}{L})^2(d_2(\frac{k\pi}{L})^2+{\alpha_2}\bar{v}).
\end{align}
Since $\lambda(\chi^H_k,k)=0$ and $\sigma_1(\chi^H_k,k)=-\eta_2(\chi^H_k,k)$, solving (\ref{55}) with $\chi=\chi^H_k$ gives that
\begin{align}\label{56}
\sigma_1'(\chi^H_k,k)=& -\frac{\beta_{31}\bar{w}\bar{u}\phi(\bar{w})(\frac{k\pi}{L})^2(d_2(\frac{k\pi}{L})^2
+{\alpha_2}\bar{v}-\eta_2(\chi^H_k,k))}{\tau_0^2+\eta^2_2(\chi^H_k,k)}\nonumber\\
=&\frac{\beta_{31}\bar{w}\bar{u}\phi(\bar{w})(\frac{k\pi}{L})^2((d_1+d_3)(\frac{k\pi}{L})^2
+\alpha_1\bar u+\alpha_3\bar w)}{\tau_0^2+\eta^2_2(\chi^H_k,k)}<0
\end{align}
and
\[\lambda'(\chi^H_k,k)=-\frac{1}{2}\sigma_1'(\chi^H_k)>0.\]
This verifies all the transversality conditions required in applying the Hopf bifurcation theory, then Theorem \ref{theorem52} follows from Theorem 1 in \cite{A}.
\end{proof}

Theorem \ref{theorem52} implies that system (\ref{31}) admits time--periodic spatial patterns that bifurcate from $(\bar{u},\bar{v},\bar{w},\chi^H_k)$ if and only if $\chi^H_k<\chi^S_k$. Furthermore, it gives the explicit expression of the time--periodic spatial patterns as $\vartheta_k$ mentioned above with the spatial profile of eigen--function $\cos\frac{k\pi x}{L}$.

As we have discussed in Section \ref{section3}, it is very difficult to determine the necessary condition $\chi^H_k<\chi^S_k$ in terms of system parameters, however if the interval $L$ is sufficiently small, we always have $\chi^H_k>\chi^S_k$ for each $k\in\mathbb N^+$, and therefore this indicates that Hopf bifurcation dose not occur for (\ref{31}) when the interval length is sufficiently small.  Indeed, in this case, we already know from the discussions after the proof of Theorem \ref{theorem43} that the stability of the homogeneous solution $(\bar{u},\bar{v},\bar{w})$ is lost through the steady state bifurcation at the first bifurcation branch $(\bar{u},\bar{v},\bar{w},\chi_{1}^S)$, which contains stable stationary solutions of (\ref{31}) with eigenfunction $\cos(\frac{\pi x}{L})$.

\subsection{Stability of time--periodic bifurcating solutions}
We continue to explore the stability of the time--periodic bifurcating solutions on the bifurcation curves $\vartheta_k(s)$ obtained in Theorem \ref{theorem52}.  The stability here we mean is the formal linearized stability of a periodic solution relative to perturbations from $\vartheta_k(s)$.  Suppose that $\chi_{k_0}^H=\min_{k\in \mathbb{N}^+}\chi^H_k<\chi^S_k, \forall k\in \mathbb{N}^+$, and assume that all the conditions in Theorem \ref{theorem52} are satisfied here, then our stability results show that $\vartheta_k(s)$, $s\in(-\delta, \delta)$ is asymptotically stable only if $\chi=\chi_{k_0}^H$.

Denote $\textbf{u}_k(s,x,t)=(u_k(s,x,t),v_k(s,x,t),w_k(s,x,t))$ and let $(\textbf{u}_k(s,x,t),T_k(s),\break\chi_k(s))$ be the periodic solutions on the branch $\vartheta_k(s)$ obtained in Theorem \ref{theorem52}.  Then we can rewrite (\ref{31}) into the following form
\[\frac{d\textbf{u}_k}{dt}=\mathcal G(\textbf{u}_k,\chi_k(s)),\]
where
\begin{align*}
\mathcal G(\textbf{u}_k,\chi_k(s))=
\begin{pmatrix}
(d_1 u'_k -\chi_k(s) u_k\phi(w_k)w'_k)'+\alpha_1(1-u_k)u_k+\beta_1u_k w_k \\
(d_2 v'_k -\xi v_k\phi(w_k)w'_k)'+\alpha_2(1-v_k)v_k+\beta_2v_k w_k \\
d_3 w''_k+\alpha_3(1-w_k)w_k-\beta_{31}u_k w_k-\beta_{32}v_k w_k \\
\end{pmatrix}.
\end{align*}
Differentiating the system against $t$, writing $\dot{\textbf{u}}_k=\frac{d \textbf{u}}{dt}$, we have
\[\frac{d\dot{\textbf{u}}_k}{dt}=\mathcal G_u(\textbf{u}_k,\chi_k(s))\dot{\textbf{u}}_k,\]
then we observe that 0 is a Floquet exponent and 1 is a Floquet multiplier for $\textbf{u}_k$.

Linearize the periodic solution around the bifurcation branch $\vartheta_k(s)$ by substituting the perturbed solution $\textbf{u}_k+\textbf{w}e^{-lt}$, where $\textbf{w}$ is a sufficiently small $T$-periodic function and $l=l(s)$ is a continuous function of $s$, then we have that
\begin{equation}\label{57}
  \frac{d\textbf{w}(s,t)}{dt}=\mathcal G_u(\textbf{u}_k,\chi_k(s))\textbf{w}(s,t)+l(s)\textbf{w}(s,t),
\end{equation}
where $\mathcal G_u$ is the Fr\'echet derivative with respect to $\textbf{u}$ given by
{\tiny\begin{align*}
&\mathcal G_u(\textbf{u}_k,\chi_k(s))=D\mathcal G(u_k,v_k,w_k,\chi_k(s))(u,v,w)\\
=&\begin{pmatrix}
d_1 u'' -\chi_k(s)(u\phi(w_k)w'_k+u_k w\phi'(w_k)w'_k+u_k\phi(w_k)w')'+(\alpha_1
-2\alpha_1u_k+\beta_1 w_k)u+\beta_1u_kw \\
d_2 v'' -\xi(v\phi(w_k)w'_k+v_k w\phi'(w_k)w'_k+v_k \phi(w_k)w')'+(\alpha_2
-2\alpha_2v_k+\beta_2 w_k)v+\beta_2v_kw \\
d_3 w''-\beta_{31}u_k w-\beta_{32}v_k w+(\alpha_3-2\alpha_3w_k-\beta_{31}u_k-\beta_{32}v_k)w \\
\end{pmatrix}.
\end{align*}}The stability of the bifurcating solutions around $\chi^H_k$ can be determined by computing the eigenvalues of this reduced equation. When $s=0$, (\ref{57}) is associated with the eigenvalue problem
\begin{equation}\label{58}
  \mathcal G_0(k)\textbf{w}=l(0)\textbf{w},
\end{equation}
where
\begin{align*}
\mathcal G_0(k)=
\begin{pmatrix}
d_1\frac{d^2}{dx^2}-{\alpha_1}\bar{u}& 0 &-\chi^H_k\bar{u}\phi(\bar{w})\frac{d^2}{dx^2}+\beta_1\bar{u}\\
0& d_2\frac{d^2}{dx^2}-{\alpha_2}\bar{v}& -\xi\bar{v}\phi(\bar{w})\frac{d^2}{dx^2}+\beta_2\bar{v} \\
-\beta_{31}\bar{w}& -\beta_{32}\bar{w}& -d_3\frac{d^2}{dx^2}-{\alpha_3}\bar{w} \\
\end{pmatrix},
\end{align*}
the spectrum of which is infinitely dimensional.  Moreover $\mathcal G_0$ corresponds to the stability matrix (\ref{32}).
\begin{align}\label{59}
\mathcal{A}_j(\chi^H_k)=
\begin{pmatrix}
-d_1(\frac{j\pi}{L})^2-{\alpha_1}\bar{u}& 0 &\chi^H_k\bar{u}\phi(\bar{w})(\frac{j\pi}{L})^2+\beta_1\bar{u}\\
0& -d_2(\frac{j\pi}{L})^2-{\alpha_2}\bar{v}& \xi\bar{v}\phi(\bar{w})(\frac{j\pi}{L})^2+\beta_2\bar{v} \\
-\beta_{31}\bar{w}& -\beta_{32}\bar{w}& d_3(\frac{j\pi}{L})^2-{\alpha_3}\bar{w} \\
\end{pmatrix}, j\in \mathbb{N}^+.
\end{align}

Suppose that $\min_{k\in \mathbb{N}^+}\{\chi^H_k,\chi^S_k\}=\chi_{k_0}^H$ for some $k_0\in \mathbb{N}^+$.  We first show that $\vartheta_k(s)$ around $\chi^H_k$ is unstable for any $k\neq k_0$.  Denote the eigenvalues of $\mathcal{A}_k(\chi^H_k)$ by $\sigma^H_1(\chi^H_k,k)$, $\sigma^H_2(\chi^H_k,k)$ and $\sigma^H_3(\chi^H_k,k)$.  According to the Proposition \ref{proposition31}, there exists at least one eigenvalue with positive real part if $\chi>\chi_0$.  Therefore for any positive integer $k\neq k_0$, we have that $\mathcal G_0(k)$ must have an eigenvalue with positive real part hence $l(0)<0$ if $k\neq k_0$. By the standard perturbation theory for an eigenvalue of finite multiplicity \cite{Henry,Ka}, $l(s)<0$ for $s$ being small if $k\neq k_0$, therefore all the bifurcation branches $\vartheta_k(s)$ around $(\bar{u},\bar{v},\bar{w})$ are unstable if $k\neq k_0$. The result indicates that if a periodic bifurcation solution is stable, it must be on the $\vartheta_{k_0}(s)$ branch where $\chi_{k_0}^H<\min_{k\in\mathbb{N}^+}\chi^S_k$, i.e., it is on the left-most branch, while the branches on its right hand side are always unstable.

Now we proceed to discuss the stability of branch $\vartheta_{k_0}(s)$ around $(\bar{u},\bar{v},\bar{w},\chi_{k_0}^H)$.  According to Lemma 2.10 in \cite{CR4} (or \cite{JN,JD}), the eigenvalue $l(k)$ is a continuous real function of $s$ near the origin.  For $\chi$ being around $\chi^H_{k_0}$, the eigenvalue of (\ref{59}) are $\sigma_1(\chi,k)$ and $\sigma_{2,3}(\chi,k)=\lambda(\chi,k)\pm i\tau(\chi,k)$.  According to Theorem 2.13 in \cite{CR4}, $l(s)$ and $s\chi'_{k_0}(s)$ have the same zeros in small neighbourhood of $s=0$ where $l(k)$ and $-\lambda(\chi_{k_0}^H)s\chi'_{k_0}s(s)$ have the same sign $-\lambda(\chi_{k_0}^H)s\chi'_{k_0}s(s)\neq 0, l(s)\neq 0$, and \[|l(s)+\lambda((\chi_{k_0}^H)s\chi'_{k_0}s(s)|\leq |s\chi'_{k_0}(s)|o(1),~\text{as}~~ s\rightarrow 0.\]

According to Theorem 8.2.3 in \cite{Henry}, if $l(s)>0$, the periodic bifurcation solutions are orbitally asymptotically stable and, if $l(s)<0$, the periodic bifurcation solutions are orbitally unstable.  We have proved that $\lambda(\chi_{k_0}^H)<0$ and $l(s)$ and $s\chi'_{k_0}(s)$ has the same sign. Therefore, assuming that $\chi''_{k_0}(0)\neq 0$, if the branching solutions appear supercritical, they are stable and if they appear subcritical, they are unstable.  Therefore, one need to compute $\chi'_{k_0}$ and/or $\chi''_{k_0}$ similarly as in Section \ref{section4}.  The calculations are straightforward but complicated and we skip them here for simplicity.

\section{Numerical simulations}\label{section6}
This section is devoted to the numerical studies of system (\ref{31}).  We are motivated to investigate the effects of prey--taxis on the formation of nontrivial patterns to this system.  In particular, we show that the one--dimensional system admits both stationary and time--periodic solutions emerging from bifurcations.  Moreover, we shall see that, when the prey--taxis rate $\chi$ is taken to be greatly larger than the critical bifurcation value $\chi_0$, (\ref{31}) can develop various interesting patterns with striking structures such as spikes, propagation, coarsening, etc.

\subsection{Stationary patterns}
In Table \ref{table1}, we list the values of $\chi^S_k$ and $\chi^H_k$ given in (\ref{34}) and (\ref{35}) respectively.  It is shown that their minimum value over $\mathbb N^+$ is achieved at $\chi^S_{6}\approx6.05$, therefore $(\bar u,\bar v,\bar w)=(1.71,1.18,0.71)$ loses its stability through steady state bifurcation with wave mode $\cos \frac{6\pi x}{7}$.
\begin{table}[h!]
\centering
\begin{tabular}{|c|p{1cm}|p{0.85cm}|p{0.85cm}|p{0.85cm}|p{0.85cm}|p{0.85cm}|p{1cm}|p{1cm}|p{1cm}|} \hline
$k$& 1 & 2 & 3 & 4 & 5& \textbf{6}& 7&8&9\\ \hline
$\chi^S_{k}$&66.98& 18.98 & 10.30&7.49&6.41&\textbf{6.05}&6.09&6.36&6.80 \\ \hline
$\chi^H_{k}$&1204.20& 550.84 &504.20&575.80&705.50&878.48&1089.70&1336.90&1619.19\\
\hline
\end{tabular}
\caption{Values of $\chi^S_k$ in (\ref{34}) and $\chi^H_k$ in (\ref{35}) for $L=7$.  The system parameters are chosen to be $d_1=d_3=0.1$, $d_2=2$, $\alpha_1=\beta_1=\beta_2=0.5$, $\alpha_2=2$, $\alpha_3=1$ and $\beta_{31}=\beta_{32}=0.1$, $\xi=0.5$.  The sensitivity function is $\phi(w)=w(0.1-w)$ which models the group defense of the preys when population density surpasses 0.1.  We see that $\min_{k\in\mathbb N^+}\{\chi^S_k,\chi^H_k\}=\chi^S_6$.  Therefore $(\bar u,\bar v,\bar w)$ loses its stability to the stable wave mode $\cos \frac{6\pi x}{7}$.  This is numerically verified in Figure \ref{figure2}.}
\label{table1}\vspace*{-10pt}
\end{table}
In Figure \ref{figure2}, we plot numerical solutions of (\ref{31}) subject to initial data $(u_0,v_0,w_0)=(\bar u, \bar v,\bar w)+(0.01,0.01,0.01)\cos \pi x$, which are small perturbations from the homogeneous equilibrium.  We see that the initial data have spatial profiles in the form of $\cos \pi x$, but the spatial--temporal patterns develop according to the stable wave mode $\cos \frac{6\pi x}{7}$.
\begin{figure}[h!]
\centering
\includegraphics[width=\textwidth,height=2.5in]{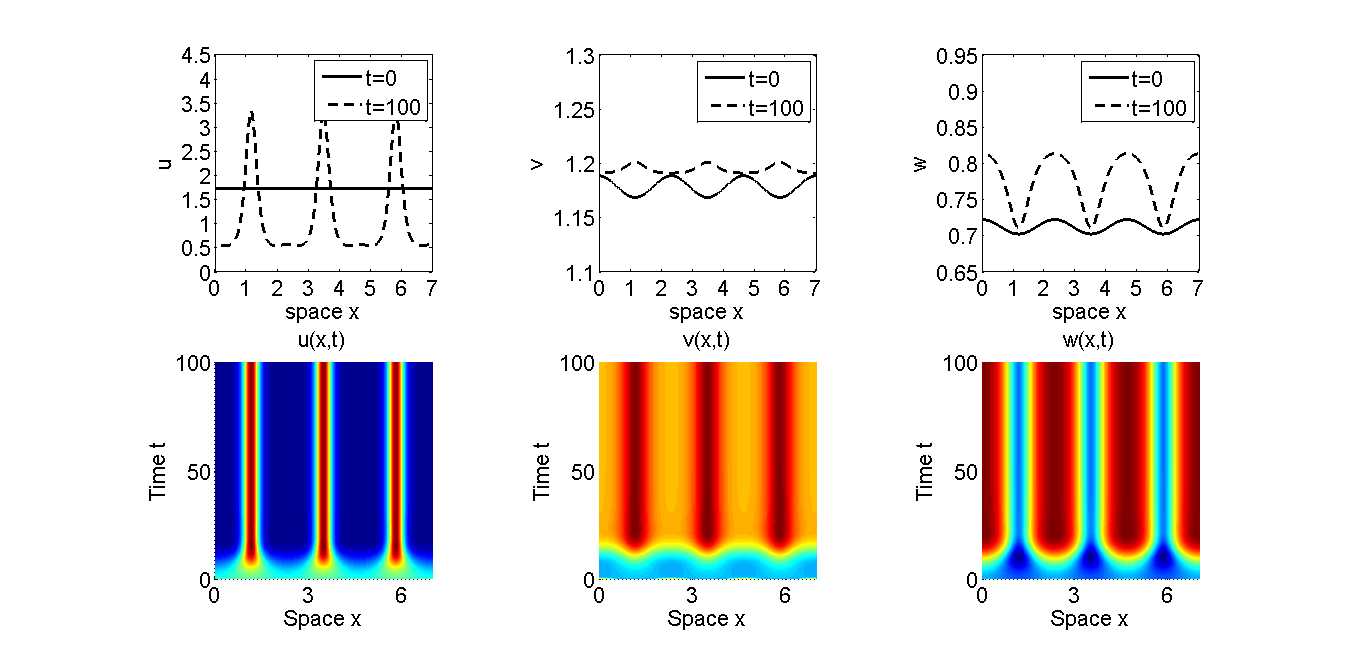}
  \caption{Formation of stationary patterns of (\ref{31}) over $\Omega=(0,7)$.  System parameters in all the graphes are taken to be the same as in Table \ref{table1} except that $\chi=8$, which is larger than $\chi^S_{k_0}\approx6.05$ given in Table \ref{table1}.  Initial data are $(u_0,v_0,w_0)=(\bar u, \bar v,\bar w)+(0.01,0.01,0.01)\cos \pi x$, while the stable pattern has wave mode $\cos \frac{6\pi x}{7}$.  These graphes support our stability analysis of the bifurcating solutions.}\label{figure2}
\end{figure}

\begin{table}[h!]
\centering
\begin{tabular}{|c|c|c|c|c|c|c|c|c|}
  \hline
Interval length $L$& 1 & 2 & 3& 4&5&6&7&8\\ \hline
    $k_0$& 1 & 2 & 3& 4&5&5&6&7\\ \hline
    $\chi_0=\chi^S_{k_0}$&6.09&6.09&6.09&6.09&6.09&6.08&6.05&6.04\\ \hline
Interval length $L$&9&10&11&12&13&14&15&16\\ \hline
    $k_0$&8&9&10&11&12&13&14&15\\ \hline
    $\chi_0=\chi^S_{k_0}$&6.04&6.03&6.04&6.04&6.04&6.04&6.04&6.04\\ \hline
\end{tabular}
\caption{Stable wave mode numbers and the corresponding bifurcation values $\chi_0$ for different interval lengthes.  System parameters are chosen to be the same as those in Table \ref{table1}.  We see that the threshold value $\chi_0$ is always achieved at the steady state bifurcation point $\chi^S_{k_0}$.  This table also indicates that larger intervals support higher wave modes.}
\label{table2}
\end{table}
Numerical simulations in Figure \ref{figure3} are devoted to verifying that $(\bar u,\bar v,\bar w)$ loses its stability to steady state bifurcations when $\chi_0$ is achieved at $\chi^S_{k_0}$ for this set of system parameters.  These simulations support the results on the stability of the bifurcating solutions in Theorem \ref{theorem43}.
\begin{figure}[h!]
\centering
\includegraphics[width=\textwidth,height=2.5in]{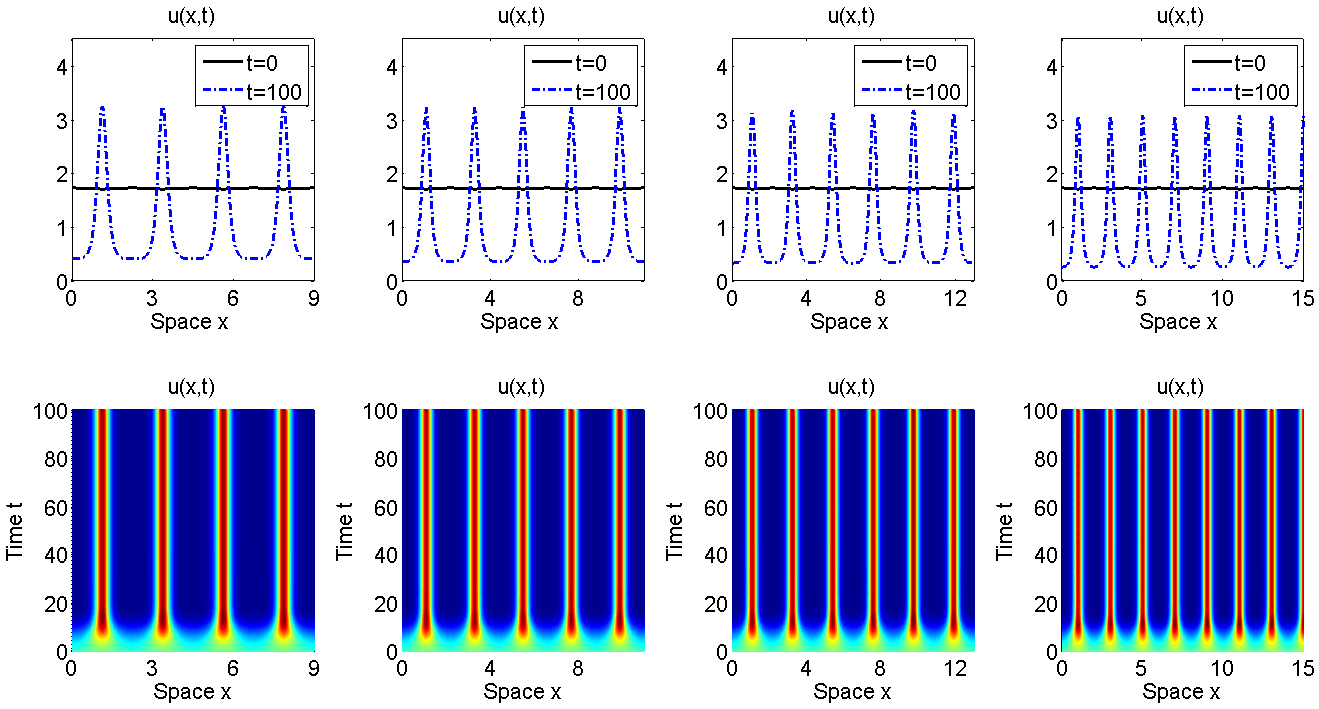}
  \caption{Formation of stationary patterns of (\ref{31}) over intervals with lengthes $L=9$, $11$, $13$ and $15$.  System parameters here are taken to be the same as those in Table \ref{table1} except that $\chi=8$, which is slightly larger than $\chi^S_{k_0}\approx6.04$ given in Table \ref{table2}.  Initial data are $(u_0,v_0,w_0)=(\bar u, \bar v,\bar w)+(0.01,0.01,0.01)\cos \pi x$.  These graphes support our stability analysis of the bifurcating solutions and indicate that large intervals support more aggregates than small intervals.}\label{figure3}
\end{figure}

\subsection{Time periodic patterns}
Our next set of numerical results are provided to demonstrate that (\ref{31}) admits time--periodic patterns through Hopf bifurcations.  To this end, we set system parameters to be $d_1=d_3=1$, $d_2=0.1$, $\alpha_1=0.02$, $\alpha_2=0.04$, $\alpha_3=8$ and $\beta_1=0.05$, $\beta_2=\beta_{31}=\beta_{32}=0.5$, while the sensitivity function is chosen $\phi(w)=0.05w(0.2-w)$.  We shall show that the equilibrium $(\bar u,\bar v,\bar w)=(2.13,6.65,0.45)$ loses its stability to time--periodic orbits.  Table \ref{table3} lists the values of $\chi^S_k$ and $\chi^H_k$ when the interval length is $L=7$.  We see that the threshold value $\chi_0$ is achieved at $\chi^H_3\approx 92.57$.
\begin{table}[h!]
\centering
\begin{tabular}{|c|c|c|c|c|c|c|c|c|c|} \hline
$k$& 1 & 2 & \textbf{3} & 4 & 5& 6& 7&8&9\\ \hline
$\chi^S_{k}$&106.4&98.63&107.32&122.53&143.15&169.05&200.24&236.80&278.76 \\ \hline
$\chi^H_{k}$&186.37&96.73&\textbf{92.57}&105.46&127.03&155.24&189.40&229.24&274.64\\
\hline
\end{tabular}
\caption{Values of $\chi^S_k$ in (\ref{34}) and $\chi^H_k$ in (\ref{35}) for $L=7$.  System parameters are $d_1=d_3=1$, $d_2=0.01$, $\alpha_1=0.02$, $\alpha_2=0.04$, $\alpha_3=8$ and $\beta_1=0.05$, $\beta_2=\beta_{31}=\beta_{32}=0.5$, while the sensitivity function is $\phi(w)=w(0.2-w)$.  We see that $\min_{k\in\mathbb N^+}\{\chi^S_k,\chi^H_k\}=\chi^H_3$.  Therefore $(\bar u,\bar v,\bar w)$ loses its stability to the time--periodic solutions with wave mode $\cos \frac{3\pi x}{7}$.  This is numerically verified in Figure \ref{figure4}.}
\label{table3}\vspace*{-10pt}
\end{table}
In Figure \ref{figure4}, we plot the numerical solutions of (\ref{31}) subject to initial data $(u_0,v_0,w_0)=(\bar u, \bar v,\bar w)+(0.01,0.01,0.01)\cos \pi x$, small perturbations from the homogeneous equilibrium.  The initial data have spatial profiles in the form of $\cos \pi x$, but the spatial--temporal patterns develop according to the stable time--periodic patterns with wave mode $\cos \frac{3\pi x}{7}$.
\begin{figure}[h!]
\centering
\includegraphics[width=\textwidth,height=2.5in]{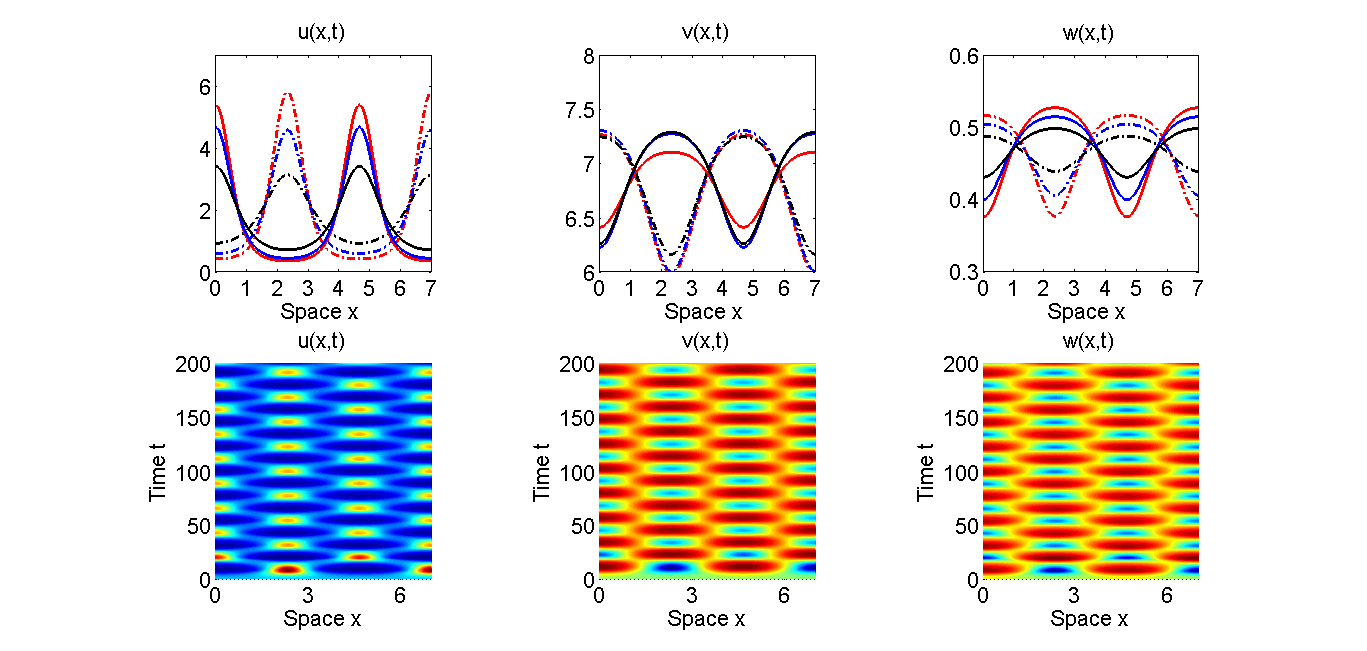}
  \caption{Formation of time--periodic spatial patterns of (\ref{31}) over $\Omega=(0,7)$. System parameters in all the graphes are taken to be the same as in Table \ref{table3} except that $\chi=120$, which is slightly larger than $\chi^H_{k_0}\approx 92.57$ given in Table \ref{table3}.   Initial data are $(u_0,v_0,w_0)=(\bar u, \bar v,\bar w)+(0.01,0.01,0.01)\cos \pi x$, however the stable oscillating patterns have spatial profile $\cos\frac{3\pi x}{7}$, which emerge periodically.  These plots support our stability analysis in Section \ref{section5}.}\label{figure4}
\end{figure}

\begin{table}[h!]
\centering
\begin{tabular}{|c|c|c|c|c|c|c|c|c|}
  \hline
Interval length $L$& 2 & 3& 4&5&6&7&8&9\\ \hline
    $k_0$& 1 & 2 & 3& 4&5&5&6&7\\ \hline
    $\chi_0=\chi^H_{k_0}$&97.68&92.13&97.68&91.49&92.13&92.57&91.15&92.13\\ \hline
Interval length $L$&10&11&12&13&14&15&16&17\\ \hline
    $k_0$&8&9&10&11&12&13&14&15\\ \hline
    $\chi_0=\chi^H_{k_0}$&91.5&91.2&91.13&91.21&91.30&91.49&91.15&91.40\\ \hline
\end{tabular}
\caption{Stable wave mode numbers and the corresponding bifurcation values $\chi_0$ for different interval lengthes, where system parameters are chosen to be the same as those in Table \ref{table3}.  We see that the threshold value $\chi_0$ is always achieved at the Hopf bifurcation point $\chi^H_{k_0}$.}
\label{table4}
\end{table}
Numerical simulations in Figure \ref{figure5} are devoted to verifying that the $(\bar u,\bar v,\bar w)$ loses its stability to Hopf bifurcations when $\chi_0$ is achieved at $\chi^H_{k_0}$, where system parameters are taken to be the same as those in Table \ref{table3}.  In particular, we select the interval lengths to be $L=9$, 11, 13 and 15 respectively.  These simulations support our results on the stability of the Hopf bifurcating solutions obtained in Theorem \ref{theorem52}.
\begin{figure}[h!]
\centering
\includegraphics[width=\textwidth,height=2.5in]{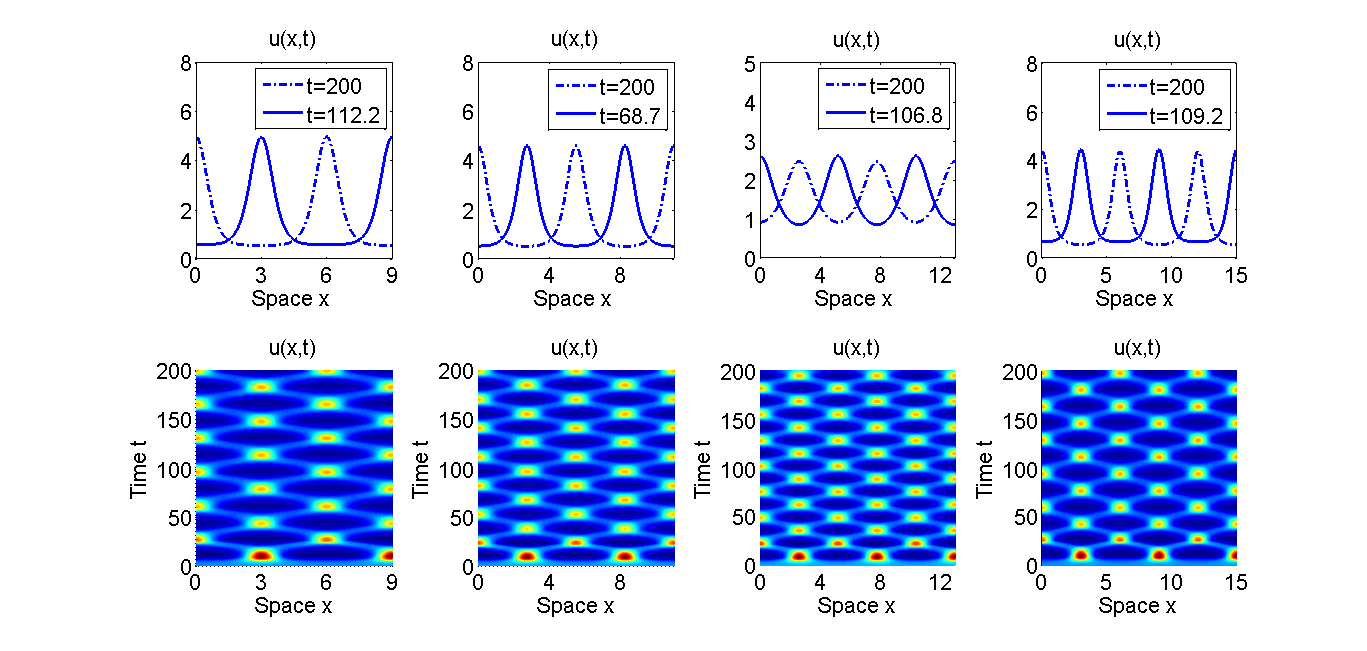}
  \caption{Formation of time--periodic spatial patterns of (\ref{31}) over intervals with lengthes $L=9$, $11$, $13$ and $15$ respectively.  System parameters in all the graphes are taken to be the same as in Table \ref{table3} except that $\chi=120$, which is slightly larger than $\chi^H_{k_0}$ given in Table \ref{table4}.   Initial data are $(u_0,v_0,w_0)=(\bar u, \bar v,\bar w)+(0.01,0.01,0.01)\cos \pi x$.  These graphes support our stability analysis of the bifurcating solutions.}\label{figure5}
\end{figure}

\subsection{Other interesting patterns}
In Figure \ref{figure6}, we plot the formation of stable boundary spikes of (\ref{31}) through traveling wave over $\Omega=(0,7)$.  System parameters are taken to be $d_1=5$, $d_2 = 0.5$, $d_3 = 1$, $\alpha_1=\alpha_2=0.05$, $\alpha_3 = 4$, $\beta_1 = 0.3$, $\beta_2 = 0.5$, $\beta_{31}=1, \beta_{32}=\frac{1}{3}$, $\xi=0.1$ and $\phi(w)=w(0.1-w)$.  Prey--taxis rate $\chi=3000$ is greatly larger than the critical bifurcation value $\chi_0=956.79$.  We observe that the boundary spike is developed through traveling wave solution.  However, rigorous analysis of qualitative properties of the propagating solutions is out of the scope of our paper.
\begin{figure}[h!]
\centering
\includegraphics[width=\textwidth,height=2.5in]{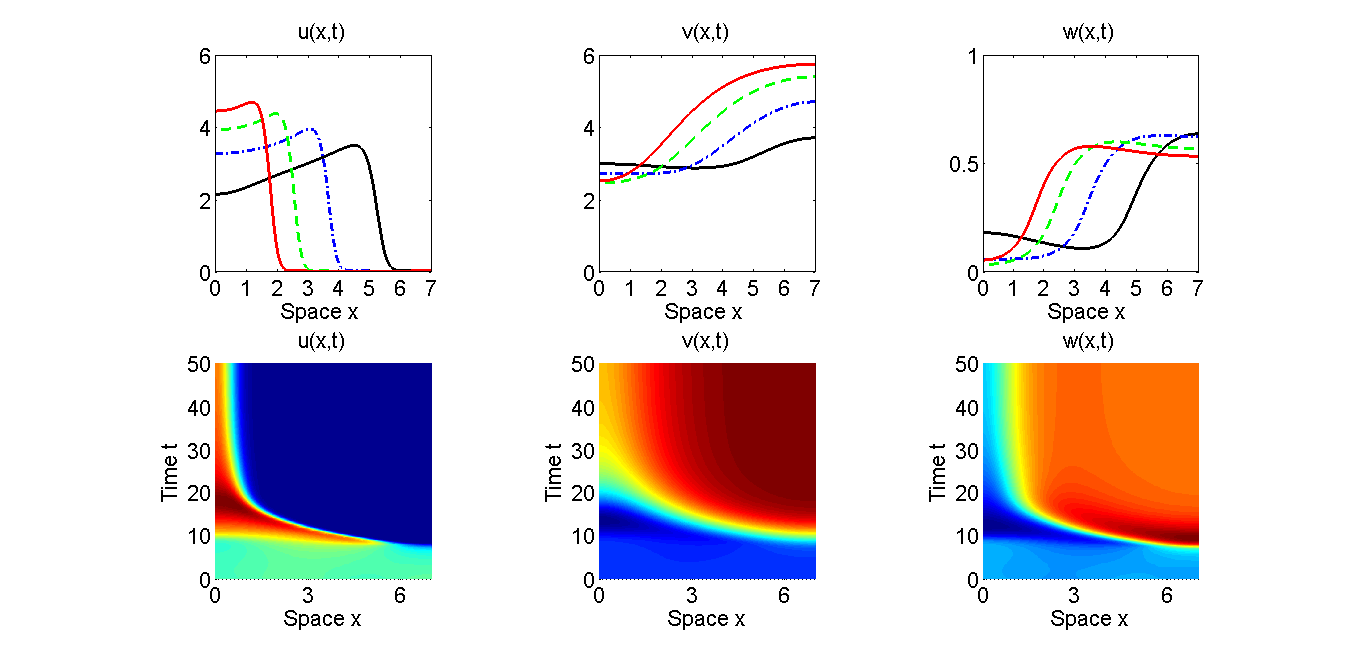}
  \caption{Formation and development of boundary spike through wave propagation.  Initial data are $(u_0,v_0,w_0)=(\bar u, \bar v,\bar w)+(0.01,0.01,0.01)\cos \pi x$.  Prey--taxis rate $\chi=3000$ which is far away from the critical bifurcation value $\chi_0=956.79$.}\label{figure6}
\end{figure}
Finally, we present numerical simulations in Figure \ref{figure7} to show that when the prey--taxis is much larger than $\chi_0$, (\ref{31}) admits some other interesting and striking dynamics such as merging and emerging of spikes, irregular spatial--temporal oscillations etc.  For example, Subplot \emph{(i)} of Figure \ref{figure7} shows that there occurs a coarsening process in (\ref{31}) in which interior spikes of $u(x,t)$ shift to the boundary or the center to merge into another stable spike.  We also observe the spontaneous emergence of stable interior spikes at time $t\approx 100$.  All the parameters and the initial data in (\ref{31}) are chosen to be the same as in Figure \ref{figure2}, except that $\chi=124$, which is much far away from $\chi_0\approx6.05$.  In subplot \emph{(ii)}, when the system parameters and the initial data are chosen to be the same as in Figure \ref{figure4}, except that $\chi=160$, (\ref{31}) initially develops time--periodic spatial patterns which are metastable.  Then the oscillating patterns develop into stable stationary spikes.  Time--periodic patterns and spontaneous initiation of interior multiple spikes are observed in subplots \emph{(iii)} and \emph{(iv)}.
\begin{figure}[h!]
\centering
\includegraphics[width=\textwidth,height=1.8in]{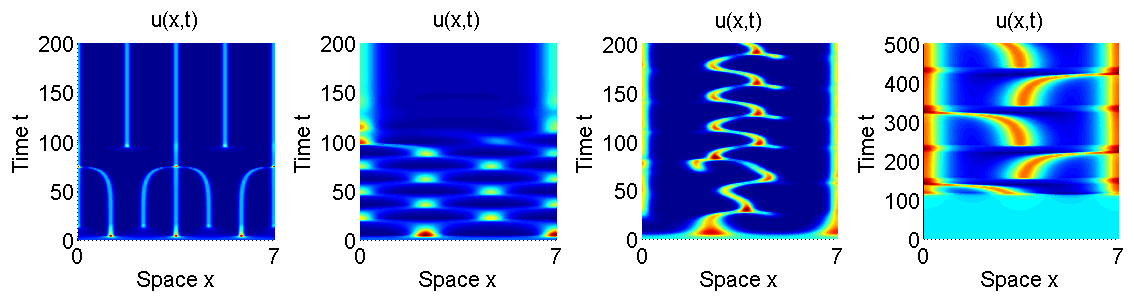}
  \caption{Pattern formations in (\ref{31}) due to the effect of large prey--taxis rate $\chi$.  Various interesting and complex spatial--temporal dynamics are observed in this system.}\label{figure7}
\end{figure}

\section{Conclusions and discussions}\label{section7}

Our paper investigates population dynamics of a two--predator and one--prey model with prey--taxis, given by a $3\times3$ reaction--advection--diffusion system.  It is proved that the system admits positive classical solution which is global and uniformly bounded in time over 1D or 2D bounded domains.  The same results are obtained for its parabolic--parabolic--elliptic counterpart for domains of arbitrary space dimension.

Stability of the unique positive equilibrium is studied when the domain is a finite interval.  It shows that both prey--taxis $\chi$ and sensitivity function $\phi$ determines the linearized stability of this equilibrium.  It is known (see \cite{LHL2} e.g.) that, in contrast to chemotaxis \cite{HP,Ho} or advection for competition system \cite{WGY},  prey--taxis stabilizes constant equilibrium for one--predator and one--prey system.  However, our result reveals that this is true only when there is no group defense in the preys, i.e., a huge amount of preys can aggregate and keep their predators away from the habitat.  If the predators retreat from the habitat, which can be modeled by choosing $\phi(\bar w)<0$, prey--taxis destabilizes the constant equilibrium, which becomes unstable as $\chi$ surpasses $\chi_0=\min_{k\in\mathbb N}\{\chi^S_k,\chi^H_k\}$ given by (\ref{33}).  Therefore group defense is an important mechanism in the formation of nontrivial patterns in (\ref{11}).

We have obtained both stationary and time--periodic spatial patterns to the system over 1D bounded interval through steady state bifurcation at $\chi=\chi^S_k$ and Hopf bifurcation at $\chi=\chi^H_k$ respectively.  Stabilities of these bifurcating solutions are also investigated rigorously.  It is proved that steady state bifurcation occurs at $(\bar u,\bar v,\bar w,\chi^S_k)$ for each $k\in\mathbb N^+$, however, only the $k_0$--branch that turns to the right is stable if $\chi_0=\chi^S_{k_0}<\min_{k\in\mathbb N}\chi^H_k$.  In other words, if the steady state bifurcation curve is stable, it must be on the left most branch on the $\chi$--axis.  On the other hand, Hopf bifurcation $(\bar u,\bar v,\bar w,\chi^H_k)$ only if $\chi^H_k<\chi^S_k$, while only the left most branch can be stable.  Moreover, our analysis indicates that small intervals only supports steady state bifurcations while large intervals may lead to Hopf bifurcation when the system parameters are chosen properly.  Extensive numerical simulations are performed to illustrate and support our theoretical findings.  Apparently, the formation of these nontrivial patterns is due to the effect of large prey--taxis and prey group defense effect.

Global existence and bounded are obtained for (\ref{11}) over 2D and it is interesting to ask the same question for the system over higher dimensions.  Logistic decays in the kinetics help to prevent finite or infinite time blow--ups, however, whether or not they are sufficient over higher dimensions, in particular when the prey--taxis rate is large, is unknown in the literature.

Our bifurcation analysis is based on the local versions in \cite{A,CR} etc.  From the viewpoint of mathematical analysis, it is interesting to investigate the behavior or shape of these local branches, in particular in the study of positive steady states when large prey--taxis may lead to striking structures such as spikes and layers, etc.  For example, according to the global theory of Rabinowitz \cite{Ra} and its developed version in \cite{SW}, global continuum of $\Gamma_k(s)$ either intersects with the $\chi$--axis at another bifurcating point, or extends to infinity, or intersects with a \emph{singular} point.  Populations growth terms in (\ref{31}) inhibits the application of topology argument developed in \cite{CKWW,WX} etc.

When the prey--taxis rate is around bifurcation values, our results provide almost a complete understanding of the spatial--temporal dynamics of (\ref{11}) over 1D.  Further research is needed on its pattern formations when $\chi$ is away from $\chi_0$ and in particular when it is sufficiently large.  For example, rigorous analysis of the profile of the spikes obtained in numerical simulations can be an interesting problem to probe in the future.  There are also some interesting problems such as the investigation of chaotic dynamics in (\ref{11}) or bifurcation analysis of (\ref{11}) over higher dimensions.  It is also meaningful to ask about the biologically realistic traveling wave solutions to (\ref{31}), compared to those for the system without prey--taxis obtained in \cite{LWZY}.

%We comment on some notable limitations of our model and the studies.  Both our analyses of stationary and time--periodic patterns are obtained from the system over only one space dimension.  The restriction to one dimension was made because it made our calculations simpler and illustrations conceptually clearer.  Nonetheless, many of our results hold over multi--dimensional domains with $(\frac{k\pi}{L})^2$ replaced by the $k$--th Neumann eigenvalue of $-\Delta$.

%Theorem 3.2 further
%implies K2 > 0 if L is small and d1; d2 being comparably large, hence the monotone
%solution must be stable, at least when  is slightly larger than 0. On the other
%hand, when the interval length L is large or the chemical decay rate  is large, the
%small amplitude bifurcating solution (uk(s; x); vk(s; x);wk(s; x)) may be unstable.
%Therefore, it is natural to expect the formation of stable steady states of (1.1) with
%large amplitude, such as boundary layer, interior spikes, etc., or the formation of
%time--periodic spatial patterns. Rigourous analysis for these spiky solutions requires
%nontrivial mathematical tools and it is out of the scope of this paper. Numerical
%simulations are presented to illustrate the emergence of stable spiky patterns.

\section{Appendix}\label{section8}
This section is devoted to evaluating $\mathcal K_2$ in terms of system parameters in (\ref{41}).  We know from Theorem \ref{theorem52} and Theorem \ref{theorem43} that $\mathcal K_1=0$ in (\ref{418}) and the steady state bifurcation branch $\Gamma_k(s)$ is pitch--fork; moreover $\mathcal K_2$ determines the turning direction hence the stability of the steady state bifurcation $\Gamma_k(s)$ around $\chi^S_k$.    For the generality, we obtain the general expression of $\mathcal K_2$ for each branch $\Gamma_k(s)$, $k\in\mathbb N^+$.

We collect and equate $\mathcal{O}(s^3)$ terms in (\ref{41}) through (\ref{418}) to have that
\begin{align}\label{81}
\begin{split}
\begin{cases}
d_1\varphi''_2=\chi_k\bar{u}\phi(\bar{w})\gamma''_2 -\mathcal{K}_2\bar{u}\phi(\bar{w})(\frac{k\pi}{L})^2\cos{\frac{k\pi x}{L}}+({\alpha_1}\varphi_2-\beta_1\gamma_2 )\bar{u}\\
\hspace{1.4cm}+\Big((2{\alpha_1}P_k-\beta_1)\varphi_1 -\beta_1P_k\gamma_1 \Big )\cos{\frac{k\pi x}{L}}+\chi_k C_1,\\
d_2\psi''_2=\xi\bar{v}\phi(\bar{w})\gamma''_2+\Big((2{\alpha_2}Q_k-\beta_2)\psi_1 -\beta_2Q_k\gamma_1 \Big )\cos{\frac{k\pi x}{L}}\\
\hspace{1.4cm}+({\alpha_2}\psi_2-\beta_2\gamma_2 )\bar{v}+\xi C_2,\\
d_3\gamma''_2= \Big(\beta_{31}\varphi_1 +\beta_{32}\psi_1+ (\beta_{31}P_k +\beta_{32}Q_k+\frac{2\alpha_3}{K_3})\gamma_1\Big)\cos{\frac{k\pi x}{L}}\\
\hspace{1.4cm}+(\beta_{31}\varphi_2 +\beta_{32}\psi_2 {\alpha_3})\bar{w},\\
\varphi'_2(x)=\psi'_2(x)=\gamma'_2(x)=0, ~~x=0,L,
\end{cases}
\end{split}
\end{align}
where
\begin{align*}
C_1=&-\bar{u}\phi'(\bar{w})(\frac{k\pi}{L})^2\gamma_1\cos{\frac{k\pi x}{L}}-\Big(2\bar{u}\phi'(\bar{w})+P_k\phi(\bar{w})\Big)(\frac{k\pi}{L})\gamma'_1\sin{\frac{k\pi x}{L}}\\
&-\phi(\bar{w})(\frac{k\pi}{L})^2\varphi_1\cos{\frac{k\pi x}{L}}-\phi(\bar{w})(\frac{k\pi}{L})\varphi'_1\sin{\frac{k\pi x}{L}}+\Big(\bar{u}\phi'(\bar{w})\\
&+P_k\phi(\bar{w})\Big)\gamma''_1\cos{\frac{k\pi x}{L}}+\Big(\phi''(\bar{w})\bar{w}+2P_k\phi'(\bar{w})\Big)(\frac{k\pi}{L})^2\sin^2{\frac{k\pi x}{L}}\cos{\frac{k\pi x}{L}}\\
&-\Big(\frac{1}{2}\phi''(\bar{w})\bar{w}+P_k\phi'(\bar{w})\Big )(\frac{k\pi}{L})^2\cos^3{\frac{k\pi x}{L}},
\end{align*}
and
\begin{align*}
C_2=&-\bar{v}\phi'(\bar{w})(\frac{k\pi}{L})^2\gamma_1\cos{\frac{k\pi x}{L}}-\Big(2\bar{v}\phi'(\bar{w})+Q_k\phi(\bar{w})\Big)(\frac{k\pi}{L})\gamma'_1\sin{\frac{k\pi x}{L}}\\
&-\phi(\bar{w})(\frac{k\pi}{L})^2\psi_1\cos{\frac{k\pi x}{L}}-\phi(\bar{w})(\frac{k\pi}{L})\psi'_1\sin{\frac{k\pi x}{L}}+\Big(\bar{v}\phi'(\bar{w})\\
&+Q_k\phi(\bar{w})\Big)\gamma''_1\cos{\frac{k\pi x}{L}}+\Big(\phi''(\bar{w})\bar{w}+2Q_k\phi'(\bar{w})\Big)(\frac{k\pi}{L})^2\sin^2{\frac{k\pi x}{L}}\cos{\frac{k\pi x}{L}}\\
&-\Big(\frac{1}{2}\phi''(\bar{w})\bar{w}+Q_k\phi'(\bar{w})\Big )(\frac{k\pi}{L})^2\cos^3{\frac{k\pi x}{L}}.
\end{align*}
We multiply the first equation in (\ref{81}) by $\cos{\frac{k\pi x}{L}}$ and integrate it over $(0,L)$, which implies that
\begin{align*}%\label{82}
\begin{split}
\frac{\mathcal{K}_2\bar{u}\phi(\bar{w})(k\pi)^2}{2L}=&\Big(d_1(\frac{k\pi}{L})^2+{\alpha_1}\bar{u}\Big)\int_{0}^{L}\varphi_2\cos{\frac{k\pi x}{L}}dx-\Big(\chi_k\bar{u}\phi(\bar{w})(\frac{k\pi}{L})^2+\beta_1\bar{u} \Big)\\
&\cdot\int_{0}^{L}\gamma_2\cos{\frac{k\pi x}{L}}dx+\Big({\alpha_1}P_k-\frac{1}{2}\beta_1+\frac{1}{2}\chi_k\phi(\bar{w})(\frac{k\pi}{L})^2 \Big)\\
&\cdot\int_{0}^{L}\varphi_1\cos{\frac{2k\pi x}{L}}dx-\frac{1}{2}\Big(\chi_k\bar{u}\phi'(\bar{w})(\frac{k\pi}{L})^2+\beta_1P_k+\chi_kP_k\phi(\bar{w}) \Big)%\\
\end{split}
\end{align*}\begin{align}\label{82}
\begin{split}&\cdot\int_{0}^{L}\gamma_1\cos{\frac{2k\pi x}{L}}dx+\Big({\alpha_1}P_k-\frac{1}{2}\beta_1-\frac{1}{2}\chi_k\phi(\bar{w})(\frac{k\pi}{L})^2\Big)\int_{0}^{L}\varphi_1dx\\
&-\frac{1}{2}\Big( \chi_k\bar{u}\phi'(\bar{w})(\frac{k\pi}{L})^2+\beta_1P_k\Big)\int_{0}^{L}\gamma_1dx.
\end{split}
\end{align}
On the other hand, we test the second and the third equations in (\ref{81}) by same method to obtain
\begin{align}\label{83}
\begin{split}
0=&\Big(d_2(\frac{k\pi}{L})^2+{\alpha_2}\bar{v}\Big)\int_{0}^{L}\psi_2\cos{\frac{k\pi x}{L}}dx-\Big(\xi\bar{u}\phi(\bar{w})(\frac{k\pi}{L})^2+\beta_2\bar{v} \Big)\\
&\cdot\int_{0}^{L}\gamma_2\cos{\frac{k\pi x}{L}}dx+\Big({\alpha_2}Q_k-\frac{1}{2}\beta_2+\frac{1}{2}\xi\phi(\bar{w})(\frac{k\pi}{L})^2 \Big)\\
&\cdot\int_{0}^{L}\psi_1\cos{\frac{2k\pi x}{L}}dx-\frac{1}{2}\Big(\xi\bar{v}\phi'(\bar{w})(\frac{k\pi}{L})^2+\beta_2Q_k+\xi Q_k\phi(\bar{w}) \Big)\\
&\cdot\int_{0}^{L}\gamma_1\cos{\frac{2k\pi x}{L}}dx+\Big({\alpha_2}Q_k-\frac{1}{2}\beta_2-\frac{1}{2}\xi\phi(\bar{w})(\frac{k\pi}{L})^2\Big)\int_{0}^{L}\psi_1dx\\
&-\frac{1}{2}\Big( \xi\bar{u}\phi'(\bar{w})(\frac{k\pi}{L})^2+\beta_2Q_k\Big)\int_{0}^{L}\gamma_1dx,
\end{split}
\end{align}
and
\begin{align}\label{84}
\begin{split}
0=& \beta_{31}\bar{w}\int_{0}^{L}\varphi_2\cos{\frac{k\pi x}{L}}dx+\beta_{32}\bar{w}\int_{0}^{L}\psi_2\cos{\frac{k\pi x}{L}}dx +d_3(\frac{k\pi}{L})^2 \int_{0}^{L}\gamma_2\cos{\frac{k\pi x}{L}}dx \\
&+\frac{\beta_{31}}{2}\int_{0}^{L}\varphi_1\cos{\frac{2k\pi x}{L}}dx+\frac{\beta_{32}}{2}\int_{0}^{L}\psi_1\cos{\frac{2k\pi x}{L}}dx\\
&+\frac{1}{2}(\beta_{31}P_k +\beta_{32}Q_k+\frac{2\alpha_3}{K_3})\int_{0}^{L}\gamma_1\cos{\frac{2k\pi x}{L}}dx+\frac{\beta_{31}}{2}\int_{0}^{L}\varphi_1dx\\
&+\frac{\beta_{32}}{2}\int_{0}^{L}\psi_1dx+\frac{1}{2}(\beta_{31}P_k +\beta_{32}Q_k+\frac{2\alpha_3}{K_3})\int_{0}^{L}\gamma_1dx.
\end{split}
\end{align}
Moreover, from $(\varphi_2,\psi_2,\gamma_2)\in\mathcal{Z}$ defined in (\ref{414}), it follows that
\begin{align}\label{85}
P_k\int_{0}^{L}\varphi_2\cos{\frac{k\pi x}{L}}dx+Q_k\int_{0}^{L}\psi_2\cos{\frac{k\pi x}{L}}dx+\int_{0}^{L}\gamma_2\cos{\frac{k\pi x}{L}}dx=0.
\end{align}
We combine (\ref{83})--(\ref{85}) in the following system
\begin{align}\label{86}
\begin{pmatrix}
0 & d_2(\frac{k\pi}{L})^2+{\alpha_2}\bar{v} & -\xi\bar{v}\phi(\bar{w})(\frac{k\pi}{L})^2-\beta_1\bar{u}\\
\beta_{31}\bar{w} & \beta_{32}\bar{w} & d_3(\frac{k\pi}{L})^2+{\alpha_3}\bar{w}\\
P_k & Q_k & 1
\end{pmatrix}
\begin{pmatrix}
\int_{0}^{L}\varphi_2\cos{\frac{k\pi x}{L}}dx\\
\int_{0}^{L}\psi_2\cos{\frac{k\pi x}{L}}dx\\
\int_{0}^{L}\gamma_2\cos{\frac{k\pi x}{L}}dx
\end{pmatrix}
=
\begin{pmatrix}
M_1\\M_2\\0
\end{pmatrix},
\end{align}
where
\begin{align*}%\label{87}
\begin{split}
M_1=&-\Big({\alpha_2}Q_k-\frac{1}{2}\beta_2+\frac{1}{2}\xi\phi(\bar{w})(\frac{k\pi}{L})^2 \Big)\int_{0}^{L}\psi_1\cos{\frac{2k\pi x}{L}}dx\\
&+\frac{1}{2}\Big(\xi\bar{v}\phi'(\bar{w})(\frac{k\pi}{L})^2+\beta_2Q_k+\xi Q_k\phi(\bar{w}) \Big)\int_{0}^{L}\gamma_1\cos{\frac{2k\pi x}{L}}dx%\\
\end{split}
\end{align*}\begin{align}\label{87}
\begin{split}&-\Big({\alpha_2}Q_k-\frac{1}{2}\beta_2-\frac{1}{2}\xi\phi(\bar{w})(\frac{k\pi}{L})^2\Big)\int_{0}^{L}\psi_1dx\\
&+\frac{1}{2}\Big( \xi\bar{u}\phi'(\bar{w})(\frac{k\pi}{L})^2+\beta_2Q_k\Big)\int_{0}^{L}\gamma_1dx,
\end{split}
\end{align}
and
\begin{align}\label{88}
\begin{split}
M_2=&-\frac{\beta_{31}}{2}\int_{0}^{L}\varphi_1\cos{\frac{2k\pi x}{L}}dx-\frac{\beta_{32}}{2}\int_{0}^{L}\psi_1\cos{\frac{2k\pi x}{L}}dx\\
&-\frac{1}{2}(\beta_{31}P_k +\beta_{32}Q_k+\frac{2\alpha_3}{K_3})\int_{0}^{L}\gamma_1\cos{\frac{2k\pi x}{L}}dx-\frac{\beta_{31}}{2}\int_{0}^{L}\varphi_1dx\\
&-\frac{\beta_{32}}{2}\int_{0}^{L}\psi_1dx-\frac{1}{2}(\beta_{31}P_k +\beta_{32}Q_k+\frac{2\alpha_3}{K_3})\int_{0}^{L}\gamma_1dx.
\end{split}
\end{align}
Solving (\ref{86}) by Cramer's rule, we obtain that
\begin{align}\label{89}
\int_{0}^{L}\varphi_2\cos{\frac{k\pi x}{L}}dx=\frac{\vert \mathcal{A}_1 \vert}{\vert \mathcal{A}_0 \vert}, \int_{0}^{L}\psi_2\cos{\frac{k\pi x}{L}}dx=\frac{\vert \mathcal{A}_2 \vert}{\vert \mathcal{A}_0 \vert},
\end{align}
and
\begin{align}\label{810}
\int_{0}^{L}\gamma_2\cos{\frac{k\pi x}{L}}dx=\frac{\vert \mathcal{A}_3 \vert}{\vert \mathcal{A}_0 \vert}
\end{align}
where
\begin{align}\label{811}
\mathcal{A}_1=&
\begin{pmatrix}
M_1 & d_2(\frac{k\pi}{L})^2+{\alpha_2}\bar{v} & -\xi\bar{v}\phi(\bar{w})(\frac{k\pi}{L})^2-\beta_1\bar{u}\\
M_2 & \beta_{32}\bar{w} & d_3(\frac{k\pi}{L})^2+{\alpha_3}\bar{w}\\
0 & Q_k & 1
\end{pmatrix},\nonumber
\\
\mathcal{A}_2=&
\begin{pmatrix}
0 & M_1 & -\xi\bar{v}\phi(\bar{w})(\frac{k\pi}{L})^2-\beta_1\bar{u}\\
\beta_{31}\bar{w} & M_2 & d_3(\frac{k\pi}{L})^2+{\alpha_3}\bar{w}\\
P_k & 0 & 1
\end{pmatrix},
\\
\mathcal{A}_3=&
\begin{pmatrix}
0 & d_2(\frac{k\pi}{L})^2+{\alpha_2}\bar{v} & M_1\\
\beta_{31}\bar{w} & \beta_{32}\bar{w} & M_2\\
P_k & Q_k & 0
\end{pmatrix},\nonumber
\end{align}
and
\begin{align}\label{812}
\mathcal{A}_0=
\begin{pmatrix}
0 & d_2(\frac{k\pi}{L})^2+{\alpha_2}\bar{v} & -\xi\bar{v}\phi(\bar{w})(\frac{k\pi}{L})^2-\beta_1\bar{u}\\
\beta_{31}\bar{w} & \beta_{32}\bar{w} & d_3(\frac{k\pi}{L})^2+{\alpha_3}\bar{w}\\
P_k & Q_k & 1
\end{pmatrix}.
\end{align}

Due to the Neumann boundary conditions and $\mathcal{K}_1=0$, integrating (\ref{222}) by parts yields
\begin{align*}
\begin{cases}
{\alpha_1}\bar{u}\int_{0}^L \varphi_1dx-\beta_1\bar{u}\int_{0}^L \gamma_1dx=-\frac{L}{2}P_k({\alpha_1}P_k-\beta_1),\\
{\alpha_2}\bar{v}\int_{0}^L \psi_1dx-\beta_2\bar{v}\int_{0}^L \gamma_1dx=-\frac{L}{2}Q_k({\alpha_2}Q_k-\beta_2),\\
\beta_{31}\bar{w}\int_{0}^L \varphi_1dx+\beta_{32}\bar{w}\int_{0}^L \psi_1dx+{\alpha_3}\bar{w}\int_{0}^L \gamma_1dx=-\frac{L}{2}(\beta_{31}P_k+\beta_{32}Q_k+{\alpha_3}),
\end{cases}
\end{align*}
or equivalently
\begin{align}\label{813}
\begin{pmatrix}
{\alpha_1}\bar{u} & 0 & -\beta_1\bar{u}\\
0 & {\alpha_2}\bar{v} & -\beta_2\bar{v}\\
\beta_{31}\bar{w} & \beta_{32}\bar{w} & {\alpha_3}\bar{w}
\end{pmatrix}
\begin{pmatrix}
\int_{0}^L \varphi_1dx\\
\int_{0}^L \psi_1dx\\
\int_{0}^L \gamma_1dx
\end{pmatrix}
=
\begin{pmatrix}
-\frac{L}{2}P_k({\alpha_1}P_k-\beta_1)\\
-\frac{L}{2}Q_k({\alpha_2}Q_k-\beta_2)\\
-\frac{L}{2}(\beta_{31}P_k+\beta_{32}Q_k+{\alpha_3})
\end{pmatrix}.
\end{align}
Solving (\ref{813}) leads us to
\begin{align}\label{814}
\int_{0}^L \varphi_1dx=\frac{\vert \mathcal{B}_1 \vert}{\vert \mathcal{B}_0 \vert}, \int_{0}^L \psi_1dx=\frac{\vert \mathcal{B}_2 \vert}{\vert \mathcal{B}_0 \vert}, \int_{0}^L \gamma_1dx=\frac{\vert \mathcal{B}_3 \vert}{\vert \mathcal{B}_0 \vert},
\end{align}
where
\begin{align}\label{815}
\mathcal{B}_1=&
\begin{pmatrix}
-\frac{L}{2}P_k({\alpha_1}P_k-\beta_1) & 0 & -\beta_1\bar{u}\\
-\frac{L}{2}Q_k({\alpha_2}Q_k-\beta_2) & {\alpha_2}\bar{v} & -\beta_2\bar{v}\\
-\frac{L}{2}(\beta_{31}P_k+\beta_{32}Q_k+{\alpha_3}) & \beta_{32}\bar{w} & {\alpha_3}\bar{w}
\end{pmatrix},\nonumber
\\
\mathcal{B}_2=&
\begin{pmatrix}
{\alpha_1}\bar{u} & -\frac{L}{2}P_k({\alpha_1}P_k-\beta_1) & -\beta_1\bar{u}\\
0 & -\frac{L}{2}Q_k({\alpha_2}Q_k-\beta_2) & -\beta_2\bar{v}\\
\beta_{31}\bar{w} & -\frac{L}{2}(\beta_{31}P_k+\beta_{32}Q_k+{\alpha_3}) & {\alpha_3}\bar{w}
\end{pmatrix},
\\
\mathcal{B}_3=&
\begin{pmatrix}
{\alpha_1}\bar{u} & 0 & -\frac{L}{2}P_k({\alpha_1}P_k-\beta_1)\\
0 & {\alpha_2}\bar{v} & -\frac{L}{2}Q_k({\alpha_2}Q_k-\beta_2)\\
\beta_{31}\bar{w} & \beta_{32}\bar{w} & -\frac{L}{2}(\beta_{31}P_k+\beta_{32}Q_k+{\alpha_3})
\end{pmatrix},\nonumber
\end{align}
and
\begin{align}\label{816}
\mathcal{B}_0=
\begin{pmatrix}
{\alpha_1}\bar{u} & 0 & -\beta_1\bar{u}\\
0 & {\alpha_2}\bar{v} & -\beta_2\bar{v}\\
\beta_{31}\bar{w} & \beta_{32}\bar{w} & {\alpha_3}\bar{w}
\end{pmatrix}.
\end{align}
Multiplying (\ref{222}) by $\cos{\frac{2k\pi x}{L}}$ and integrating by parts yields
\begin{align}\label{817}
&\begin{pmatrix}
4d_1(\frac{k\pi}{L})^2+{\alpha_1}\bar{u} & 0 & -\chi_k\bar{u}\phi(\bar{w})(\frac{k\pi}{L})^2-\beta_1\bar{u} \\
0 & 4d_2(\frac{k\pi}{L})^2+{\alpha_2}\bar{v} & -\xi\bar{v}\phi(\bar{w})(\frac{k\pi}{L})^2-\beta_2\bar{v}\\
\beta_{31}\bar{w} & \beta_{32}\bar{w} & 4d_3(\frac{k\pi}{L})^2+{\alpha_3}\bar{w}\\
\end{pmatrix}
\begin{pmatrix}
\int_{0}^{L}\varphi_1\cos{\frac{2k\pi x}{L}}dx\\
\int_{0}^{L}\psi_1\cos{\frac{2k\pi x}{L}}dx\\
\int_{0}^{L}\gamma_1\cos{\frac{2k\pi x}{L}}dx
\end{pmatrix}\nonumber\\
&=
\begin{pmatrix}
M_3\\
M_4\\
0
\end{pmatrix},
\end{align}
where
\begin{align*}
M_3=\frac{\chi_k L}{2}(\frac{k\pi}{L})^2(\bar{u}\phi'(\bar{w})+P_k\phi(\bar{w}))-\frac{L}{4}({\alpha_1}P_k-\beta_1)P_k,
\end{align*}
and
\begin{align*}
M_4= \frac{\xi L}{2}(\frac{k\pi}{L})^2(\bar{v}\phi'(\bar{w})+Q_k\phi(\bar{w}))-\frac{L}{4}({\alpha_2}Q_k-\beta_2)Q_k.
\end{align*}

We have the solutions of (\ref{817}) that
\begin{align}\label{818}
\int_{0}^{L}\varphi_1\cos{\frac{2k\pi x}{L}}dx=\frac{\vert \mathcal{C}_1 \vert}{\vert \mathcal{C}_0 \vert},\int_{0}^{L}\psi_1\cos{\frac{2k\pi x}{L}}dx=\frac{\vert \mathcal{C}_2 \vert}{\vert \mathcal{C}_0 \vert},
\end{align}
and
\begin{align}\label{819}
\int_{0}^{L}\gamma_1\cos{\frac{2k\pi x}{L}}dx=\frac{\vert \mathcal{C}_3 \vert}{\vert \mathcal{C}_0 \vert},
\end{align}
where the notations are
\begin{align}\label{820}
\mathcal{C}_1=&
\begin{pmatrix}
M_3 & 0 & -\chi_k\bar{u}\phi(\bar{w})(\frac{k\pi}{L})^2-\beta_1\bar{u} \\
M_4 & 4d_2(\frac{k\pi}{L})^2+{\alpha_2}\bar{v} & -\xi\bar{v}\phi(\bar{w})(\frac{k\pi}{L})^2-\beta_2\bar{v}\\
0 & \beta_{32}\bar{w} & 4d_3(\frac{k\pi}{L})^2+{\alpha_3}\bar{w}\\
\end{pmatrix},\nonumber
\\
\mathcal{C}_2=&
\begin{pmatrix}
4d_1(\frac{k\pi}{L})^2+{\alpha_1}\bar{u} & M_3 & -\chi_k\bar{u}\phi(\bar{w})(\frac{k\pi}{L})^2-\beta_1\bar{u} \\
0 & M_4 & -\xi\bar{v}\phi(\bar{w})(\frac{k\pi}{L})^2-\beta_2\bar{v}\\
\beta_{31}\bar{w} & 0 & 4d_3(\frac{k\pi}{L})^2+{\alpha_3}\bar{w}\\
\end{pmatrix},
\\
\mathcal{C}_3=&
\begin{pmatrix}
4d_1(\frac{k\pi}{L})^2+{\alpha_1}\bar{u} & 0 & M_3 \\
0 & 4d_2(\frac{k\pi}{L})^2+{\alpha_2}\bar{v} & M_4\\
\beta_{31}\bar{w} & \beta_{32}\bar{w} & 0\\
\end{pmatrix},\nonumber
\end{align}
and
\begin{align}\label{821}
\mathcal{C}_0=
\begin{pmatrix}
4d_1(\frac{k\pi}{L})^2+{\alpha_1}\bar{u} & 0 & -\chi_k\bar{u}\phi(\bar{w})(\frac{k\pi}{L})^2-\beta_1\bar{u} \\
0 & 4d_2(\frac{k\pi}{L})^2+{\alpha_2}\bar{v} & -\xi\bar{v}\phi(\bar{w})(\frac{k\pi}{L})^2-\beta_2\bar{v}\\
\beta_{31}\bar{w} & \beta_{32}\bar{w} & 4d_3(\frac{k\pi}{L})^2+{\alpha_3}\bar{w}\\
\end{pmatrix}.
\end{align}
Note that $\mathcal{K}_2$ terms in (\ref{82}) consists of integrals (\ref{89})--(\ref{810}), (\ref{814}) and (\ref{818})--(\ref{819}).  Therefore, given all the system parameters, we will be able to evaluate $\mathcal{K}_2$ and determine the stability of $\Gamma_{k_0}(s)$ thanks to Theorem \ref{theorem43}.

\end{document}